\documentclass[reqno]{amsart} 
\usepackage{epstopdf}
\usepackage{textcomp}
\usepackage{amsmath, amsthm, amscd, amsfonts}
\usepackage{graphicx}
\usepackage{float}
\usepackage{mathrsfs}
\usepackage{bbm}



\usepackage{hyperref}
\hypersetup{colorlinks=true, citecolor=blue}
\usepackage{color}
\usepackage[usenames,dvipsnames,svgnames,table]{xcolor}

\linespread{1}

\newcommand{\mb}{\mathbb}

\newcommand{\mf}{\mathfrak}
\newcommand{\mc}{\mathcal}
\newcommand{\ms}{\mathscr}
\newcommand{\ov}{\overline}

\newcommand{\wt}{\widetilde}

\makeatletter 
\@addtoreset{equation}{section}
\makeatother  

\numberwithin{equation}{section}


\begin{document}

\numberwithin{equation}{section}

\title{Morse theory for Lagrange multipliers and adiabatic limits}

\author[Schecter]{Stephen Schecter}
\address{
Department of Mathematics \\
North Carolina State University \\
Box 8205 \\
Raleigh, NC 27695 USA \\
+1-919-515-6533
}
\email{schecter@ncsu.edu}

\author[Xu]{Guangbo Xu}
\address{
Department of Mathematics, University of California, Irvine, Irvine, CA 92697 USA.
}
\email{guangbox@math.uci.edu}

\date{May 19, 2014}




\newtheorem{thm}{Theorem}
\newtheorem*{thm-no}{Theorem}
\newtheorem*{cor-no}{Corollary}
\newtheorem{lem}[thm]{Lemma}
\newtheorem{cor}[thm]{Corollary}
\newtheorem{prop}[thm]{Proposition}

\theoremstyle{definition}
\newtheorem{defn}[thm]{Definition}
\theoremstyle{remark}
\newtheorem{rem}[thm]{Remark}
\newtheorem{hyp}[thm]{Hypothesis}

\begin{abstract}
Given two Morse functions $f, \mu$ on a compact manifold $M$, we study the Morse homology for the Lagrange multiplier function on $M \times {\mb R}$, which sends $(x, \eta)$ to $f(x) + \eta \mu(x)$. Take a product metric on $M \times {\mb R}$, and rescale its ${\mb R}$-component by a factor $\lambda^2$.  We show that generically, for large $\lambda$, the Morse-Smale-Witten chain complex is isomorphic to the one for $f$ and the metric restricted to ${\mu^{-1}(0)}$, with grading shifted by one.  On the other hand, in the limit $\lambda\to 0$, we obtain another chain complex, which is geometrically quite different but has the same homology as the singular homology of $\mu^{-1}(0)$. The isomorphism between the chain complexes is provided by the homotopy obtained by varying $\lambda$. Our proofs use both the implicit function theorem on Banach manifolds and geometric singular perturbation theory.

\noindent {\it Keywords:} Morse homology, geometric singular perturbation theory, exchange lemma, adiabatic limit
\end{abstract}

\maketitle

\setcounter{tocdepth}{1}

\tableofcontents

\section{Introduction\label{sec:intro}}

Let $M$ be a compact manifold. Suppose $f$ and $\mu$ are Morse functions on $M$, and $0$ is a regular value of $\mu$. 
Then we consider the Lagrange multiplier function
\begin{align*}
\begin{array}{cccc}
{\mc F}: &  M \times {\mathbb R}  & \to &  {\mathbb R},\\
        & (x, \eta)  &   \mapsto  &    f(x)+ \eta \mu(x).
        \end{array}
\end{align*}
The critical point set of ${\mc F}$ is
\begin{align}
{\rm Crit} ({\mc F})= \left\{ (x, \eta)\  |\ \mu(x)=0, d f (x) + \eta d \mu(x)=0   \right\},
\end{align}
and there is a bijection 
\begin{align*}
\begin{array}{ccc}
{\rm Crit} ({\mc F}) & \simeq & {\rm Crit} \left( f|_{\mu^{-1}(0)} \right),\\
(x,\eta) &  \mapsto & x.
\end{array}
\end{align*}
This is a topic which is taught in college calculus.

A deeper story is the Morse theory of ${\mc F}$. Take a Riemannian metric $g$ on $M$ and the standard Euclidean metric $e$ on ${\mb R}$. Denote by $g_1$ the product metric $g\oplus e$ on $M\times {\mb R}$. Then the gradient vector field of ${\mc F}$ with respect to $g_1$ 	is
\begin{align}
\nabla {\mc F}(x, \eta) = ( \nabla f + \eta \nabla \mu, \mu(x) ).
\end{align}The differential equation for the negative gradient flow of ${\mc F}$ is
\begin{align}
 x' & =  - \left( \nabla f (x)+ \eta \nabla \mu(x) \right),    \label{eqn15-1}    \\
 \eta' & =  - \mu(x). \label{eqn15-2}
\end{align}
Let $p_\pm= (x_\pm, \eta_\pm) \in {\rm Crit} ({\mc F})$, and let ${\mc M}(p_-, p_+)$ be  the space of orbits of the negative gradient flow that approach $p_\pm$ as $t\to \pm\infty$ respectively.   ${\mc M}(p_-, p_+)$ is called a {\em moduli space} of orbits. For generic choice of data $(f, \mu, g)$, this moduli space will be a smooth manifold with dimension
\begin{align}
{\rm dim}\,{\mc M}(p_-, p_+) = {\rm index} (p_-) - {\rm index} (p_+) -1.
\end{align}
When the dimension is zero, we can count the number of elements of the corresponding moduli space and use the counting to define the Morse-Smale-Witten complex.  Its homology is called the Morse homology of the pair $({\mc F},g\oplus e)$.

We shall show that if we rescale the metric on the ${\mb R}$-part, obtaining $g_\lambda: = g \oplus \lambda^{-2} e$, $\lambda\in {\mb R}^+$, then the chain complex is defined for $\lambda$ in a generic set $\Lambda^{reg}$, and for all such $\lambda$ we have isomorphic homology groups. The system (\ref{eqn15-1})--(\ref{eqn15-2}) is replaced by
\begin{align}
x' & =  - \left( \nabla f (x)+ \eta \nabla \mu(x) \right),  \label{eqn15a} \\
\eta' & =  -\lambda^2 \mu(x). \label{eqn15b}
\end{align}
We then consider the limits of the complex for $({\mc F}, g_\lambda)$ as $\lambda$ approaches  $\infty$ and zero. Orbits in the two limits will be completely different geometric objects, but the counting of them gives the same homology groups. 

The main results of this paper can be summarized as the following theorems. For precise meaning and necessary assumptions of these theorems, the reader is referred to Theorem \ref{thm31}, Theroem \ref{compactnessthm} and Theroem \ref{thm24}.
\begin{thm-no}
For $\lambda\in\Lambda^{reg}$ sufficiently large, the Morse-Smale-Witten complex ${\mc C}^\lambda$ of the pair $({\mc F}, g_\lambda)$  is canonically isomorphic to the Morse-Smale-Witten complex of the pair $(f|_{\mu^{-1}(0)}, g|_{\mu^{-1}(0)})$ with grading shifted by one.
\end{thm-no}

\begin{thm-no}
When $\lambda \to 0$, orbits of solutions to (\ref{eqn15a})-(\ref{eqn15b}) converges to certain singular orbits. The counting of isolated singular orbits defines a chain complex ${\mc C}^0$, which is isomorphic to ${\mc C}^\lambda$ for $\lambda\in\Lambda^{reg}$ sufficiently small.
\end{thm-no}

As a corollary, we have
\begin{cor-no} The homology of ${\mc C}^0$ is isomorhpic to the singular homology of $\mu^{-1}(0)$ with grading shifted by one.
\end{cor-no}

Our results give a new perspective on classical Morse theory. If we replace $\mu$ by $\mu-c$ for any regular value $c$ of $\mu$, then the homology of the hypersurface $\mu^{-1}(c)$ can be computed as the homology of the chain complex  ${\mc C}^0$ defined in this paper. The fast flow determined by \eqref{eqn15a}--\eqref{eqn15b} with $\lambda = 0$ is independent of $c$.  Only the slow flow changes as $c$ changes. This allows tracking of the change in homology of $\mu^{-1}(c)$ when $c$ crosses a critical value. However, we do not pursue this point in the present paper. 

Throughout this paper, we shall always assume that $M$, $f$, $\mu$ and $g$ are $C^\infty$ , or {\em smooth}.   A property will be called {\em generic} if  it is true on a countable intersection of open dense sets in a Fr\'{e}chet space of $C^\infty$ functions. 

All chain complexes and homology groups will have ${\mb Z}_2$ coefficients. It should be straightforward to extend our results to integer coefficients. In order to do so, one has to orient the manifold $M$ and various moduli spaces. The isolated orbits will then obtain orientations, and each will be counted with coefficient $1$ or $-1$ in various sums. One then has to keep track of the signs in the adiabatic limit processes. We use ${\mb Z}_2$ coefficients to avoid these complications.

We shall use the notation ${\mb R}_+=(0,\infty)$, ${\mb Z}_+=\{n\in {\mb Z} \ | \ n\ge0\}$, and we use ${\rm index}(p, f)$ to denote the Morse index for $p$ as a critical point of $f$.

\subsection{Outline\label{sec:out}}

 In Section \ref{section2}, we review basic facts about the Morse-Smale-Witten complex, and we construct this complex ${\mc C}^\lambda$ for the function ${\mc F}$ and the metric $g_\lambda$.  Throughout the paper we work with generic data $(f,\mu,g)$.

Let $p_-, \; p_+$ be two critical points of ${\mc F}$, and consider  orbits of (\ref{eqn15a})--(\ref{eqn15b})
from $p_-$ to $p_+$ for different  $\lambda$.  The energy of such an orbit is  ${\mc F}(p_-) - {\mc F}(p_+)$, which is  $\lambda$-independent. 

 For large $\lambda$, because of the finiteness of energy, one can show that all orbits from $p_-$ to $p_+$ will be confined {\it a priori} in a small neighborhood of $\mu^{-1}(0) \times {\mb R}$. In fact, those orbits will be close to the orbits of the negative gradient flow of $f|_{\mu^{-1}(0)}$. This gives the correspondence between the Morse-Smale-Witten chain complex for $({\mc F}, g_\lambda)$ for large $\lambda$, and the complex for $\left( f|_{\mu^{-1}(0)}, g|_{\mu^{-1}(0)} \right)$. In Section \ref{section_infinity} we give two proofs of these facts, one using Fredholm theory and one using geometric singular perturbation theory \cite{Jones_GSPT}. 

On the other hand, in the limit $\lambda \to 0$, one obtains singular orbits of a fast-slow system.  There is  one slow variable, $\eta$.  We will show that for ${\rm index}\, (p_-, {\mc F}) - {\rm index}\, (p_+, {\mc F})=1$,  if $\lambda>0$ is small enough, then near each singular orbit from $p_-$ to $p_+$, there exists a unique orbit of (\ref{eqn15a})--(\ref{eqn15b}) from $p_-$ to $p_+$. This gives a correspondence between the Morse-Smale-Witten chain complex of $({\mc F}, g_\lambda)$ for $\lambda$ small, and a complex ${\mc C}^0$ defined by counting  singular orbits. Details are in Sections \ref{section4} and \ref{section5}.


Our method of studying the $\lambda \to 0$ limit is similar to that used in \cite{Banyaga_Hurtubise}. However, in \cite{Banyaga_Hurtubise}, the slow manifolds are normally hyperbolic, so one can use the classical exchange lemma  \cite{Jones_GSPT} to relate singular orbits for $\lambda=0$ to true orbits for small $\lambda>0$.  In the current paper, on the other hand, one cannot avoid the existence of points on the slow manifold where normal hyperbolicity is lost. To overcome this difficulty one needs a small extension of the more general exchange lemma proved in \cite{Schecter_II}. A review of exchange lemmas including this extension is provided in \ref{app:gspt}.


\subsection{Motivation}


The motivation for our work comes from quantum physics and symplectic geometry. This subsection is independent of the rest of the paper.

To study quantum physics theories, mathematicians usually consider moduli spaces instead of path integrals.
A moduli space is the space of solutions to some nonlinear differential equation. For example, in Morse theory, which was interpreted by Witten \cite{Witten_Morse} as supersymmetric quantum mechanics, we consider the moduli space of gradient flow lines, which are solutions to the gradient flow equation. Moduli spaces are also used in symplectic geometry.  For example, to define the Gromov-Witten invariant of a symplectic manifold $(X, \omega)$, we consider the moduli space of pseudoholomorphic curves introduced in \cite{Gromov_1985}, which are solutions to a nonlinear Cauchy-Riemann equation on a Riemann surface $\Sigma$. These two examples are both ``conformally invariant,'' that is, the differential equations are independent of the size of the domain, which are the real line ${\mb R}$ in the first example and the Riemann surface $\Sigma$ in the second example. 

We can integrate certain cohomology classes over the moduli spaces. In quantum field theory these integrals are actually correlation functions, which are the most important computable quantities. There are algebraic structures on the set of such integrals. In Morse theory, the integrals can be interpreted as counting the isolated gradient flow lines; they define a chain complex, whose homology is the singular homology of the manifold.  In Gromov-Witten theory, the counting of pseudoholomorphic curves gives rise to the so-called quantum cohomology of the symplectic manifold $(X, \omega)$. 

Our two examples are related by the theory of Floer homology, which was introduced by Floer in 1980s. In Gromov-Witten theory, if the Riemann surface $\Sigma$ is the cylinder ${\mb R} \times S^1$, then we can use a Hamiltonian function $H: X \to {\mb R}$ to perturb the Cauchy-Riemann equation on $\Sigma$. More precisely, if $(t, s)$ is the standard coordinate on the cylinder and $J$ is an almost complex structure on $(X, \omega)$, then Floer's equation is the following PDE for maps $u: {\mb R}\times  S^1  \to X$:
\begin{align}\label{equation18}
{\partial u \over \partial t} + J \left( {\partial u \over \partial s} \right)  + \nabla H (u)  = 0.
\end{align}
This can be viewed as the negative gradient flow equation of a certain function on the loop space of $X$. Moreover, if we shrink loops to points, then solutions should be $s$-independent, so \eqref{equation18} reduces to an equation for maps $u: {\mb R} \to X$:
\begin{align}
{du \over dt} + \nabla H (u) = 0.
\end{align}
This is  the equation considered in the Morse theory. In this sense, in symplectic geometry, Morse theory is often considered to be a finite-dimensional model for the theory of pseudoholomorphic curves.

There are quantum physics theories that are not conformally invariant. Such theories depend on one or more scale parameters. An example is the gauged $\sigma$-model. The corresponding PDE is the symplectic vortex equation, introduced by Mundet in his thesis \cite{Mundet_thesis} and by Cieliebak-Gaio-Salamon \cite{Cieliebak_Gaio_Salamon_2000}. More precisely, suppose we have a symplectic manifold $(\wt{X}, \wt\omega)$ and a Hamiltonian $G$-action, where $G$ is a compact Lie group, with  moment map $\mu: \wt{X} \to {\mf g}$. (This is the motivation for using $\mu$ to denote one of the two functions at the beginning of this paper.) Then the symplectic vortex equation is an elliptic system on a Riemann surface $\Sigma$ associated with the triple $( \wt{X}, \wt\omega, \mu)$. This system depends on a scale parameter $\lambda>0$, which determines the size of $\Sigma$. 


One can use a $G$-invariant Hamiltonian function $\wt{H}: \wt{X}  \to {\mb R}$ to perturb the symplectic vortex equation on the cylinder ${\mb R}\times S^1$ and study the corresponding Floer homology theory. This was proposed in \cite{Cieliebak_Gaio_Salamon_2000} and has been studied by the second named author \cite{Xu_VHF}. In this case, the equation is for maps $(u, \Psi): {\mb R}\times S^1 \to \wt{X}\times {\mf g}$, and reads
\begin{align}
\label{equation110} {\partial u \over \partial t} + J \left( {\partial u \over \partial s} \right) + \nabla \left(  \wt{H} + \langle \mu, \Psi\rangle \right) & = 0,\\
\label{equation111} {\partial \Psi \over \partial t} + \lambda^2 \mu(u) & = 0.
\end{align}
Here $\langle \cdot, \cdot \rangle$ is the inner product on the Lie algebra ${\mf g}$, and $\lambda>0$ is the scale parameter.

As shown in \cite{Gaio_Salamon_2005}, the adiabatic limit of the symplectic vortex equation as $\lambda \to \infty$ is nearly the same as the nonlinear Cauchy-Riemann equation that arises in the Gromov-Witten theory of the symplectic quotient $X:= \mu^{-1}(0)/ G$. In the present case, the $\lambda \to \infty$ limit of (\ref{equation110}), (\ref{equation111}) is related to (\ref{equation18}) on the symplectic quotient $X$. 

A natural question arises: what can be said about the opposite adiabatic limit  $\lambda \to 0$? This limit could lead to a different perspective on the quantum cohomology of the symplectic quotient. Moreover, the $\lambda \to 0$ limit is related to the following argument of Witten in \cite{Witten_Verlinde}. The complex Grassmannian $Gr(k, n)$ (the space of $k$-planes in ${\mb C}^n$) can be viewed as the symplectic quotient of the Euclidean space ${\mb C}^{nk}$ with respect to a Hamiltonian $U(k)$-action. Thus as above, we could use the symplectic vortex equation in ${\mb C}^{nk}$ and its $\lambda \to \infty$ adiabatic limit to study the quantum cohomology of $Gr(k, n)$. Witten argued nonrigorously, using path integrals, that the opposite adiabatic limit leads to the Verlinde algebra, discovered by E. Verlinde in \cite{Verlinde_1988}, which is therefore related to the quantum cohomology of $Gr(k, n)$. This argument explains earlier work by Gepner \cite{Gepner}, Vafa \cite{Vafa_92}, and Intriligator \cite{Intriligator}.  

A mathematical proof of the isomorphism between the quantum cohomology and the Verlinde algebra was given by Agnihotri in his thesis \cite{Agnihotri_thesis} by directly calculating the two objects. However, geometers would prefer a proof closer to Witten's argument, using the symplectic vortex equation with varying $\lambda$ and its adiabatic limits. Such a theory should apply not just to Grassmannians but to general symplectic quotients. The difficulty is that, unlike the $\lambda \to \infty$ limit, the $\lambda \to 0$ limit of the symplectic vortex equation is little understood. Only a few isolated results have been obtained, such as \cite{Gonzalez_woodward_small}.
  
  Because of this lack of understanding, it is natural to symplectic geometers to consider a Morse theory model. More precisely, if a solution $(u, \Psi)$ to (\ref{equation110})-(\ref{equation111}) is independent of $s$, then it satisfies the ODE
\begin{align}
\label{equation112} u'(t) + \nabla \left( \wt{H} + \langle \mu, \Psi \rangle \right) & = 0,\\
\label{equation113} \Psi'(t) + \lambda^2 \mu(x) & = 0.
\end{align}
We can further simplify the equation by removing the group action and replacing the moment map $\mu: \wt{X} \to {\mf g}$ by a smooth function $\mu: \wt{X} \to {\mb R}$.\footnote{The second named author would like to thank Urs Frauenfelder for suggesting removal of the group action.} Then (\ref{equation112})-(\ref{equation113}) becomes our equation (\ref{eqn15-1})-(\ref{eqn15-2}), and the symplectic quotient becomes the hypersurface $\mu^{-1}(0)$.  

It may be possible to generalize our result to other situations. For example, we can consider a vector-valued function $\mu: X \to {\mb R}^k$ instead of a scalar-valued one. A similar approach might also apply to Picard-Lefschetz theory, the holomorphic analogue of Morse theory. The case of equation (\ref{equation110})--(\ref{equation111}) can perhaps be attacked in a similar way, presumably using an infinite-dimensional version of the exchange lemma and more-involved functional analytic techniques.

\section{Morse homology and Morse-Smale-Witten complex of $({\mc F}, g_\lambda)$}\label{section2}

\subsection{Morse homology\label{sec:mh}}

For the topological aspects of Morse theory, there is the classical book \cite{Milnor_Morse}. Here we will adopt the viewpoint of Witten \cite{Witten_Morse}, which is by now standard.

Let $M$ be a smooth manifold without boundary. A smooth function $f: M \to {\mb R}$ is a {\em Morse function} if its differential $df$ is a transverse section of the cotangent bundle $T^*M$. Equivalently, at each critical point $p$ of $f$, the  second derivative $D^2f(p)$ is nondegenerate.  By the Morse lemma, near each $p\in {\rm Crit} (f)$, there exists a local coordinate chart $(x_1, \ldots, x_n)$ such that 
\begin{align*}
 f(x)= f(p) -\sum_{i=1}^{k_p} x_i^2 + \sum_{i=k_p+1}^n x_i^2.
\end{align*}
The integer $k_p$ is called the {\em Morse index} of the critical point $p$. We write
\begin{align*}
 {\rm index}\, (p, f) = k_p\in {\mb Z}_+,
\end{align*}
and we denote the set of critical points of $f$ of index $k$ by ${\rm Crit}_{k} (f)$ 

For any complete Riemannian metric $g$ on $M$, the gradient vector field $\nabla f$ is the dual of the 1-form $df$. We denote by $\phi_t: M \to M$ the flow of $-\nabla f$. For each $p\in {\rm Crit} (f)$,    the unstable and stable manifolds are defined by
\begin{align*}
W^u(p) = \left\{ x\in M | \lim_{t\to -\infty} \phi_t(x)= p  \right\}, \quad  W^s(p) = \left\{ x\in M| \lim_{t\to +\infty} \phi_t(x)= p \right\}
\end{align*}
and
\begin{align*}
 {\rm dim}\, W^u(p) = {\rm index}\,  (p, f),\ {\rm dim}\, W^s(p)= n - {\rm index}\, (p, f).
\end{align*}
The pair $(f, g)$ is called {\em Morse-Smale} if for any two critical points $p, q$, $W^u(p)$ and $W^s(q)$ intersect transversally in $M$. This condition condition holds for generic  $(f, g)$ \cite{Schwarz_book}. 

Associated to a Morse-Smale pair $(f, g)$ is its {\em Morse-Smale-Witten chain complex} ${\mc C}(f, g)$
of ${\mb Z}_2$-modules.  The module of $k$-chains is generated by the critical points of index $k$.  The boundary of a critical point of index $k$ is a linear combination of critical points of index $k-1$, where the coefficient of a critical point $q$ is the number of orbits of the negative gradient flow from $p$ to $q$.  We write
\begin{align}
 {\mc C}(f, g)= \left( C_*, \partial \right),\ C_k= \bigoplus_{p \in {\rm Crit}_k (f)} {\mb Z}_2 \langle p\rangle,
\end{align}
where $\partial: C_k\to C_{k-1}$ is given by
\begin{align}\label{22}	
 \partial p = \sum_{q\in {\rm Crit}_{k-1} (f)} n_{p, q} \cdot q,\ n_{p, q} = \# \left[(W^u(p)\cap W^s(q))/ {\mb R} \right] \in {\mb Z}_2.
\end{align}
One can show that $\# \left[(W^u(p)\cap W^s(q))/{\mb R} \right]$ is finite and $\partial\circ \partial =0$ (under the Palais-Smale condition on $f$, see \cite{Schwarz_book}), hence ${\mc C}(f, g)$ is a chain complex. Its homology  is called the {\em Morse homology} of $(f, g)$ with ${\mb Z}_2$-coefficients, denoted $H(f, g; {\mb Z}_2)$. If $M$ is compact, this homology is independent of the choice of Morse-Smale pair $(f, g)$ and is isomorphic to the singular homology of $M$. 

\subsection{Morse homology in the analytic setting\label{sec:anal}}

The book \cite{Schwarz_book} give a comprehensive treatment of the analytical perspective on finite-dimensional Morse theory. Here we give a brief review. 

Let $p$ and $q$ be two critical points of $f$, and consider the nonlinear ODE
\begin{align}\label{eqn23}
x' = - \nabla f(x),
\end{align}
with the boundary conditions  
\begin{align}\label{eqn23bc}
\lim_{t\to -\infty} x(t) = p,  \quad \lim_{t\to +\infty} x(t) = q. 
\end{align}
The space of solutions to this boundary value problem 
can be identified with $W^u(p)\cap W^s(q)$ by identifying the solution $x(t)$ with the point $x(0)$.

Indeed, the space of solutions to such a boundary value problem can be viewed as a finite-dimensional submanifold of an infinite-dimensional Banach manifold. We consider the Banach manifold ${\mc B}$ of those $W^{1, 2}_{loc}$ maps $x: {\mb R}\to M$ for which there exists $T>0$ such that for $t\in [T, +\infty)$ (respectively $t\in (-\infty, -T]$), $d(x(t), q)$ (respectively $d(x(t), p)$) is less than the injectivity radius of $(M, g)$ and $\exp_q^{-1}(x(t)) \in W^{1,2}\left( [T, +\infty), T_qM \right)$ (respectively $\exp_p^{-1}(x(t)) \in W^{1, 2}\left( (-\infty, -T], T_p M \right)$). There is a Banach space bundle ${\mc E}\to {\mc B}$ whose fibre over $x: {\mb R}\to M$ is $L^2\left( {\mb R}, x^*TM \right)$. There is a smooth section
\begin{align*}
\begin{array}{cccc}
{\mc S}: & {\mc B} & \to &  {\mc E}\\
         &  x & \mapsto  & x' + \nabla f(x)\end{array}
\end{align*}
whose zero locus is exactly $\wt{\mc M}(p, q)\simeq W^u(p) \cap W^s(q)$, the moduli space of solutions to (\ref{eqn23})--(\ref{eqn23bc}). ${\mc S}$ is a Fredholm section, which means that the linearization of ${\mc S}$ at each $x\in {\mc S}^{-1}(0)$,  given by
\begin{align*}
\begin{array}{cccc}
D{\mc S}_x: & T_x {\mc B} & \to & {\mc E}_x\\
           & V & \mapsto & \nabla_t V + \nabla_V (\nabla f) 
          \end{array}
\end{align*}
is a Fredholm operator.  The Fredholm index is ${\rm index}\, (p) - {\rm index}\, (q)$.

$\wt{\mc M}(p, q)$ is a smooth manifold if  the linearization of ${\mc S}$ is surjective along ${\mc S}^{-1}(0)$. 
In this case, we say that $\wt{\mc M}(p, q)= {\mc S}^{-1}(0)$ is {\em transverse}. This condition is equivalent to transversality of $W^u(p)$ and $W^s(q)$.

$\wt{\mc M}(p, q)$  has a free ${\mb R}$-action by time translation, so we can define ${\mc M}(p, q) = \wt{\mc M}(p, q)/ {\mb R}$, the moduli space of orbits from $p$ to $q$.  A useful assumption is the Palais-Smale condition on $f$ (which is automatic when $M$ is compact): any sequence $x_i \in M$ for which $f(x_i)$ is uniformly bounded and $\left| \nabla f(x_i) \right| \to 0$ has a convergent subsequence. Under the transversality and Palais-Smale assumptions,  if ${\rm index} (q, f) = {\rm index}\,  (p, f) -1$, then ${\mc M}(p, q)$ is finite.  Moreover, if ${\rm index}\, (q, f) = {\rm index}\, (p, f) -2$, then ${\mc M}(p, q)$ can be compactified to become a smooth one-dimensional manifold with boundary; the boundary points correspond to the broken orbits from $p$ to $q$ that pass through another critical point $r$ with ${\rm index}\, (r, f) = {\rm index}\, (p, f) -1$. These two facts imply that the boundary operator (\ref{22}) is well-defined and $\partial\circ \partial = 0$.

A solution of \eqref{eqn23}--\eqref{eqn23bc} has {\em energy}
\begin{align} \label{energydef}
E(x) =  \int_{\mb R} \|x^\prime(t) \|^2 dt  =
- \int_{\mb R} \left\langle \nabla f( x(t)),x^\prime(t)\right\rangle dt =
f(p) - f(q),
\end{align}
which only depends on the values of $f$ at $p$ and $q$. The energy is essential to many useful estimates.

\subsection{The Morse-Smale-Witten complex of $({\mc F}, g_\lambda)$\label{sec:msw}}

We shall assume
\begin{enumerate}
\item[(A1)] $M$ is a compact manifold with metric $g$.
\end{enumerate}

We shall work with generic triples $(f,\mu,g)$ defined on $M$.  For the sake of precision, we shall list our generic assumptions, and denote them (A$k$), $k\ge2$.  In each of our results, we shall assume without comment that all assumptions (A$k$) made up to that point hold.  The assumptions (A$k$) are all independent.

We assume:
\begin{enumerate}
\item[(A2)] $f$ and $\mu$ are Morse functions on $M$.
\item[(A3)] $0$ is a regular value of $\mu$ and $f|_{\mu^{-1}(0)}$ is Morse.
\end{enumerate}
We shall regard the metric $g$  and the Morse function $\mu$ having $0$ as a regular value as arbitrary and fixed.  Then all our assumptions (A$k$), $k\ge2$, are true for generic $f$. Alternatively, we could fix the Morse function $f$; the assumptions are then true for generic $(\mu,g)$.

\begin{lem}\label{lemma21}\cite[Proposition A.2]{Frauenfelder_vortex}
The Lagrange multiplier ${\mc F}$ is a Morse function on $M \times {\mb R}$ and for any $p= (x_p, \eta_p) \in {\rm Crit}({\mc F})$, ${\rm index}(p, {\mc F}) = {\rm index}(x_p, f|_{\mu^{-1}(0)}) + 1$. 
\end{lem}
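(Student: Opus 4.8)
The plan is to compute the Hessian of $\mc F$ directly at a critical point and then read off both its nondegeneracy and its signature from the classical linear algebra of bordered quadratic forms (the ``bordered Hessian'' of constrained optimization). Fix $p=(x_p,\eta_p)\in{\rm Crit}(\mc F)$, so that $\mu(x_p)=0$ and $df(x_p)+\eta_p\, d\mu(x_p)=0$; since $0$ is a regular value of $\mu$ we have $d\mu(x_p)\ne0$. Working in a coordinate chart $(x_1,\dots,x_n)$ centered at $x_p$, with $\eta$ as the last coordinate on $M\times\mb R$, one computes $\partial_x\mc F=df(x)+\eta\, d\mu(x)$ and $\partial_\eta\mc F=\mu(x)$, so the Hessian of $\mc F$ at $p$ is the $(n+1)\times(n+1)$ symmetric matrix
\begin{align*}
H=\begin{pmatrix} A & b \\ b^{T} & 0 \end{pmatrix},\qquad A=D^{2}f(x_p)+\eta_p\, D^{2}\mu(x_p),\qquad b=\nabla\mu(x_p)\ne0.
\end{align*}
Everything then reduces to understanding the signature of this bordered matrix.

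The first step is to identify the restriction of the block $A$ to the hyperplane $W:=\ker d\mu(x_p)=T_{x_p}\mu^{-1}(0)$ with the intrinsic Hessian of $f|_{\mu^{-1}(0)}$ at $x_p$. I would parametrize $\mu^{-1}(0)$ near $x_p$ by an embedding $\gamma$ with $\gamma(0)=x_p$, differentiate the identity $\mu\circ\gamma\equiv0$ twice to obtain $d\mu(x_p)\bigl[D^{2}\gamma(0)[v,w]\bigr]=-\,D^{2}\mu(x_p)\bigl[d\gamma(0)v,\,d\gamma(0)w\bigr]$, and substitute this, together with $df(x_p)=-\eta_p\, d\mu(x_p)$, into the second derivative of $f\circ\gamma$ at $0$. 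The terms involving $D^{2}\gamma$ then cancel, and one finds that the Hessian of $f|_{\mu^{-1}(0)}$ at $x_p$ is precisely the quadratic form $v\mapsto v^{T}Av$ restricted to $W$. By assumption (A3) this restriction is nondegenerate; write $k$ for its Morse index.

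The second step is the bordered-form linear algebra. Choose $e\in\mb R^{n}$ with $b^{T}e=1$ (for instance $e=b/|b|^{2}$), so that $\mb R^{n}=W\oplus\mb R e$. Writing $v=w+se$ with $w\in W$ and $s\in\mb R$, and using $b^{T}v=s$, the quadratic form associated with $H$ becomes $w^{T}Aw+2s\bigl(e^{T}Aw+\tfrac12 s\, e^{T}Ae+\eta\bigr)$. The substitution $\zeta:=e^{T}Aw+\tfrac12 s\, e^{T}Ae+\eta$ is an invertible linear change of coordinates that fixes $w$ and $s$, and in the new coordinates the form reads $w^{T}Aw+2s\zeta$ on $W\oplus\mb R^{2}_{(s,\zeta)}$. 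Since $2s\zeta$ is nondegenerate of signature $(1,1)$, the matrix $H$ is nondegenerate if and only if $A|_{W}$ is, and in that case ${\rm index}(H)={\rm index}(A|_{W})+1=k+1$. Combining with the first step, $\mc F$ is Morse and ${\rm index}(p,\mc F)={\rm index}(x_p,f|_{\mu^{-1}(0)})+1$, which is the claim.

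There is no deep obstacle here --- the whole assertion is a finite-dimensional linear-algebra computation --- but the one place that needs care is the bookkeeping in the second step: one must verify that the change of variables is genuinely invertible, and, more importantly, that the quadratic form that survives is exactly the restriction $A|_{W}$ rather than some unrelated form, since $A=D^{2}f(x_p)+\eta_p D^{2}\mu(x_p)$ and not $D^{2}f(x_p)$. This is precisely why the identification in the first step --- relating $A|_{W}$ to the intrinsic Hessian of $f|_{\mu^{-1}(0)}$ --- is genuinely needed and not merely cosmetic.
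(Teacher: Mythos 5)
Your proof is correct; every step checks out. The paper, however, does not give its own argument for this lemma --- it is quoted verbatim from Frauenfelder (Proposition A.2 of the cited reference) --- so there is no in-paper proof to compare against. That said, the bordered-Hessian argument you give is the standard and almost certainly the intended one. The two essential points, which you get right, are: (i) the identification of $A|_{W}=\left(D^{2}f(x_p)+\eta_p D^{2}\mu(x_p)\right)|_{\ker d\mu(x_p)}$ with the intrinsic Hessian of $f|_{\mu^{-1}(0)}$ (the cancellation of the $D^{2}\gamma$ terms uses exactly the critical-point relation $df(x_p)=-\eta_p\,d\mu(x_p)$, and this is where $\eta_p$ enters nontrivially); and (ii) the completion-of-the-square producing the hyperbolic block $2s\zeta$ of signature $(1,1)$, which supplies the ``$+1$'' in the index formula and shows nondegeneracy of the full Hessian is equivalent to nondegeneracy of $A|_{W}$, i.e.\ to assumption (A3). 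The only cosmetic remark: invoking Sylvester's law of inertia explicitly would make it immediate that your non-orthogonal linear change of variables $(w,s,\eta)\mapsto(w,s,\zeta)$ preserves the index, though this is implicit in what you wrote.
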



Now we consider the unstable and stable manifolds of critical points of ${\mc F}$. Since we are dealing with the family of metrics $g_\lambda$, we want transversality of unstable and stable manifolds not just for a single $g_\lambda$, but as much as possible for the family.  Let $\phi_t^\lambda$ denote the flow of (\ref{eqn15a})--(\ref{eqn15b}) for the given value of $\lambda$, let  $p_-, p_+\in {\rm Crit} ({\mc F})$, and let $I$ be an interval in $\lambda$-space.   Define the sets
\begin{align*}
W^u(p_-,\lambda) &= \left\{ p \in M\times {\mb R} \; | \lim_{t\to -\infty} \phi^\lambda_t(p)= p_-  \right\}, \\  
W^s(p_+,\lambda) &= \left\{ p \in (M\times {\mb R} \; | \lim_{t\to +\infty} \phi^\lambda_t(p)= p_+ \right\}, \\
W^u(p_-,I ) &= \left\{ (p,\lambda) \in (M\times {\mb R})  \times I \; | \; p \in  W^u(p_-,\lambda) \right\}, \\  
W^s(p_+,I) &= \left\{ (p,\lambda) \in (M\times {\mb R}) \times I \; | \; p \in  W^s(p_+,\lambda) \right\};
\end{align*}
$$ \wt{\mc M}^I(p_-, p_+) = W^u(p_-, I) \cap W^s(p_+, I),\ {\mc M}^I(p_-, p_+) = \wt{\mc M}^I(p_-, p_+)/ {\mb R}.$$
We assume:
\begin{enumerate}
\item [(A4)] For all $p_-, p_+\in {\rm Crit} ({\mc F})$,  $W^u(p_-,{\mb R}_+ )$ and  $W^s(p_+,{\mb R}_+)$ are transverse. 
\end{enumerate}
Assumption (A4) implies that for all but discretely many $\lambda \in {\mb R}_+$,  $W^u(p_-,\lambda )$ and  $W^s(p_+,\lambda)$ are transverse. We denote by $\Lambda^{reg} \subset {\mb R}_+$ the subset of $\lambda$'s for which $W^u(p_-,\lambda )$ and  $W^s(p_+,\lambda)$ are transverse for all $p_-, p_+\in {\rm Crit} ({\mc F})$.   

\begin{lem}
\label{lem:ps}
For each $\lambda\in {\mb R}_+$, the function ${\mc F}$ satisfies the Palais-Smale condition with respect to the metric $g_\lambda$, i.e., for any sequence $\wt{x}_i \in M \times {\mb R}$ such that ${\mc F}(\wt{x}_i)$ is bounded and $\left| \nabla {\mc F} (\wt{x}_i) \right|_{g_\lambda} \to 0$, 
there exists a convergent subsequence of $\wt{x}_i$.
\end{lem}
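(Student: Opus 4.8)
The plan is to show that a putative non-convergent sequence $\wt x_i = (x_i, \eta_i) \in M \times \mathbb{R}$ with $\mathcal{F}(\wt x_i)$ bounded and $|\nabla\mathcal{F}(\wt x_i)|_{g_\lambda} \to 0$ leads to a contradiction. Since $M$ is compact by (A1), the only way $\wt x_i$ can fail to have a convergent subsequence is if $|\eta_i| \to \infty$ along a subsequence; so after passing to a subsequence we may assume $x_i \to x_\infty \in M$ and $|\eta_i| \to \infty$. First I would write out the gradient of $\mathcal{F}$ with respect to $g_\lambda = g \oplus \lambda^{-2} e$: from $\mathcal{F}(x,\eta) = f(x) + \eta\mu(x)$ one gets
\begin{align*}
\nabla_{g_\lambda}\mathcal{F}(x,\eta) = \bigl(\nabla f(x) + \eta\nabla\mu(x),\ \lambda^2\mu(x)\bigr),
\end{align*}
and the $g_\lambda$-norm squared is
\begin{align*}
\left|\nabla_{g_\lambda}\mathcal{F}(x,\eta)\right|_{g_\lambda}^2 = \left|\nabla f(x) + \eta\nabla\mu(x)\right|_g^2 + \lambda^2\,\mu(x)^2.
\end{align*}

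The hypothesis forces both terms to tend to zero. From $\lambda^2\mu(x_i)^2 \to 0$ and $\lambda$ fixed we get $\mu(x_i) \to 0$, hence $\mu(x_\infty) = 0$. From $|\nabla f(x_i) + \eta_i\nabla\mu(x_i)|_g \to 0$, dividing by $|\eta_i| \to \infty$ gives $\nabla\mu(x_i) \to 0$, so $\nabla\mu(x_\infty) = 0$. But then $x_\infty$ is a critical point of $\mu$ lying on $\mu^{-1}(0)$, which contradicts assumption (A3) that $0$ is a regular value of $\mu$. This is the heart of the argument: the rescaling by $\lambda^2$ is harmless because $\lambda$ is a fixed positive constant, and the unboundedness of the non-compact $\mathbb{R}$-direction is precisely what is defeated by $0$ being a regular value of $\mu$.

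I would then address the remaining case where $\eta_i$ stays bounded: passing to a further subsequence, $\eta_i \to \eta_\infty$, and then $\wt x_i \to (x_\infty, \eta_\infty)$ already converges, so there is nothing to prove. (One may note $|\nabla_{g_\lambda}\mathcal{F}|_{g_\lambda} \to 0$ then just says $(x_\infty,\eta_\infty) \in \mathrm{Crit}(\mathcal{F})$, but this is not needed for Palais--Smale.) Thus the only genuine case is $|\eta_i| \to \infty$, handled above.

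The argument involves no real obstacle; the only point requiring a moment's care is that one must divide the vanishing of $\nabla f(x_i) + \eta_i\nabla\mu(x_i)$ by $\eta_i$ rather than trying to use it directly, since $\nabla f$ alone carries no sign information — the unbounded coefficient $\eta_i$ is what lets us extract $\nabla\mu(x_\infty) = 0$. Everything else is a routine compactness-plus-continuity argument exploiting (A1) and (A3).
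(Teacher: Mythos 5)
Your proof is correct and takes essentially the same approach as the paper: reduce by compactness of $M$ to the case $|\eta_i|\to\infty$, use $\mu(x_i)\to 0$ together with the fact that $0$ is a regular value of $\mu$ to force the $M$-component $\nabla f(x_i)+\eta_i\nabla\mu(x_i)$ of the gradient to be large, a contradiction. The only small difference worth noting is that you extract $\mu(x_i)\to 0$ from the $\eta$-component of the gradient (which is $\lambda^2\mu(x_i)$), so you in fact never use the hypothesis that $\mathcal{F}(\wt x_i)$ is bounded, whereas the paper derives $\mu(x_i)\to 0$ from that boundedness.
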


\begin{proof}
Suppose $\wt{x}_i = (x_i, \eta_i)$.  Since $M$ is compact, we may assume that $\eta_i \to +\infty$ or $-\infty$. The condition that ${\mc F}(\wt{x}_i)$ is bounded implies that $\mu(x_i) \to 0$; but since $0$ is a regular value of $\mu$, $\nabla {\mc F}(\wt{x}_i) = \nabla f(x_i) + \eta_i \nabla \mu(x_i)$ cannot have arbitrary small norm.
\end{proof}

Now, for any $\lambda \in \Lambda^{reg}$, we can define the associated Morse-Smale-Witten complex of $\left( {\mc F}, g_\lambda \right)$, which is denoted by ${\mc C}^\lambda= {\mc C}( {\mc F}, g_\lambda)$. Its homology is denoted by $H^\lambda$. All ${\mc C}^\lambda$ share the same generators and gradings, but the boundary operator $\partial^\lambda$ may change when $\lambda$ crosses a value in ${\mb R}_+\setminus \Lambda^{reg}$. 

In general, the Morse homology of a pair $(f, g)$ is not independent of $(f,g)$ if the underlying manifold is noncompact; see for example \cite{kang_invariance}. 
Despite the fact that $M\times {\mb R}$ is noncompact, we will show that $H^\lambda$ is independent of  $\lambda$ for $\lambda \in \Lambda^{reg}$.

\begin{lem}\label{lemma23} For any $L>0$, there exists $K_L>0$ such that for any $p_\pm \in {\rm Crit}({\mc F})$, $W^u(p_-, (0, L]) \cap W^s(p_+, (0, L]) \subset \left( M \times [-K_L, K_L] \right) \times [0, L]$.
\end{lem}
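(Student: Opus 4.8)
The plan is to extract, from the two $\lambda$-uniform a priori bounds available on any connecting orbit — the energy bound and the bound on $\mc F$ itself — a pointwise bound on the $\eta$-coordinate that is uniform in $\lambda\in(0,L]$, using the regular-value hypothesis on $\mu$ in an essential way. First I would record the two bounds. If $(x(t),\eta(t))$ solves (\ref{eqn15a})--(\ref{eqn15b}) for some $\lambda\in(0,L]$ with $x(t)\to x_\pm$, $\eta(t)\to\eta_\pm$ as $t\to\pm\infty$, then $\tfrac{d}{dt}{\mc F}(x,\eta) = -\|x'\|_g^2 - \lambda^{-2}|\eta'|^2$, so, dropping the nonnegative $\eta'$-term,
\begin{align*}
\int_{\mb R} \|x'(t)\|_g^2\, dt \ \le\ {\mc F}(p_-) - {\mc F}(p_+) \ \le\ C_0 := \max_{p,q\in {\rm Crit}({\mc F})} \big( {\mc F}(p) - {\mc F}(q)\big),
\end{align*}
which is finite because ${\rm Crit}({\mc F})$ is in bijection with the finite set ${\rm Crit}(f|_{\mu^{-1}(0)})$ (Lemma~\ref{lemma21}). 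Moreover ${\mc F}$ is nonincreasing along the orbit, so $|f(x(t))+\eta(t)\mu(x(t))|\le C_1 := \max_{p\in {\rm Crit}({\mc F})}|{\mc F}(p)|$, and with $C_2 := \max_M|f|$ this yields the pointwise estimate $|\eta(t)|\cdot|\mu(x(t))| \le C_1 + C_2 =: C_3$ for all $t$.

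Next comes the geometric input. Since $0$ is a regular value of $\mu$ and $M$ is compact, there are $\delta,c>0$ with $|\nabla\mu|\ge c$ on $\{|\mu|\le\delta\}$. Combined with the pointwise estimate, this gives: whenever $|\eta(t)|\ge C_3/\delta$ one has $x(t)\in\{|\mu|\le\delta\}$, hence
\begin{align*}
\|x'(t)\|_g = \|\nabla f(x(t)) + \eta(t)\nabla\mu(x(t))\|_g \ \ge\ |\eta(t)|\,c - C_4, \qquad C_4 := \max_M\|\nabla f\|_g.
\end{align*}
Fixing $R_0 := \max\big( C_3/\delta,\ 2C_4/c,\ \max_{p\in{\rm Crit}({\mc F})}|\eta_p| + 1\big)$, on the set $\mc T := \{t: |\eta(t)|\ge R_0\}$ we get $\|x'(t)\|_g \ge \tfrac{c}{2}|\eta(t)| \ge \tfrac{c}{2}R_0$, so the energy bound forces ${\rm meas}(\mc T) \le 4C_0/(c^2R_0^2) =: T_0$, and on $\mc T$ we also have $|\eta'| = \lambda^2|\mu(x)| \le L^2 C_3/R_0 =: B$.

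Finally I would run an elementary "it cannot climb far" argument. If $\sup_t|\eta(t)|>R_0$, say attained at $t_*$ with $\eta(t_*)>0$ (the negative case is symmetric), then since $|\eta(\pm\infty)| = |\eta_\pm| < R_0$ there is a last time $t_1<t_*$ with $\eta(t_1)=R_0$; on $[t_1,t_*]$ we have $\eta\ge R_0$, so $[t_1,t_*]\subset\mc T$, hence $t_*-t_1\le T_0$ and $\eta(t_*) = R_0 + \int_{t_1}^{t_*}\eta'\,dt \le R_0 + BT_0$. Therefore $|\eta(t)|\le R_0+BT_0 =: K_L$ for all $t$, where $K_L$ depends only on $L$ and the fixed data $(f,\mu,g)$. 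In particular this holds at the base point $(x,\eta)$ of any $(x,\eta,\lambda)\in W^u(p_-,(0,L])\cap W^s(p_+,(0,L])$, which is exactly the asserted containment (the condition $\lambda\in[0,L]$ being automatic).

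The step I expect to require the most care is the passage from the integral bounds to a pointwise bound on $\eta$ — precisely the claim that the total time an orbit can spend at large $|\eta|$ is bounded uniformly in $\lambda\le L$. This is where the regular-value hypothesis enters, through the lower bound on $|\nabla\mu|$ near $\mu^{-1}(0)$, and where one genuinely uses the coupled structure of (\ref{eqn15a})--(\ref{eqn15b}): large $\eta$ makes $x$ move fast, which by finiteness of energy cannot persist. The remaining estimates are routine bookkeeping of $\lambda$-independent constants.
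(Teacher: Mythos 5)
Your proof is correct, and it rests on the same three ingredients as the paper's: the $\lambda$-uniform energy bound $\int\|x'\|^2\le{\mc F}(p_-)-{\mc F}(p_+)$, the monotone decrease of ${\mc F}$ giving a pointwise bound on $|\eta\,\mu(x)|$, and the regular-value hypothesis giving a lower bound on $\|\nabla\mu\|$ near $\mu^{-1}(0)$, from which $\|x'\|\gtrsim c\,|\eta|$ once $|\eta|$ is large. What differs is the packaging. The paper argues by contradiction and sequential compactness: assume $|\eta_i|\to\infty$, use the uniform bound on $|\eta'|$ (from $\lambda_i\le L$ and compactness of $M$) to keep $|\wt\eta_i(t)|$ large on a fixed window $[-R,R]$, and then show the energy on that window alone blows up. You instead argue directly: the energy bound caps the measure of the time set where $|\eta|\ge R_0$, and on that set the $\eta$-coordinate can only creep at rate $O(\lambda^2)$, so the total excursion is bounded, which yields an explicit constant $K_L=R_0+BT_0$. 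The two routes buy slightly different things: the paper's is shorter and softer (no bookkeeping of constants), while yours is constructive, produces an explicit $K_L$, and makes visible exactly how the bound degenerates as $L\to\infty$. Both are valid proofs of the lemma; there is no gap in your argument.
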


\begin{proof}
If this statement is false, then there exists $p_-, p_+\in {\rm Crit}({\mc F})$, a sequence $\lambda_i$ converging to $\lambda_\infty \in [0, L]$ and $(x_i, \eta_i) \in W^u(p_-, \lambda_i) \cap W^s(p_+, \lambda_i)$ such that $\lim_{i \to \infty} |\eta_i| = + \infty$. Let $\wt{p}_i(t)=(\wt x_i(t),\wt \eta_i(t))$
be the solution of (\ref{eqn15a})--(\ref{eqn15b}) for $\lambda = \lambda_i$ with $\wt{p}_i(0)=(x_i, \eta_i)$. Then since the $\lambda_i$ are bounded, we see that for any $R>0$,  
$| \wt\eta_i(t) |\to \infty$ uniformly on $-R\le t\le R$ as $i\to\infty$.

Now
\begin{align}
{\mc F}(p_-) \geq {\mc F}(\wt p_i(t)) = f(\wt x_i(t)) + \wt\eta_i(t) \mu(\wt x_i(t)) \geq {\mc F}(p_+)
\end{align}
which implies that $\mu(\wt x_i(t))\to 0$ uniformly on $-R\le t\le R$. Hence, since 0 is a regular value of $\mu$, $ \left\| \nabla \mu(\wt x_i(t)) \right\|$ on $-R\le t\le R$ is bounded away from 0 for large $i$.
Therefore, by the definition \eqref{energydef} of the energy of the solution $\wt{p}_i$, we see
\begin{multline*}
{\mc F}(p_+) - {\mc F}(p_-) \geq  \int_{-R}^R \left\| \wt p^\prime_i(t))\right\|^2 dt  \geq  \int_{-R}^R \left\| \wt x^\prime_i(t))\right\|^2 dt
\\= \int_{-R}^R \left\| \nabla f(x_i(t)) + \eta_i(t) \nabla \mu(x_i(t)) \right\|^2 dt \to \infty,
\end{multline*} 
which is impossible.
\end{proof}

\begin{prop}  For any $\lambda_1, \lambda_2\in \Lambda^{reg}$, there is a canonical isomorphism $\Phi_{\lambda_1, \lambda_2}: H^{\lambda_1} \to H^{\lambda_2}$ such that for $\lambda_1, \lambda_2, \lambda_3 \in \Lambda^{reg}$, $\Phi_{\lambda_2, \lambda_3} \circ \Phi_{\lambda_1, \lambda_2} = \Phi_{\lambda_1, \lambda_3}$.
\end{prop}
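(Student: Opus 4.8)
The plan is to establish the isomorphism by the standard continuation-map construction from Morse homology, adapted to the noncompact setting using the a priori confinement provided by Lemma \ref{lemma23}. First I would fix $\lambda_1,\lambda_2\in\Lambda^{reg}$ and choose a smooth monotone homotopy $\lambda:{\mb R}\to{\mb R}_+$ with $\lambda(t)=\lambda_1$ for $t\le -1$ and $\lambda(t)=\lambda_2$ for $t\ge 1$, staying inside a compact interval $[a,b]\subset{\mb R}_+$. For $p_-,p_+\in{\rm Crit}({\mc F})$, consider the nonautonomous equation obtained by replacing $\lambda$ in \eqref{eqn15a}--\eqref{eqn15b} by $\lambda(t)$, with the usual asymptotic boundary conditions at $p_\pm$. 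By Lemma \ref{lemma23} applied to $L=b$, all solutions of this homotopy equation are confined a priori to $(M\times[-K_b,K_b])\times[a,b]$ — indeed the energy identity ${\mc F}(p_-)\ge{\mc F}(p_+)$ and the same argument from the proof of Lemma \ref{lemma23} (which only used boundedness of $\lambda_i$) controls the $\eta$-component uniformly. Thus the relevant moduli spaces ${\mc M}^{hom}(p_-,p_+)$ live in a compact region and behave exactly as in the compact-manifold theory: after a generic perturbation of the homotopy they are cut out transversally, of dimension ${\rm index}(p_-,{\mc F})-{\rm index}(p_+,{\mc F})$, and when this is $0$ they are finite.

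The continuation map $\Phi_{\lambda_1,\lambda_2}:{\mc C}^{\lambda_1}\to{\mc C}^{\lambda_2}$ is then defined on generators by $\Phi_{\lambda_1,\lambda_2}(p_-)=\sum_{p_+}\#{\mc M}^{hom}(p_-,p_+)\cdot p_+$, the sum over $p_+$ with ${\rm index}(p_+,{\mc F})={\rm index}(p_-,{\mc F})$. That $\Phi_{\lambda_1,\lambda_2}$ is a chain map follows from the standard analysis of the one-dimensional homotopy moduli spaces (index difference $1$): their compactification is a $1$-manifold with boundary, and the boundary consists of once-broken configurations — either a $\lambda_1$-gradient orbit followed by a homotopy orbit, or a homotopy orbit followed by a $\lambda_2$-gradient orbit — again using Lemma \ref{lemma23} together with the Palais--Smale property (Lemma \ref{lem:ps}) to rule out escape to infinity in any limit. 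Counting boundary points modulo $2$ gives $\partial^{\lambda_2}\circ\Phi_{\lambda_1,\lambda_2}=\Phi_{\lambda_1,\lambda_2}\circ\partial^{\lambda_1}$, so $\Phi_{\lambda_1,\lambda_2}$ descends to $\Phi_{\lambda_1,\lambda_2}:H^{\lambda_1}\to H^{\lambda_2}$.

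To see that $\Phi_{\lambda_1,\lambda_2}$ is an isomorphism on homology and is canonical (independent of the choice of homotopy), I would run the usual two-parameter argument: given two homotopies from $\lambda_1$ to $\lambda_2$, a generic homotopy-of-homotopies produces a chain homotopy between the two continuation chain maps, so the induced map on $H^\lambda$ is well-defined; similarly, the constant homotopy at $\lambda_1$ (which has only stationary solutions, again by the energy/confinement bound) induces the identity on ${\mc C}^{\lambda_1}$ up to chain homotopy. Gluing a homotopy from $\lambda_1$ to $\lambda_2$ with one from $\lambda_2$ to $\lambda_3$ and deforming it to a homotopy from $\lambda_1$ to $\lambda_3$ yields, again by a chain-homotopy argument, the composition law $\Phi_{\lambda_2,\lambda_3}\circ\Phi_{\lambda_1,\lambda_2}=\Phi_{\lambda_1,\lambda_3}$ on homology; taking $\lambda_3=\lambda_1$ shows $\Phi_{\lambda_1,\lambda_2}$ and $\Phi_{\lambda_2,\lambda_1}$ are mutually inverse, hence isomorphisms.

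The main obstacle is purely the noncompactness of $M\times{\mb R}$: transversality, Fredholm theory, and the gluing/compactness arguments for the homotopy moduli spaces are standard on compact manifolds, but here one must ensure at every step — definition of $\Phi$, chain-map property, chain homotopies, composition — that no sequence of (broken) homotopy trajectories escapes to $|\eta|=\infty$. All of these follow from the single a priori estimate of Lemma \ref{lemma23}, whose proof already handles families $\lambda_i$ in a compact interval and therefore applies verbatim to the $t$-dependent homotopy parameter once one notes $\lambda(t)\in[a,b]$; combined with Lemma \ref{lem:ps} this confines everything to a fixed compact set and reduces the entire argument to the familiar compact case. The remaining work is the routine verification that the generic perturbations can be taken within the class of admissible homotopies without destroying the fixed endpoints $\lambda_1,\lambda_2\in\Lambda^{reg}$.
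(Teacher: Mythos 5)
Your proposal is correct and takes essentially the same route as the paper: the paper's proof likewise observes that Lemma \ref{lemma23} provides the a priori confinement needed to reduce to the compact case, and then invokes the continuation principle of \cite{Schwarz_book} (or, as an alternative, bifurcation analysis as in \cite{Floer_intersection}). You have simply unpacked the standard continuation-map construction that the paper cites, correctly identifying that the only nonstandard ingredient is the uniform $\eta$-bound, which indeed extends to the $t$-dependent homotopy $\lambda(t)\in[a,b]$ for exactly the reason you state.
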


\begin{proof}
For any $\lambda_1, \lambda_2 \in \Lambda^{reg}$, $\lambda_1< \lambda_2$, we can compare the two complexes ${\mc C}^{\lambda_1}$ and ${\mc C}^{\lambda_2}$ as in the case of compact manifolds, thanks to the compactness provided by the previous lemma. More precisely, we can either use the continuation principle  \cite{Schwarz_book}, or  bifurcation analysis as in \cite{Floer_intersection}, to show that the two chain complexes have isomorphic homology. Note that as we vary $\lambda$ from $\lambda_1$ to $\lambda_2$, the critical point set is fixed so there is no ``birth-death'' of critical points, but there may be a loss of transversality between unstable and stable manifolds at discrete values of $\lambda$.
\end{proof}

\section{Adiabatic limit  $\lambda\to \infty$\label{section_infinity}}

We assume
\begin{enumerate}
\item[(A5)] $(f|_{\mu^{-1}(0)}, g|_{\mu^{-1}(0)} )$ is Morse-Smale.
\end{enumerate}
\begin{thm}\label{thm31}
For $\lambda\in \Lambda^{reg}$ sufficiently large, the complex ${\mc C}^\lambda$ is canonically isomorphic to the Morse-Smale-Witten complex of the pair $(f|_{\mu^{-1}(0)}, g|_{\mu^{-1}(0)})$ with grading shifted by one.
\end{thm}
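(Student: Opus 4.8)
The plan is to establish, for every ordered pair $p_-,p_+\in{\rm Crit}({\mc F})$ with ${\rm index}(p_-,{\mc F})-{\rm index}(p_+,{\mc F})=1$ and for all $\lambda\in\Lambda^{reg}$ sufficiently large, a bijection between the moduli space ${\mc M}^\lambda(p_-,p_+)$ of orbits of (\ref{eqn15a})--(\ref{eqn15b}) and the moduli space ${\mc M}(x_{p_-},x_{p_+})$ of negative gradient orbits of $(f|_{\mu^{-1}(0)},g|_{\mu^{-1}(0)})$. Since there are only finitely many such pairs, a single threshold works for all of them, and by (A4) the set $\Lambda^{reg}$ is co-discrete in ${\mb R}_+$, so such $\lambda$ exist. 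As the bijection preserves the ${\mb Z}_2$-counts, it converts the tautological identification ${\rm Crit}({\mc F})\cong{\rm Crit}(f|_{\mu^{-1}(0)})$, $(x,\eta)\mapsto x$, into an isomorphism of chain complexes; by Lemma~\ref{lemma21} the grading is shifted up by one, and the isomorphism is canonical because it is the one induced by this $\lambda$-independent correspondence.

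The first ingredient is an a priori confinement estimate. Lemma~\ref{lemma23} already traps the orbits in $M\times[-K,K]$ for $\lambda$ in a bounded range, and along an orbit $\wt p=(x,\eta)$ of (\ref{eqn15a})--(\ref{eqn15b}) the energy identity reads
\begin{align*}
{\mc F}(p_-)-{\mc F}(p_+)=\int_{\mb R}\|\wt{p}^{\,\prime}(t)\|_{g_\lambda}^2\,dt=\int_{\mb R}\Bigl(\|x'(t)\|_g^2+\lambda^2\,\mu(x(t))^2\Bigr)\,dt,
\end{align*}
so that $\int_{\mb R}\mu(x(t))^2\,dt\le\bigl({\mc F}(p_-)-{\mc F}(p_+)\bigr)\lambda^{-2}$. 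Since the right-hand side of (\ref{eqn15a})--(\ref{eqn15b}) is uniformly bounded on $M\times[-K,K]$, the function $t\mapsto\mu(x(t))$ has uniformly bounded derivative, and the $L^2$ bound therefore upgrades to $\sup_t|\mu(x(t))|\to0$ as $\lambda\to\infty$, uniformly over all orbits from $p_-$ to $p_+$. Hence, for $\lambda$ large, every such orbit is $C^0$-trapped in an arbitrarily thin tubular neighborhood $N$ of $\mu^{-1}(0)\times[-K,K]$, on which $|\nabla\mu|$ is bounded away from $0$.

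Next I would recognize (\ref{eqn15a})--(\ref{eqn15b}) on $N$ as a standard slow--fast system with small parameter $\epsilon=\lambda^{-1}$: using a collar identification with $\mu$ as normal coordinate and then blowing that coordinate up by the factor $\lambda$, the fast variables are the rescaled normal coordinate $\lambda\mu(x)$ and $\eta$, while the slow variable is the point $y\in\mu^{-1}(0)$. A direct computation shows that the critical manifold is the graph $\{\lambda\mu=0,\ \eta=\eta_*(y)\}$ with $\eta_*=-\langle\nabla\mu,\nabla f\rangle/|\nabla\mu|^2$, which is canonically $\mu^{-1}(0)$ and passes through every $p\in{\rm Crit}({\mc F})$; that this manifold is normally hyperbolic, with fast eigenvalues $\pm|\nabla\mu|$ (one attracting, one repelling) in the fast time scale $\lambda t$; and that the reduced flow on it is the negative gradient flow of $f|_{\mu^{-1}(0)}$ for $g|_{\mu^{-1}(0)}$, since on $\mu^{-1}(0)$ the vector $\nabla f+\eta_*\nabla\mu$ is tangent to $\mu^{-1}(0)$ and equals $\nabla(f|_{\mu^{-1}(0)})$. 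In particular the unstable dimension of the fixed point $p$ is ${\rm index}(x_p,f|_{\mu^{-1}(0)})+1$, consistent with Lemma~\ref{lemma21}. Fenichel's theorem then produces, for large $\lambda$, a normally hyperbolic invariant slow manifold ${\mc M}_\lambda\subset N$ that still contains all the $p$'s and carries a flow $C^1$-close to that of $f|_{\mu^{-1}(0)}$; by persistence of transverse connections under $C^1$-small perturbations, together with (A5), the index-one connecting orbits inside ${\mc M}_\lambda$ are in bijection with those of $(f|_{\mu^{-1}(0)},g|_{\mu^{-1}(0)})$. Finally the classical (normally hyperbolic) exchange lemma of \cite{Jones_GSPT}, combined with the $C^0$-confinement above, identifies the index-one connecting orbits of the full system (\ref{eqn15a})--(\ref{eqn15b}) with those inside ${\mc M}_\lambda$, since $W^u(p_-)$ and $W^s(p_+)$ decompose into their traces in ${\mc M}_\lambda$ and the fast unstable, respectively stable, fibers over them. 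This yields the desired bijection. An alternative, purely Fredholm-theoretic argument would instead present ${\mc M}^\lambda(p_-,p_+)$ as the zero set of the section of Section~\ref{sec:anal} adapted to $g_\lambda$, perform the same rescaling, and run the implicit function theorem uniformly in $\epsilon=\lambda^{-1}$, the normal hyperbolicity furnishing a uniformly bounded right inverse for the linearized operator.

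The main obstacle is exactly this last identification: promoting the soft a priori $L^2$/$C^0$ confinement to a trapping inside the Fenichel neighborhood that is uniform over \emph{all} connecting orbits and all large $\lambda$, so that none of them can drift away from ${\mc M}_\lambda$ or oscillate in the fast direction. This is the surjectivity half of the bijection, and it is delicate because $\lambda\to\infty$ is a genuine singular perturbation (the $\eta$-equation degenerates in the limit); in the Fredholm formulation the same point reappears as the need for the estimates on $(D{\mc S}^\lambda)^{-1}$ to remain uniform as $\lambda\to\infty$.
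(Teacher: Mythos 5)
Your overall plan agrees in spirit with the paper's second (GSPT) proof of Theorem~\ref{thm31}: identify a normally hyperbolic critical manifold with transverse eigenvalues $\pm\|\nabla\mu\|$, note the reduced flow is the negative gradient flow of $f|_{\mu^{-1}(0)}$, invoke Fenichel persistence and structural stability of Morse--Smale flows, and rule out ``stray'' connections. But your rescaling differs from the paper's in a way that creates exactly the gap you flag. You blow up the normal coordinate, $w=\lambda\mu$, and keep $\eta$; the paper instead sets $\rho=\eta/\lambda$, $\tau=\lambda t$ and does not rescale the $M$-coordinates, so its critical manifold is $N_0=\{\mu=0,\rho=0\}$. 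In your coordinates a fixed-size Fenichel tube $\{|w|<c,\ |\eta-\eta_*|<c\}$ corresponds to $|\mu|<c/\lambda$, a slab of thickness $O(\lambda^{-1})$. Your bootstrap from $\int\mu^2\le C\lambda^{-2}$ and the derivative bound $|\tfrac{d}{dt}\mu|\le D$ only yields $\sup_t|\mu|=O(\lambda^{-2/3})$, which does \emph{not} place orbits inside that tube, and the appeal to Lemma~\ref{lemma23} for the derivative bound is itself questionable at $\lambda\to\infty$ (the lemma is stated for $\lambda$ in a bounded range). So the surjectivity half is genuinely open, as you say.

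The paper closes this without any a priori confinement at all, and without the exchange lemma (the exchange lemma is the $\lambda\to0$ tool; at this end the entire connecting orbit lies in the invariant slow manifold, so there is nothing to ``exchange''). After substituting $\rho=\eta/\lambda$, the neighborhood $V=\{|\mu|<\alpha,\ |\rho|<\alpha,\ |\rho\mu|<\alpha^2/4\}$ of $N_0$ has \emph{fixed} size, and the Lagrange multiplier becomes the Lyapunov function $E_\epsilon(x,\rho)=\epsilon f(x)+\rho\mu(x)$, which decreases along solutions and whose difference between the two equilibria of interest is $O(\epsilon)$. If a connecting orbit were not contained in $N_\epsilon$, normal hyperbolicity would force it to cross $\partial V$; after replacing the faces $|\mu|=\alpha$ and $|\rho|=\alpha$ by nearby invariant surfaces it must exit where $|\rho\mu|=\alpha^2/4$, hence where $|E_\epsilon|\approx\alpha^2/4=O(1)$, a contradiction. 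If you prefer to keep your coordinates, the sharp confinement $\|\mu(\wt x)\|_{W^{1,2}}=O(\lambda^{-1})$ that you actually need is exactly what the paper's Fredholm proof establishes via the uniformly invertible operator $D$ and the weighted norms (\ref{eqn48})--(\ref{eqn49}) preceding Proposition~\ref{prop33}; the crude $L^2$-to-$C^0$ bootstrap is not enough.
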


\begin{cor}\label{cor32}
For any $\lambda \in \Lambda^{reg}$, there is a canonical isomorphism
$$\Phi_{\lambda, +\infty} : H^\lambda_* \to H_{*-1}( \mu^{-1}(0), {\mb Z}_2).$$
\end{cor}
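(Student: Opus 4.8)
\emph{Overview.} The plan is to bootstrap Corollary \ref{cor32} from Theorem \ref{thm31}, the $\lambda$-invariance of $H^\lambda$ established in Section \ref{section2}, and the classical identification of Morse homology with singular homology for closed manifolds. First I would fix a value $\lambda_\infty \in \Lambda^{reg}$ large enough for Theorem \ref{thm31} to apply. That theorem provides a canonical chain isomorphism between $\mathcal{C}^{\lambda_\infty}$ and the Morse--Smale--Witten complex of $(f|_{\mu^{-1}(0)}, g|_{\mu^{-1}(0)})$ with grading raised by one; concretely, in view of Lemma \ref{lemma21}, $\mathcal{C}^{\lambda_\infty}_k \cong \mathcal{C}_{k-1}(f|_{\mu^{-1}(0)}, g|_{\mu^{-1}(0)})$. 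Passing to homology gives $H^{\lambda_\infty}_* \cong H_{*-1}(f|_{\mu^{-1}(0)}, g|_{\mu^{-1}(0)}; {\mb Z}_2)$. Since $0$ is a regular value of $\mu$ and $M$ is compact, $\mu^{-1}(0)$ is a closed manifold, and $(f|_{\mu^{-1}(0)}, g|_{\mu^{-1}(0)})$ is Morse--Smale by (A5); hence the standard theorem recalled in Section \ref{sec:mh} identifies its Morse homology with $H_{*-1}(\mu^{-1}(0); {\mb Z}_2)$. Composing produces an isomorphism $H^{\lambda_\infty}_* \to H_{*-1}(\mu^{-1}(0); {\mb Z}_2)$.

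\emph{Transport to all $\lambda$.} For an arbitrary $\lambda \in \Lambda^{reg}$, I would define $\Phi_{\lambda, +\infty}$ to be the composite of the canonical isomorphism $\Phi_{\lambda, \lambda_\infty}\colon H^\lambda_* \to H^{\lambda_\infty}_*$ furnished by the $\lambda$-invariance Proposition of Section \ref{section2} with the isomorphism just constructed. With this definition the stated functoriality $\Phi_{\lambda', +\infty} \circ \Phi_{\lambda, \lambda'} = \Phi_{\lambda, +\infty}$ for $\lambda, \lambda' \in \Lambda^{reg}$ is immediate from the cocycle identity $\Phi_{\lambda', \lambda_\infty} \circ \Phi_{\lambda, \lambda'} = \Phi_{\lambda, \lambda_\infty}$.

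\emph{Well-definedness and the main obstacle.} The one point that needs care — and which I expect to be the main obstacle — is that $\Phi_{\lambda, +\infty}$ must not depend on the auxiliary choice of large $\lambda_\infty$. Given two admissible values $\lambda_\infty \le \lambda_\infty'$, the cocycle identity reduces this to commutativity of the triangle relating $\Phi_{\lambda_\infty, \lambda_\infty'}$ with the two isomorphisms out of $H^{\lambda_\infty}_*$ and $H^{\lambda_\infty'}_*$ constructed above; equivalently, one must show that the chain isomorphism of Theorem \ref{thm31} intertwines the continuation map $\Phi_{\lambda_\infty, \lambda_\infty'}$ with the identity of $\mathcal{C}(f|_{\mu^{-1}(0)}, g|_{\mu^{-1}(0)})$. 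To see this one uses the uniform-in-$\lambda$ a priori confinement near $\mu^{-1}(0) \times {\mb R}$ that underlies Theorem \ref{thm31} (sketched in Section \ref{sec:out}): for every $\lambda$ in the compact interval $[\lambda_\infty, \lambda_\infty']$, all orbits of \eqref{eqn15a}--\eqref{eqn15b} connecting critical points of $\mathcal{F}$ lie in a fixed small tubular neighborhood of $\mu^{-1}(0) \times {\mb R}$, so the continuation map computed by the $\lambda$-dependent flow over that interval is carried by the Theorem \ref{thm31} identification to the continuation map of $(f|_{\mu^{-1}(0)}, g|_{\mu^{-1}(0)})$ with a homotopy of itself, i.e.\ the identity. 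Once this confinement-plus-compatibility statement is in hand, the triangle commutes, $\Phi_{\lambda, +\infty}$ is canonical, and the remaining ingredients (Theorem \ref{thm31}, the Morse-to-singular comparison of Section \ref{sec:mh}, and the cocycle property of the $\Phi_{\lambda_1, \lambda_2}$) combine formally to finish the proof.
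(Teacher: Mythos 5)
Your proposal is correct and takes the route the paper implicitly intends: the corollary is stated without proof, and the evident argument is exactly yours — apply Theorem \ref{thm31} together with the classical identification of Morse homology with singular homology for the closed manifold $\mu^{-1}(0)$ to handle large $\lambda_\infty\in\Lambda^{reg}$, then transport to arbitrary $\lambda\in\Lambda^{reg}$ via the canonical continuation isomorphisms $\Phi_{\lambda,\lambda_\infty}$ from the Proposition of Section \ref{sec:msw}. The independence-of-$\lambda_\infty$ point you raise is a genuine subtlety that the paper leaves tacit; one small caution about your sketch is that the a priori confinement estimate of Proposition \ref{prop33} is proved for gradient-flow orbits at fixed $\lambda$, whereas the continuation map counts solutions of a nonautonomous equation, so the confinement bound for those solutions needs its own (structurally identical) energy argument before the reduction to the continuation map of $(f|_{\mu^{-1}(0)}, g|_{\mu^{-1}(0)})$ with itself can be invoked; equivalently one can note that Theorem \ref{thm36} exhibits $\bigcup_{\lambda\ge\Lambda_0}\wt{\mc M}^\lambda(p_-,p_+)$ as a trivial family over $[\Lambda_0,+\infty)$, so no bifurcations occur there and the continuation map between any two large regular values is the identity at the chain level.
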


We give two proofs of Theorem \ref{thm31}, one using the infinite-dimensional implicit function theorem and one using geometric singular perturbation theory.  The first is more likely to generalize. The second gives more geometric intuition. There are other ways to relate the two chain complexes. For example, in the appendix of \cite{Frauenfelder_vortex} Frauenfelder had a different approach by deforming the function $f$ in the normal direction of $\mu^{-1}(0)$.

\subsection{Proof of Theorem \ref{thm31} using the implicit function theorem}

We first give a sketch of this proof. For fixed $p_\pm \in {\rm Crit} {\mc F}$, we prove in Proposition \ref{prop33} that, for $\lambda$ sufficiently large, any orbit in ${\mc M}^\lambda(p_-, p_+)$ will be close to some orbit of the negative gradient flow of $f|_{\mu^{-1}(0)}$ from $p_-$ to $p_+$. Such orbits form a moduli space ${\mc N}^\infty(p_-, p_+)$. Next we show that, for large $\lambda$, there exists a homeomorphism $\Phi^\lambda: {\mc N}^\infty(p_-, p_+) \to {\mc M}^\lambda(p_-, p_+)$, which is constructed by using the infinite dimensional implicit function theorem. Proposition \ref{prop33} is used to prove the surjectivity $\Phi^\lambda$. Since the Morse homology is defined by counting orbits connecting two critical points with adjacent Morse indices, the homeomorphism (Theorem \ref{thm36}) means that the counting of ${\mc M}^\lambda(p_-, p_+)$ and that of ${\mc N}^\infty(p_-, p_+)$ are the same. Noting that the indices of $p_\pm$ are dropped by 1 when regarded as critical points of $f|_{\mu^{-1}(0)}$, Theroem \ref{thm31} follows immediately.

\subsubsection{Convergence to orbits in the level set}

For any $\wt{p} = (\wt{x}, \wt\eta) \in \wt{\mc M}^\lambda(p_-, p_+)$, its energy, calculated by the metric $g_\lambda$ on $M \times {\mb R}$, is
\begin{align}
E= {\mc F}(p_-) - {\mc F}(p_+) = \left\| \wt{x}'\right\|_{L^2}^2 + \lambda^2 \left\| \mu( \wt{x}) \right\|_{L^2}^2.
\end{align}
Here the $L^2$-norms are still defined using the fixed metric $g_1$.
So
\begin{align}
\left\| \mu(\wt{x})\right\|_{L^2} \leq {E^{1\over 2} \over \lambda}.
\end{align}
On the other hand, one has
\begin{align}
 \left\| {d\over dt} \mu( \wt{x}(t)) \right\|_{L^2} \leq \left\| d\mu \right\|_{L^\infty} \left\| \wt{x}'(t) \right\|_{L^2}.
\end{align}
So by Sobolev embedding $W^{1, 2} \to C^0$, for any $\epsilon>0$, there exists $\Lambda_\epsilon>0$ such that for $\lambda>\Lambda_\epsilon$ and $\wt{x}\in \wt{\mc M}^\lambda(p_-, p_+)$, we have $\wt{x}(t)\in U_\epsilon = \mu^{-1}((-\epsilon, \epsilon))$.

Now, consider the Banach manifold ${\mc B}= {\mc B}^{1, 2} $ of $W^{1, 2}_{loc}$-maps $\wt{p}= (\wt{x}, \wt\eta)$ from ${\mb R} \to U_\epsilon \times {\mb R}$ such that $\wt{p}$ is assymptotic to $p_\pm= \left( x_\pm, \eta_\pm \right) \in {\rm Crit}({\mc F})$ at $\pm \infty$ in the following sense: there exists $R>0$ and $\wt{W}_- \in W^{1, 2}((-\infty, -R], T_{p_-} M \oplus {\mb R})$ and $\wt{W}_+\in W^{1, 2}([R, +\infty), T_{p_+}M \oplus {\mb R})$ such that $\wt{p}|_{(-\infty, -R]} = \exp_{p_-} \wt{W}_-$, $\wt{p}|_{[R, +\infty)} = \exp_{p_+} \wt{W}_+$. Consider ${\mc E} \to {\mc B}$ the Banach space bundle whose fibre over $\wt{p}$ is ${\mc E}_{\wt{p}}= L^2( \wt{x}^*TM \oplus {\mb R})$. For $\lambda > \Lambda_\epsilon$, consider the Fredholm section $\wt{S}^\lambda: {\mc B}\to {\mc E}$ given by
\begin{align}
 \wt{S}^\lambda(\wt{p}) = \wt{S}^\lambda( \wt{x}, \wt\eta) = \left( {d\wt{x}\over dt} + \nabla f + \wt\eta \nabla \mu, {d\wt\eta\over dt} + \lambda^2 \mu(\wt{x}) \right). 
\end{align}
Then $\wt{\mc M}^\lambda(p_-, p_+) = \left( \wt{S}^\lambda \right)^{-1}(0)$, where $0$ is the zero section of ${\mc E}\to {\mc B}$. The linearization of $\wt{\mc S}^\lambda$ at $\wt{p}=(\wt{x}, \wt\eta)\in {\mc B}$ is given by 
\begin{align}
\begin{array}{cccc}
\wt{\mc D}_{\wt{p}} : & T_{\wt{p}} {\mc B} & \to & {\mc E}_{\wt{p}}\\
                      & \left( \begin{array}{c}  V \\ H \end{array} \right) & \mapsto &  \left( \begin{array}{c} \nabla_t V + \nabla_V ( \nabla f + \eta \nabla \mu) + H \nabla \mu \\
                       {dH\over dt} + \lambda^2 d\mu(V)  \end{array}\right)\end{array}
\end{align}

On $U_\epsilon$, there is the line bundle $L\subset TM$ generated by $\nabla \mu$, and denote by $L^\bot$ the orthogonal complement with respect to the Riemannian metric $g$. Then, for any $\wt{p}\in {\mc B}$, we can decompose the domain and target space of $\wt{D}_{\wt{p}}$ as
\begin{align}\label{517}
 T_{\wt{p}}{\mc B} \simeq W_L(\wt{p}) \oplus W_T( \wt{p}),\  W_L(\wt{p}) = W^{1, 2}( \wt{x}^*L \oplus {\mb R}),\  W_T( \wt{p}) = W^{1, 2} ( \wt{x}^* L^\bot );
\end{align}
\begin{align}\label{518}
 {\mc E}_{\wt{p}} \simeq {\mc E}_L( \wt{p}) \oplus {\mc E}_T( \wt{p}),\  {\mc E}_L( \wt{p}) = L^2 ( \wt{x}^*L \oplus {\mb R}),\ {\mc E}_T( \wt{p}) = L^2( \wt{x}^* L^\bot ).
\end{align}
We rescale the norms on $W_L( \wt{p})$ and ${\mc E}_L(\wt{p})$ as follows. We identify $(h_1 \nabla \mu, h_2) \in W_L(\wt{p})$ with $(h_1, h_2) \in W^{1, 2}\oplus W^{1, 2}$ and define
\begin{align}\label{eqn48}
 \left\| (h_1, h_2 ) \right\|_{W_\lambda} = \lambda \left\| h_1  \right\|_{L_2} + \left\| h_1'  \right\|_{L^2} + \left\| h_2 \right\|_{L^2} + \lambda^{-1} \left\| h_2' \right\|_{L^2};
\end{align}
and for $(h_1 \nabla \mu, h_2) \in {\mc E}_L( \wt{p})$, identify it with $(h_1, h_2) \in L^2\oplus L^2$ and define
\begin{align}\label{eqn49}
\left\| ( h_1 , h_2 ) \right\|_{L_\lambda} = \left\| h_1  \right\|_{L^2} + \lambda^{-1} \left\| h_2 \right\|_{L^2}.
\end{align}
We leave the norms on their complements unchanged, and use $W_\lambda$ and $L_\lambda$ to denote the norms on $T_{\wt{p}} {\mc B}$ and ${\mc E}_{\wt{p}}$ respectively. 

Now we describe the limit objects. There exists a smooth function $\zeta: U_\epsilon \to {\mb R}$ defined by the condition
\begin{align}
 \langle \nabla \mu (x) , \nabla f (x) + \zeta (x) \nabla \mu(x) \rangle =0.
\end{align}
Then $\nabla f + \zeta \nabla \mu$ is a smooth vector field whose restriction to $\mu^{-1}(0)$ is the gradient of $\ov{f}= f|_{\mu^{-1}(0)}$ with respect to the restriction of the Riemannian metric. We denote by $\wt{\mc N}^\infty( p_-, p_+)$ the space of solutions of the negative gradient flow of $\ov{f}$, whose elements are denoted by $y: {\mb R} \to \mu^{-1}(0)$; and by ${\mc N}^\infty(p_-, p_+)$ the quotient space by identifying reparametrizations, and by $\ov{\mc N}^\infty( p_-, p_+)$ the compactified moduli space by adding broken orbits.

\begin{prop}\label{prop33} Suppose $\lambda_\nu \to \infty$ and $\wt{p}_\nu= (\wt{x}_\nu, \wt\eta_\nu) \in \wt{\mc M}^{\lambda_\nu}(p_-, p_+)$. Then there is a subsequence, still indexed by $\nu$, and a broken orbit
${\mc Y}= \left(\left[ y_i \right]  \right)_{i=1}^n \in \ov{\mc N}^\infty(p_-, p_+)$ (where $[y_i]$ is the orbit of $y_i$) and a constant $c_0>0$ such that
\begin{enumerate}
\item $\left\| \left( \mu(\wt{x}_\nu), \wt\eta_\nu- \zeta( \wt{x}_\nu) \right) \right\|_{W_{\lambda_\nu}}\leq c_0 \lambda_\nu^{-1}$;

\item There exists $t_{1, \nu}, t_{2, \nu}, \ldots, t_{n, \nu}\in {\mb R}$ such that $\wt{x}_\nu(t_{i, \nu}+ \cdot)$ converges to $y_i$ in $C^1_{loc}$-topology;
\end{enumerate}
\end{prop}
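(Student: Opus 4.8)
The plan is to establish Proposition \ref{prop33} as a compactness-plus-convergence statement, following the standard ``elliptic bootstrapping + exponential decay'' paradigm adapted to the fast-slow splitting. First I would extract the uniform confinement to $U_\epsilon$ already proven above: given $\lambda_\nu\to\infty$, the energy bound $E={\mc F}(p_-)-{\mc F}(p_+)$ forces $\|\mu(\wt x_\nu)\|_{L^2}\le E^{1/2}\lambda_\nu^{-1}$ and, via the $W^{1,2}\hookrightarrow C^0$ estimate on $\mu(\wt x_\nu(t))$, the pointwise bound $\wt x_\nu(t)\in U_\epsilon$ for all $\nu$ large. The key new point in item (1) is to upgrade this to the $W_{\lambda_\nu}$-norm control on the pair $(\mu(\wt x_\nu),\wt\eta_\nu-\zeta(\wt x_\nu))$. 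For this I would differentiate the equations: from \eqref{eqn15a}--\eqref{eqn15b}, $\tfrac{d}{dt}\mu(\wt x_\nu)=\langle d\mu,\wt x_\nu'\rangle$ and $\wt\eta_\nu'=-\lambda_\nu^2\mu(\wt x_\nu)$, so the ``fast'' pair satisfies a linear ODE with a forcing term controlled by $\|\wt x_\nu'\|_{L^2}\le E^{1/2}$; combining with the defining relation $\langle\nabla\mu,\nabla f+\zeta\nabla\mu\rangle=0$ for $\zeta$, one sees that $\wt\eta_\nu-\zeta(\wt x_\nu)$ measures the deviation of the $L$-component of $\nabla f+\wt\eta_\nu\nabla\mu$ from zero, i.e. exactly the component that the rescaled norm \eqref{eqn48} weights by $\lambda$. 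A direct estimate—multiply the $L$-projected equation by the appropriate test function and integrate—should yield $\|(\mu(\wt x_\nu),\wt\eta_\nu-\zeta(\wt x_\nu))\|_{W_{\lambda_\nu}}\le c_0\lambda_\nu^{-1}$, where the constant $c_0$ depends only on $E$, $\|d\mu\|_{L^\infty}$, $\|\zeta\|_{C^1}$ and the lower bound for $\|\nabla\mu\|$ on $\ov U_\epsilon$.

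Next, for item (2) I would set up the standard compactness argument for the $\wt x_\nu$. Because the $\wt x_\nu$ take values in the compact set $\ov U_\epsilon$ and $\|\wt x_\nu'\|_{L^2}\le E^{1/2}$, together with the equation $\wt x_\nu'=-(\nabla f+\wt\eta_\nu\nabla\mu)$ and the just-established bound $|\wt\eta_\nu(t)|\le\|\zeta\|_{C^0}+c_0$ (so the $\wt\eta_\nu$ are uniformly bounded in $C^0$), elliptic/ODE bootstrapping gives uniform $C^1_{loc}$—indeed $C^k_{loc}$ for all $k$—bounds on any sequence of time-translates $\wt x_\nu(t_\nu+\cdot)$. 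One then performs the usual ``bubbling-free'' extraction: choose translates $t_{1,\nu},\dots,t_{n,\nu}$ separating the energy concentration, pass to Arzelà-Ascoli limits $y_i$, check that each $y_i$ solves the limiting equation $y_i'=-(\nabla f+\zeta\nabla\mu)(y_i)$ with values in $\mu^{-1}(0)$ (using item (1) to kill $\mu(\wt x_\nu)$ and $\wt\eta_\nu-\zeta$ in the limit), and verify that $y_i$ restricted to $\mu^{-1}(0)$ is a negative gradient flow line of $\ov f$. Finiteness of the energy $E$ bounds $n$, and the no-energy-loss argument (standard in Morse theory compactness, cf. \cite{Schwarz_book}) ensures the $y_i$ concatenate into a broken orbit ${\mc Y}\in\ov{\mc N}^\infty(p_-,p_+)$; the asymptotics at $\pm\infty$ follow from exponential decay near the nondegenerate critical points $p_\pm$.

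The main obstacle I expect is the loss of normal hyperbolicity at points of $U_\epsilon$ where $\nabla\mu$ is tangent to level sets of... more precisely, the subtlety is that the ``fast'' estimate in item (1) must be uniform in $\nu$ even though the linearized fast operator $\tfrac{dH}{dt}+\lambda^2 d\mu(V)$ degenerates as $\lambda\to\infty$ in a $\lambda$-dependent way; the rescaled norms \eqref{eqn48}--\eqref{eqn49} are precisely engineered so that the fast block of $\wt{\mc D}_{\wt p}$ becomes uniformly invertible, but proving the a priori bound requires handling the coupling term $H\nabla\mu$ between the $L$ and $L^\perp$ blocks carefully—one must show this coupling is lower-order in $\lambda$. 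I would address this by writing $\wt x_\nu'=-(\nabla f+\wt\eta_\nu\nabla\mu)$, projecting onto $L$ and $L^\perp$, and observing that the $L$-component of the RHS is $-(\wt\eta_\nu-\zeta(\wt x_\nu))\|\nabla\mu\|^2$ up to controlled error, so that $\tfrac{d}{dt}\mu(\wt x_\nu)\sim-(\wt\eta_\nu-\zeta)\|\nabla\mu\|^2$; paired with $\wt\eta_\nu'=-\lambda^2\mu(\wt x_\nu)$ this is a damped oscillator in the $(\mu,\wt\eta-\zeta)$ variables whose $L^2$-energy is pumped only by the $L^\perp$-dynamics, and a Grönwall-type or direct integration estimate closes the bound. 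A secondary technical point is justifying that the limits $y_i$ actually lie in $\mu^{-1}(0)$ rather than merely in $U_\epsilon$, which again uses item (1): $\|\mu(\wt x_\nu)\|_{C^0}\to0$ by the same $W^{1,2}\hookrightarrow C^0$ estimate now with the sharper $\lambda_\nu^{-1}$ rate.
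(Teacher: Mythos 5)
Your approach mirrors the paper's: confine the solution to $U_\epsilon$ via the energy bound, apply $d\mu$ to the flow equation to produce a coupled first-order system for the pair $(\mu(\wt x_\nu),\wt\eta_\nu-\zeta(\wt x_\nu))$ (the paper's equations \eqref{eqn411}--\eqref{eqn412}), and exploit the $O(\lambda^{-1})$-bounded inverse of the $\lambda$-scaled $2\times2$ operator to obtain item (1), then feed the resulting uniform bound on $\wt\eta_\nu$ into an Arzel\`a--Ascoli/no-energy-loss extraction for item (2). The one point where your language could lead you astray: the coefficient matrix $\begin{pmatrix}0 & |\nabla\mu|^2\\ 1 & 0\end{pmatrix}$ has real eigenvalues $\pm\|\nabla\mu\|$ of opposite sign, so the $(\mu,\wt\eta-\zeta)$-system is a \emph{saddle}, not a damped oscillator, and a Gr\"onwall estimate propagated one way in time does not close; the correct mechanism is the exponential dichotomy—integrate the decaying eigendirection from $-\infty$ and the growing one from $+\infty$—which is what the paper encodes by the assertion that $D$ is bounded below by $c\lambda$ and has a uniformly bounded right inverse. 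Your fallback phrase ``or direct integration estimate'' is the right one; the ``Gr\"onwall'' route is not.
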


\begin{proof} 
Apply $d\mu$ to the equation $\wt{x}_\nu'(t) + \nabla f( \wt{x}_\nu(t)) + \wt\eta_\nu(t) \nabla \mu ( \wt{x}_\nu(t)) = 0$, we obtain
\begin{multline}\label{eqn411}
0= {d\over dt} \mu( \wt{x}_\nu(t)) + \langle \nabla \mu ( \wt{x}_\nu(t)), \nabla f (\wt{x}_\nu(t)) + \wt\eta_\nu(t) \nabla \mu( \wt{x}_\nu(t))\rangle \\
= {d\over dt} \mu( \wt{x}_\nu(t)) + \lambda_\nu \left| \nabla \mu \right|^2 \left( {1\over \lambda_\nu} ( \wt\eta_\nu(t)- \zeta( \wt{x}_\nu(t))) \right) .
\end{multline}
Also we have 
\begin{align}\label{eqn412}
 {d\over dt} \left( {1\over \lambda_\nu} \left( \wt\eta_\nu- \zeta ( \wt{x}_\nu)  \right)  \right) + \lambda \mu( \wt{x}_\nu) = - {1\over \lambda_\nu} {d\over dt} \zeta (\wt{x}_\nu ).
\end{align}
Consider the linear operator
\begin{align}
\begin{array}{cccc}
 D: & W^{1, 2}({\mb R}, {\mb R}^2) & \to &  L^2( {\mb R}, {\mb R}^2)\\
 & (f_1, f_2)  & \mapsto & \left( {d\over dt} f_1 + \lambda \left|\nabla\mu\right|^2 f_2, {d\over dt } f_2 + \lambda  f_1 \right)
\end{array}.
\end{align}
If regarded as an unbounded operator from $L^2$ to $L^2$, then it is bounded from below by $c \lambda$ for some constant $c$. Then (\ref{eqn411}) and (\ref{eqn412}) imply that
\begin{align}
 \left\| \mu( \wt{x}_\nu) \right\|_{L^2} + \lambda_\nu^{-1} \left\| \wt\eta_\nu - \zeta (x_\nu) \right\|_{L^2} \leq {c\over \lambda_\nu^2} \left\| {d\over dt} \zeta( \wt{x}_\nu) \right\|_{L^2}\leq c \lambda_\nu^{-2}.
\end{align}
Also, $D$ has a uniformly bounded right inverse, so
\begin{align}
\left\| \mu( \wt{x}_\nu) \right\|_{W^{1, 2}} + \lambda_\nu^{-1}\left\| \wt\eta_\nu- \zeta( \wt{x}_\nu) \right\|_{W^{1,2}}\leq c \lambda_\nu^{-1}.
\end{align}
This implies the first claim of this proposition. Moreover, the Sobolev embedding $W^{1, 2}\to C^0$ implies in particular that $\wt\eta_\nu$ is uniformly bounded since $\zeta(x_\nu)$ is. Then by the differential equation on $\wt{x}_\nu$, we have that $\left| \wt{x}_\nu' \right|$ is uniformly bounded.

Now we identify $U_\epsilon \simeq \mu^{-1}(0)\times (-\epsilon, \epsilon)$ such that the projection to the second component is equal to $\mu$. Then we can write
\begin{align}
\wt{x}_\nu(t)= \left( \ov{x}_\nu(t), \mu( \wt{x}_\nu(t))\right).
\end{align}
Projecting to the first factor and using the fact that $\left| \wt{x}_\nu' \right|$ is uniformly bounded, we see that there exists $K>0$ independent of $\nu$, such that 
\begin{align}
\left| \ov{x}_\nu'(t) + \nabla \ov{f}\left( (\ov{x}_\nu(t)) \right) \right|\leq  K | \mu( \wt{x}_\nu(t))|.
\end{align}
This implies that a subsequence of $\ov{x}_\nu$ converges to a broken orbit for the induced function $\ov{f}$ in $C^0_{loc}$-topology. More precisely, there exists $t_{1, \nu}, \ldots, t_{k, \nu}\in {\mb R}$ such that $\ov{x}_\nu(t_{i, \nu} + \cdot) $ converges to an orbit $y_i$ in $\mu^{-1}(0)$ in $C^0_{loc}$-topology.

Then, by the Sobolev embedding $W^{1, 2} \to C^0$ and the $C^0_{loc}$ convergence of $ \wt{x}_\nu(t_{i, \nu}+ \cdot)$ to $y_i$, we see that $\wt\eta_\nu( t_{i, \nu}+ \cdot)$ converges to $\zeta(y_i)$ in $C^0_{loc}$. This implies that $ \wt{x}_\nu(t_{i, \nu} + \cdot)$ converges to $y_i$ in $C^1_{loc}$. 
\end{proof}

\subsubsection{Applying implicit function theorem and the isomorphism of chain complexes}

Now we want to prove that any $y \in \wt{\mc N}^\infty(p_-, p_+)$ can be approximated by $\wt{p} \in \wt{\mc M}^\lambda(p_-, p_+)$, so that in particular, when ${\rm index} (p_-, {\mc F}) - {\rm index}\,  (p_+, {\mc F}) = 1$, there is a canonical one-to-one correspondence between ${\mc N}^\infty(p_-, p_+)$ and ${\mc M}^\lambda(p_-, p_+)$.

Since we have assumed that the restriction of $(f, g)$ to $\mu^{-1}(0)$ is Morse-Smale, the trajectory $y$ is transverse. Namely, the linearized operator
\begin{align}
\begin{array}{cccc}
 \ov{D}_y: & W^{1, 2}( y^* T\mu^{-1}(0) ) & \to & L^2( y^* T\mu^{-1}(0))\\
           & V & \mapsto & \nabla_{y'} V + \nabla_V ( \nabla f + \zeta \nabla \mu) 
           \end{array}
\end{align}
is surjective and has a bounded right inverse $\ov{Q}_y$. In particular, we can choose $\ov{Q}_y$ such that 
\begin{align}
 {\rm Im} \, \ov{Q}_y= \left\{ V\in W^{1, 2}( y^*T \mu^{-1}(0) )\ |\ g(V(0), W(0))=0 \ \forall W\in {\rm ker} \ov{D}_y\right\}.
\end{align}

For any large $\lambda$, we take our approximate solution just to be 
\begin{align}
 \wt{y}(t) = \left( y(t), \zeta(y(t)) \right).
\end{align}
Note that $y(t)$ converges to $p_\pm$ exponentially as $t\to \pm\infty$, so $\wt{y} \in {\mc B}$. Then denote the linearization of $\wt{S}^\lambda$ at $\wt{y}$ by
\begin{align}
\begin{array}{cccc}
 \wt{D}_{\wt{y}}: & W^{1, 2}( y^*TM \oplus {\mb R}) &  \to &  L^2 (y^*TM \oplus {\mb R})\\
                  & \left(\begin{array}{c} V\\ h      \end{array} \right) & \mapsto & \left(\begin{array}{c}  \nabla_{y'}	 V + \nabla_V ( \nabla f + \zeta \nabla \mu) + h \nabla \mu\\
                  h' + \lambda^2 d\mu(V) \end{array}\right).
                  \end{array}
\end{align}

Now we construct a right inverse to $\wt{D}_{\wt{y}}$ out of $\ov{Q}_y$. Note that, for any $\ov{V}\in W_T(\wt{y})$
\begin{align}
 \wt{D}_{\wt{y}} (\ov{V}) =  \left( \nabla_t \ov{V} + \nabla_{\ov{V}} ( \nabla f + \zeta \nabla \mu ), \lambda^2 d\mu( \ov{V}) \right) = \left( \ov{D}_y (\ov{V}), 0 \right) \in {\mc E}_T( \wt{y}).
\end{align}
Hence with respect to the decomposition (\ref{517}) and (\ref{518}), the linearized operator can be written as
\begin{align}\label{equation323}
 \wt{D}_{\wt{y}} = \left( \begin{array}{cc}  \ov{D}_y & A_y \\ 0 & D'_{\wt{y}}    \end{array}\right).
\end{align}
Here $A_y$ is given by
\begin{align}
 A_y( h_1 \nabla \mu, h_2)  = h_1 W
\end{align}
where $W$ is a smooth tangent vector field on $\mu^{-1}(0)$. Hence there exists $c_1>0$ such that
\begin{align}
\left\| A_y(h_1 \nabla \mu, h_2 ) \right\|_{L_\lambda} \leq c_1 \left\| h_1 \right\|_{L^2} \leq c_1 \lambda^{-1} \left\| h_1 \right\|_{W_\lambda}.
\end{align}

Now we look at the operator $D'_{\wt{y}}$ in (\ref{equation323}). After trivialize the bundle ${\mb R}\{ \nabla \mu \} \oplus {\mb R}$ isometrically to ${\mb R}^2$, we see the operator $D'_{\wt{y}}$ is transformed into
\begin{align}
 {\mf D}'_{\wt{y}}= {d\over dt} + \left( \begin{array}{cc} a(t) & \lambda b(t) \\
                                                            \lambda b(t) & 0    \end{array}          \right) := {d\over dt} + B(t).
\end{align}
It is easy to see that, for $\lambda$ large, $B(t)$ has 1 positive eigenvalue and one negative eigenvalue, both of which are bounded away from zero by $c \lambda$, where $c$ is a constant independent of $\lambda$. Hence we have a (unique)bounded right inverse 
\begin{align}
 Q'_{\wt{y}}: {\mc E}_L( \wt{y}) \to W_L(\wt{y}),\  \left\| Q'_{\wt{y}} \right\|\leq c_2
\end{align}
for some $c_2>0$. Then define
\begin{align}
 Q_{\wt{y} } = \left( \begin{array}{cc} \ov{Q}_y & 0 \\ 0 & Q'_{\wt{y}} \end{array} \right)
\end{align}
which serves as the approximate right inverse. 

\begin{lem}
 There exists $\Lambda_0>0$, such that for all $\lambda > \Lambda_0$,
\begin{align}
 \left\| {\rm Id} - \wt{D}_{\wt{y}} Q_{\wt{y}} \right\|_{L_\lambda} < {1\over 2}
\end{align}
with respect to the operator norm of the space $L_\lambda$.
\end{lem}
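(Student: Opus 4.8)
The plan is to estimate the operator $\mathrm{Id} - \wt{D}_{\wt{y}} Q_{\wt{y}}$ block by block, using the triangular structure \eqref{equation323} of $\wt{D}_{\wt{y}}$ and the block-diagonal form of $Q_{\wt{y}}$. Writing out the product, we have
\begin{align*}
\wt{D}_{\wt{y}} Q_{\wt{y}} = \left( \begin{array}{cc} \ov{D}_y \ov{Q}_y & A_y Q'_{\wt{y}} \\ 0 & D'_{\wt{y}} Q'_{\wt{y}} \end{array}\right) = \left( \begin{array}{cc} \mathrm{Id} & A_y Q'_{\wt{y}} \\ 0 & \mathrm{Id} \end{array}\right),
\end{align*}
since $\ov{Q}_y$ and $Q'_{\wt{y}}$ are genuine right inverses of $\ov{D}_y$ and $D'_{\wt{y}}$ respectively. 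Hence $\mathrm{Id} - \wt{D}_{\wt{y}} Q_{\wt{y}}$ is the off-diagonal operator sending $(e_L, e_T) \in {\mc E}_L(\wt{y}) \oplus {\mc E}_T(\wt{y})$ to $(- A_y Q'_{\wt{y}} e_L, 0)$. So the whole estimate reduces to bounding the single composition $A_y Q'_{\wt{y}}$ as a map ${\mc E}_L(\wt{y}) \to {\mc E}_T(\wt{y})$, where ${\mc E}_T$ carries the unchanged $L^2$-norm and ${\mc E}_L$ carries the rescaled norm $L_\lambda$ of \eqref{eqn49}.

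The key chain of inequalities is then: given $e_L = (h_1, h_2) \in {\mc E}_L(\wt{y})$ with $\|e_L\|_{L_\lambda} \le 1$, set $(g_1, g_2) = Q'_{\wt{y}} e_L \in W_L(\wt{y})$. Since $\|Q'_{\wt{y}}\| \le c_2$ with respect to the norms $L_\lambda$ on the source and $W_\lambda$ on the target, we get $\|(g_1, g_2)\|_{W_\lambda} \le c_2$. Now apply $A_y$: by the estimate already established in the excerpt, $\|A_y(g_1 \nabla\mu, g_2)\|_{L_\lambda} = \|A_y(g_1 \nabla\mu, g_2)\|_{L^2} \le c_1 \|g_1\|_{L^2} \le c_1 \lambda^{-1} \|(g_1,g_2)\|_{W_\lambda} \le c_1 c_2 \lambda^{-1}$. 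Therefore $\|\mathrm{Id} - \wt{D}_{\wt{y}} Q_{\wt{y}}\|_{L_\lambda} \le c_1 c_2 \lambda^{-1}$, and choosing $\Lambda_0 = 2 c_1 c_2$ makes this less than $\tfrac12$ for all $\lambda > \Lambda_0$. (One should be slightly careful that the inequality $\|Q'_{\wt{y}}\| \le c_2$ quoted just above is with respect to the $L_\lambda$–$W_\lambda$ pair; this is exactly how $c_2$ was produced, from the lower bound $c\lambda$ on the eigenvalues of $B(t)$ together with the definitions \eqref{eqn48}–\eqref{eqn49}, so no new work is needed, but it is worth stating explicitly.)

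There is essentially no serious obstacle here; the lemma is a bookkeeping consequence of the two norm estimates already proved for $A_y$ and $Q'_{\wt{y}}$ plus the exact-right-inverse property of $\ov{Q}_y$ and $Q'_{\wt{y}}$. The only point requiring a modicum of care is the consistency of which norm ($L^2$ versus $L_\lambda$, $W^{1,2}$ versus $W_\lambda$) is used at each stage, since the gain of $\lambda^{-1}$ comes precisely from the mismatch between the rescaled norm on $W_L(\wt{y})$ and the plain $L^2$-norm that controls $\|h_1\|_{L^2}$ in the bound for $A_y$. I would therefore write the proof as: (1) compute the product to see that only the top-right block survives; (2) invoke $\|Q'_{\wt{y}}\|_{L_\lambda \to W_\lambda} \le c_2$; (3) invoke $\|A_y(\cdot)\|_{L_\lambda} \le c_1 \lambda^{-1}\|\cdot\|_{W_\lambda}$; (4) combine and set $\Lambda_0 = 2 c_1 c_2$.
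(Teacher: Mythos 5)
Your proposal is correct and follows essentially the same route as the paper: identify $\mathrm{Id} - \wt{D}_{\wt{y}} Q_{\wt{y}}$ with (up to sign) the single off-diagonal block $A_y Q'_{\wt{y}}$ via the triangular/diagonal block structure, then chain the two already-established norm estimates $\|A_y(\cdot)\|_{L_\lambda} \le c_1\lambda^{-1}\|\cdot\|_{W_\lambda}$ and $\|Q'_{\wt{y}}\|_{L_\lambda\to W_\lambda}\le c_2$ to get the bound $c_1 c_2/\lambda$. The only cosmetic issue is that the ordering of the factors in your decomposition $({\mc E}_L,{\mc E}_T)$ is flipped relative to the matrix convention used in \eqref{equation323} (which orders tangent then longitudinal), but this does not affect the estimate; the argument is the paper's argument, just spelled out more explicitly.
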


\begin{proof}
Note that ${\rm Id} - \wt{D}_{\wt{y}} Q_{\wt{y}} = A_y \circ Q_{\wt{y}}'$ and 
\begin{align}
 \left\| A_{\wt{y}} Q'_{\wt{y}}(h_1 \nabla \mu, h_2) \right\|_{L_\lambda} \leq {c_1\over \lambda}  \left\| Q'_{\wt{y}} (h_1 \nabla \mu, h_2) \right\|_{W_\lambda} \leq {c_1 c_2 \over \lambda} \left\| (h_1 \nabla \mu, h_2) \right\|_{L_\lambda}.
\end{align}
\end{proof}

Hence $\wt{D}_{\wt{y}} Q_{\wt{y}}$ is invertible and a right inverse can be constructed as
\begin{align}
\wt{Q}_{\wt{y}} = Q_{\wt{y}} \left( \wt{D}_{\wt{y}} Q_{\wt{y}} \right)^{-1}: L_\lambda \to W_\lambda.
\end{align}
It is uniformly bounded by some constant which is independent of $\lambda$.

Now since we want to apply the implicit function theorem (cf. \cite[Appendix A]{McDuff_Salamon_2004}), we need to trivialize the Banach manifold ${\mc B}$ and the Banach space bundle ${\mc E}$ locally near $\wt{y}$. We identify a neighborhood of $\wt{y}$ in ${\mc B}$ with a small ball (with respect to the $W_\lambda$-norm) centered at the origin of $T_{\wt{y}} {\mc B}$ as follows: using the identification $U_\epsilon \simeq \mu^{-1}(0) \times (-\epsilon, \epsilon)$, for any $\wt{W} = (W, a \nabla \mu, h) \in T_{\wt{y}} {\mc B}$, we identify it with the map $\rho(\wt{W})= (\ov{\exp}_{\wt{y}} W, a, \zeta(y) + h )$ into $\mu^{-1}(0)\times (-\epsilon, \epsilon) \times {\mb R} $ where $\ov{\exp}$ is the exponential map inside $\mu^{-1}(0)$. We trivialize ${\mc E}$ over such a neighborhood around $\wt{y}$ by parallel transport along the radial  geodesics, which is denoted by $\Phi_{\wt{W}}: {\mc E}_{\rho(\wt{W})} \to {\mc E}_{\wt{y}}$. Then for $\wt{W}\in T_{\wt{y}} {\mc B}$ with $\left\| \wt{W}\right\|_{W_\lambda}$ small, denote by $\wt{D}_{\wt{W}}: T_{\wt{y}} {\mc B}\to {\mc E}_{\wt{y}}$ the linearization of $ \Phi_{\wt{W}} \circ \wt{S}^\lambda( \rho(\wt{W}))$ at $\wt{W}$. We have
\begin{lem}
There exists $\epsilon_3, c_3>0$, independent of $\lambda$, such that for all $\wt{W}\in T_{\wt{y}}{\mc B}$ with $\left\| \wt{W} \right\|_{W_\lambda} < \epsilon_3$, one has
\begin{align}\label{eqn432}
 \left\| \wt{D}_{\wt{W}} - \wt{D}_{\wt{y}} \right\| \leq c_3 \left\| \wt{W}\right\|_{W_\lambda}.
\end{align}
\end{lem}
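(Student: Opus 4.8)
The statement to prove is a quantitative continuity estimate for the linearization of the nonlinear section $\wt S^\lambda$: moving away from the approximate solution $\wt y$ by a vector $\wt W$ of small $W_\lambda$-norm changes the linearized operator by at most $c_3\|\wt W\|_{W_\lambda}$, with $c_3$ independent of $\lambda$. This is the standard ``quadratic estimate'' ingredient needed to run the Newton--Picard / implicit function scheme of \cite[Appendix A]{McDuff_Salamon_2004}.

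\medskip

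\emph{Plan of proof.} First I would write out $\Phi_{\wt W}\circ\wt S^\lambda(\rho(\wt W))$ in the local trivialization and differentiate in $\wt W$. The section $\wt S^\lambda(\wt x,\wt\eta)=\bigl(\tfrac{d\wt x}{dt}+\nabla f+\wt\eta\nabla\mu,\ \tfrac{d\wt\eta}{dt}+\lambda^2\mu(\wt x)\bigr)$ splits into a ``geometric'' part on $M$ and a scalar $\eta$-part. For the first component the terms $\nabla f$, $\nabla\mu$, and the Christoffel symbols produced by writing $\tfrac{d}{dt}$ in the parallel-transported frame are all smooth functions of the base point, so their derivatives in the base-point direction $W$ and in $a$ (the $\nabla\mu$-component) are bounded in $C^0$ along the precompact image of $y$; the $H$-variable enters only linearly through $h\nabla\mu$, so it contributes nothing to the second derivative. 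The key point is that the dependence of $\wt D_{\wt W}$ on $\wt W$ is genuinely $C^1$ with a locally uniform bound on the second derivative of the trivialized section, so $\|\wt D_{\wt W}-\wt D_{\wt y}\|\le (\sup\|D^2(\text{trivialized section})\|)\,\|\wt W\|$, where the sup is over a fixed small ball and is finite by compactness of $\mu^{-1}(0)$ and smoothness of all data.

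\medskip

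The second component $\tfrac{d\wt\eta}{dt}+\lambda^2\mu(\wt x)$ is where the $\lambda$-dependence could in principle blow up: its linearization is $h'+\lambda^2 d\mu(V)$, and differentiating $\lambda^2\mu(\wt x)$ twice in the base variable produces a term of size $\lambda^2\|D^2\mu\|\,|W|$. The resolution — and the step I expect to be the main obstacle — is that this term must be measured in the rescaled norm $L_\lambda$ of \eqref{eqn49}, which contains a factor $\lambda^{-1}$ on the $\eta$-component, and that $\|W\|$ on the domain side is measured in $W_\lambda$ of \eqref{eqn48}; one factor $\lambda^{-1}$ comes from the target norm and the remaining mismatch is absorbed by noting that $d\mu(V)$ only sees the $L$-component of $V$, which carries a factor $\lambda$ in $\|\cdot\|_{W_\lambda}$, so $\|h'+\lambda^2 d\mu(V)\|$-type variations reduce to $\lambda^{-1}\cdot\lambda^{-1}\cdot\lambda^2 = 1$ times $\|\wt W\|_{W_\lambda}$ times a constant. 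Making this bookkeeping precise — tracking exactly which component of $\wt W$ and of the output each derivative term lands in, and checking that every stray power of $\lambda$ cancels against the rescaled norms — is the real content; it is the same cancellation mechanism already exploited in the preceding lemmas bounding $A_y$ and $Q'_{\wt y}$.

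\medskip

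Finally I would assemble the pieces: choose $\epsilon_3$ small enough that $\rho$ maps the $\epsilon_3$-ball in $(T_{\wt y}{\mc B},\|\cdot\|_{W_\lambda})$ into the coordinate chart $U_\epsilon\simeq\mu^{-1}(0)\times(-\epsilon,\epsilon)$ and the radial geodesics defining $\Phi_{\wt W}$ stay within the injectivity radius (this can be done uniformly in $\lambda$ since shrinking the $W_\lambda$-ball only shrinks the relevant pointwise $C^0$-ball), then set $c_3$ to be the resulting uniform bound on the second derivative of the trivialized section expressed in the $(W_\lambda, L_\lambda)$ norms, and apply the mean value theorem on the segment from $0$ to $\wt W$ to get \eqref{eqn432}. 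The independence of $c_3$ and $\epsilon_3$ from $\lambda$ is exactly the assertion that all the $\lambda$-powers have cancelled, which is guaranteed by the two norm rescalings.
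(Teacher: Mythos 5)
You have correctly identified the potential danger — the $\lambda^2\mu(\wt x)$ term in the second component could produce a $\lambda^2 D^2\mu$ contribution to the difference of linearizations — but the mechanism you propose for cancelling it is wrong, and in fact the naive exponential trivialization would \emph{not} give a $\lambda$-uniform estimate. The relevant object is the second derivative $D^2\mu(W,V)$, not $d\mu(V)$; and while $d\mu$ vanishes on $L^\perp$, the Hessian $D^2\mu$ does not. In particular $D^2\mu(W_T,V_T)$, where both arguments are transverse to $\nabla\mu$, is essentially the second fundamental form of $\mu^{-1}(c)$ and has no reason to vanish. If you measure $\lambda^2 D^2\mu(W_T,V_T)$ in $L_\lambda$ you pick up exactly one factor of $\lambda^{-1}$ from the target norm, and neither $W_T$ nor $V_T$ carries the compensating factor of $\lambda$ you are counting on (they live in the unrescaled transverse slot of $W_\lambda$). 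The result is a bound of order $\lambda\|\wt W\|_{W_\lambda}\|V\|_{W_\lambda}$, which blows up as $\lambda\to\infty$, so your power-count $\lambda^{-1}\cdot\lambda^{-1}\cdot\lambda^2=1$ is not correct.

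The actual resolution in the paper is structural, not a cancellation of powers: the trivialization $\rho$ is built from the product identification $U_\epsilon\simeq\mu^{-1}(0)\times(-\epsilon,\epsilon)$, and it sends $\wt W=(W,a\nabla\mu,h)$ to the point whose \emph{second coordinate is exactly $a$}. Consequently $\mu\circ\rho(\wt W)=a$ is a linear function of $\wt W$, i.e.\ $D^2\mu\equiv0$ in this chart. The second component of the trivialized section $\frac{d}{dt}(\zeta(y)+h)+\lambda^2 a$ is therefore affine-linear in $\wt W$, its linearization is constant in $\wt W$, and the $\lambda^2$ coefficient never appears in $\wt D_{\wt W}-\wt D_{\wt y}$. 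There is simply no bad term to estimate. The paper also splits the argument into two cases (first $\wt W$ with vanishing $\mb R$-component, then $\wt W=(0,h_0)$), because the Sobolev bound $\|\wt W\|_{L^\infty}\lesssim\|\wt W\|_{W_\lambda}$ needed for the first (geometric) component fails to be $\lambda$-uniform on the $\mb R$-slot; your proposal does not acknowledge this subtlety either. To fix your proof, you must work in the paper's chart, not the metric exponential chart of $M\times\mb R$, and then the quadratic estimate for the first component plus the affine-linearity of the second component gives \eqref{eqn432} directly.
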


\begin{proof}
First consider the case when the ${\mb R}$-component of $\wt{W}$ is zero. We denote by $\wt{S}_1$ the first component of $\wt{S}^\lambda$, which is independent of $\lambda$. Then we see 
\begin{align}\label{eqn433}
\left\| D\wt{S}_1(\wt{W}) - D\wt{S}_1(\wt{y})\right\| \leq c \left\| \wt{W}\right\|_{L^\infty}
\end{align}
which is standard. Now because the ${\mb R}$-component of $\wt{W}$ is zero, we have $\left\| \wt{W}\right\|_{L^\infty} \leq c \left\| \wt{W}\right\|_{W_\lambda}$ by our definition of the $W_\lambda$-norm (\ref{eqn48}).

The second component of $\wt{S}^\lambda$ is denoted by $\wt{S}^\lambda_2$. Then we see in this case, for any $\wt{V}= (V, h) \in T_{\wt{y}} {\mc B}$
\begin{align}\label{eqn434}
D\wt{S}^\lambda_2( \wt{W}) (\wt{V}) =\lambda^2 h. 
\end{align}
Hence the variation of the derivative of $\wt{S}^\lambda_2$ is always zero.

Now we consider the case $\wt{W}= (0, h_0)$ with $h_0\in W^{1, 2}({\mb R})$. Then for any $\wt{W}_0 \in T_{\wt{y}} {\mc B}$ small, by our definition of our norms (\ref{eqn48}) and (\ref{eqn49}),
\begin{multline}\label{eqn435}
\left\| (\wt{D}_{\wt{W} + \wt{W}_0} - \wt{D}_{\wt{W}_0})(V, h)\right\|_{L_\lambda}  = \left\| h_0 \nabla_V \nabla \mu \right\|_{L_\lambda} = \left\| h_0 \nabla_V \nabla \mu\right\|_{L^2} \\
\leq c \left\| \wt{W}\right\|_{W_\lambda} \left\| V\right\|_{L^2}  \leq c \left\| \wt{W}\right\|_{W_\lambda} \left\| (V, h) \right\|_{W_\lambda}
 \end{multline}
for some $c>0$. 

Combining (\ref{eqn433}), (\ref{eqn434}) and (\ref{eqn435}) we obtain (\ref{eqn432}).
\end{proof}

On the other hand, we see
\begin{align}
 \left\| \wt{S}^\lambda( \wt{y}) \right\|_{L_\lambda} = \lambda^{-1} \left\| {d\over dt} \zeta(y)\right\|_{L^2} \leq c_4 \lambda^{-1}
\end{align}
for some constant $c_4>0$. By the implicit function theorem, there exists $\epsilon_4>0$ such that for sufficiently large $\lambda$, there exists a unique $\wt{W}_\lambda(y) \in T_{\wt{y}} {\mc B}$ satisfying
\begin{align}
 \left\| \wt{W}_\lambda(y) \right\|_{W_\lambda} \leq \epsilon_4,\ \wt{W}_\lambda(y) \in {\rm Im} \, Q_{\wt{y}},\ \wt{S}^\lambda( \exp_{\wt{y}} \wt{W}_\lambda(y) ) =0.
\end{align}
Moreover, there exists $c_5>0$ such that
\begin{align}
 \left\| \wt{W}_\lambda(y) \right\|_{W_\lambda} \leq {c_5 \over \lambda}.
\end{align}
We hence define the {\it gluing map} (for large $\Lambda_0$) to be
\begin{align}
\begin{array}{cccc}
 \Phi: & [\Lambda_0, +\infty) \times \wt{\mc N}^\infty(p_-, p_+)  & \to  &  \cup_{\lambda \geq \Lambda_0} \wt{\mc M}^\lambda( p_-, p_+)\\
       &   \left( \lambda, y \right) & \mapsto & \exp_{\wt{y}} \wt{W}_\lambda(y).
\end{array}
\end{align}

\begin{thm}\label{thm36} For each $\lambda \in [\Lambda_0, +\infty) \cap \Lambda^{reg}$, the restriction of $\Phi$ to $\{ \lambda \} \times \wt{\mc N}^\infty (p_-, p_+)$ is a homeomorphism onto $\wt{\mc M}^\lambda(p_-, p_+)$.
\end{thm}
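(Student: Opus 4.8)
The plan is to establish three properties of $\Phi^\lambda := \Phi|_{\{\lambda\}\times\wt{\mc N}^\infty(p_-,p_+)}$ for $\lambda$ large in $\Lambda^{reg}$: that it is continuous, that it is an open map, and that it is a bijection onto $\wt{\mc M}^\lambda(p_-,p_+)$. Since a continuous open bijection is a homeomorphism, this is enough, and the inverse is then automatically continuous.

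Continuity and the fact that $\Phi^\lambda$ maps into $\wt{\mc M}^\lambda(p_-,p_+)$ are essentially built into the construction: $\wt S^\lambda(\Phi^\lambda(y)) = 0$ by definition, and $\Phi^\lambda(y)\in{\mc B}$ has the right asymptotics at $\pm\infty$, while the uniqueness clause of the implicit function theorem — together with the continuous dependence of the approximate solution $\wt y = (y,\zeta(y))$, the approximate right inverse $Q_{\wt y}$, and the constants in the lemmas above on $y\in\wt{\mc N}^\infty(p_-,p_+)$ — makes $y\mapsto\wt W_\lambda(y)$, and hence $\Phi^\lambda$, continuous. For openness I would argue as follows. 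For $\lambda$ large the linearization of $\wt S^\lambda$ at $\Phi^\lambda(y)$ is a small perturbation, in the $W_\lambda$, $L_\lambda$ norms, of $\wt D_{\wt y}$, which is surjective because $\ov D_y$ is surjective by Hypothesis (A5) and $D'_{\wt y}$ is invertible for large $\lambda$, with $\ker\wt D_{\wt y}\cong\ker\ov D_y$; hence $\Phi^\lambda(y)$ is a regular point and near it $\wt{\mc M}^\lambda(p_-,p_+)$ is a manifold of the same dimension as $\wt{\mc N}^\infty(p_-,p_+)$ near $y$. Continuity and injectivity of $\Phi^\lambda$ together with invariance of domain then show $\Phi^\lambda$ is a local homeomorphism, in particular an open map.

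Injectivity I would deduce from the estimate $\|\wt W_\lambda(y)\|_{W_\lambda}\le c_5\lambda^{-1}$: if $\Phi^\lambda(y_1) = \Phi^\lambda(y_2)$, then for $\lambda$ large this common orbit is $W_\lambda$-close to both $\wt y_1$ and $\wt y_2$, hence $\wt y_1$ and $\wt y_2$ are close in a fixed norm; since $y\mapsto\wt y$ is an embedding and $\Phi^\lambda$ is already injective near each point, this forces $y_1 = y_2$. Surjectivity is the main point, and I would prove it by contradiction using the compactness Proposition \ref{prop33}. If surjectivity failed, there would be $\lambda_\nu\to\infty$ in $\Lambda^{reg}$ and $\wt p_\nu\in\wt{\mc M}^{\lambda_\nu}(p_-,p_+)$ outside the image of $\Phi^{\lambda_\nu}$. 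Proposition \ref{prop33} gives, along a subsequence, a broken orbit $([y_i])_{i=1}^n\in\ov{\mc N}^\infty(p_-,p_+)$, shift parameters $t_{i,\nu}$, the estimate $\|(\mu(\wt x_\nu),\wt\eta_\nu-\zeta(\wt x_\nu))\|_{W_{\lambda_\nu}}\le c_0\lambda_\nu^{-1}$, and $C^1_{loc}$-convergence $\wt x_\nu(t_{i,\nu}+\cdot)\to y_i$. When $n = 1$ — which is automatic when $\mathrm{index}(p_-,{\mc F})-\mathrm{index}(p_+,{\mc F})\le 1$, the case used to build the chain complex, and in general on any component of $\wt{\mc M}^\lambda(p_-,p_+)$ whose compactification adds no broken orbit — I would upgrade this to: after a small reparametrization $y_1'$ of $y_1$, $\wt p_\nu(t_{1,\nu}+\cdot) = \exp_{\wt y_1'}\wt W$ with $\wt W\in\mathrm{Im}\,Q_{\wt y_1'}$ and $\|\wt W\|_{W_{\lambda_\nu}}$ small; the uniqueness in the implicit function theorem then forces $\wt W = \wt W_{\lambda_\nu}(y_1')$, so $\wt p_\nu$ lies in the image of $\Phi^{\lambda_\nu}$, a contradiction. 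For $n\ge2$ one runs the same argument near each piece after an iterated, multi-orbit version of the pre-gluing.

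The hard part will be that last upgrade — passing from the weak $C^1_{loc}$-convergence of Proposition \ref{prop33} to genuine $W_\lambda$-closeness of a translate of $\wt p_\nu$ to an exact approximate solution $\wt y_1'$. This requires controlling $\wt p_\nu$ on all of $\mb R$, in particular its exponential approach to the critical points $p_\pm$ rather than merely on compact intervals, and it requires handling the $\lambda$-dependent norms $W_\lambda$, $L_\lambda$ carefully so that the quantitative hypotheses of the implicit function theorem hold uniformly in $\nu$. The non-compactness of $\wt{\mc M}^\lambda(p_-,p_+)$ — both the free $\mb R$-action and the possibility of orbit breaking — is exactly what forces the sequential contradiction setup and the multi-orbit gluing when $n\ge2$.
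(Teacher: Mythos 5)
Your proof takes essentially the same route as the paper's for the one step the paper bothers to prove — surjectivity by contradiction using Proposition~\ref{prop33} together with the uniqueness clause of the implicit function theorem, after a small time shift so that the decomposition of $\wt p_i$ along the approximate solution lands in ${\rm Im}\,Q_{\wt y}$. The paper declares continuity, injectivity, and openness to follow from the construction without comment, restricts to index difference $1$ as you note is sufficient, and asserts the bound $\|\wt W_i\|_{W_{\lambda_i}}\le c_0\lambda_i^{-1}$ directly as a consequence of Proposition~\ref{prop33}; the step you single out as the ``hard part'' (upgrading the $C^1_{loc}$ convergence of the tangent-to-$\mu^{-1}(0)$ component to a quantitative $W_\lambda$ bound, which ultimately uses the transversality hypothesis (A5) and the $L^2$ estimate on the approximate-gradient-flow error inside the proof of Proposition~\ref{prop33}) is glossed over by the paper in exactly the same way.
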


\begin{proof}
To complete the proof of this theorem, it remains to prove the local surjectivity of the gluing map $\Phi$. For simplicity, we only prove this for the case in which the Morse indices of $p_-$ and $p_+$ differ by one, which is enough for our purpose. If, in this case, the gluing map is not locally surjective, then there exists a sequence $\lambda_i \to \infty$ and $\wt{p}_i \in \wt{\mc M}^{\lambda_i} (p_-, p_+)$ which converges to some $\wt{y}\in \wt{\mc N}^\infty(p_-, p_+)$ as described by Proposition \ref{prop33} (with $n =1$ and $t_{1, \nu} = 0$) but don't lie in the range of $\Phi$. Consider the hyperplanes $N(y(s)) = (y'(s))^{\bot} \subset T_{y(s)} M$ for $s\in {\mb R}$. For each $i$, there exists $s_i\in {\mb R}$ such that $s_i \to 0$ and  
\begin{align}
 \wt{p}_i(0) \in \left\{(\exp_{y(s_i)} V, \eta) \ |\  \eta\in {\mb R},\ V\in N(y( s_i )),\ \|V\|\leq \epsilon \right\}.
\end{align}
Then if we write $\wt{p}_i(t) = \exp_{\wt{y}(s_i + t)} \wt{W}_i(t) $, then $\wt{W}_i \in {\rm Im} \, Q_{\wt{y}( s_i + \cdot)}$ and $\left\| \wt{W}_i\right\|_{W_{\lambda_i}} \leq c_0 \lambda_i^{-1}$ by Proposition \ref{prop33}. By the uniqueness part of the implicit function theorem, this implies that $\wt{p}_i = \Phi( \lambda_i, \wt{y}(s_i + \cdot) )$, which contradicts our assumption.
\end{proof}

Now Theorem \ref{thm31} is an immediate consequence. 

\subsection{Proof of Theorem \ref{thm31} using geometric singular perturbation theory\label{sec:gsptpf}}

In the system (\ref{eqn15a})--(\ref{eqn15b}), in which the time variable is $t$, we make the rescalings
$$
\eta = \frac{\rho}{\epsilon}, \quad \lambda = \frac{1}{\epsilon}, \quad t=\epsilon\tau.
$$
The system becomes
\begin{align}
\frac{dx}{d\tau} &= -\left(\epsilon\nabla f(x) + \rho \nabla \mu(x)\right), \label{s11} \\
\frac{d\rho}{d\tau}&= -\mu(x). \label{s12}
\end{align}
Studying the limit $\lambda\to\infty$ in (\ref{eqn15a})--(\ref{eqn15b}) is equivalent to studying the limit $\epsilon\to0$ in  (\ref{s11})--(\ref{s12}).
Setting $\epsilon=0$ in  (\ref{s11})--(\ref{s12}), we obtain
\begin{align}
\frac{dx}{d\tau} &= - \rho \nabla \mu(x), \label{s21} \\
\frac{d\rho}{d\tau}&= -\mu(x). \label{s22}
\end{align}
Let $N$ denote the submanifold of $M$ defined by $\mu=0$.  Then
the set of equilibria of  (\ref{s21})--(\ref{s22}) is the set $\mu=\rho=0$, i.e., $N_0=N \times \{0\} \subset M \times {\mb R}$.  $N_0$  is a  compact codimension-two submanifold  of $M \times {\mb R}$. 

Choosing coordinates on $M$ and linearizing (\ref{s21})--(\ref{s22}) at an equilibrium $(x,0)$, we obtain
\begin{equation}
\label{mat}
\begin{pmatrix}
\dot v  \\
\dot \rho 
\end{pmatrix}
=
\begin{pmatrix}
 0     &    -\nabla \mu(x) \\
 -d\mu(x)     &  0
\end{pmatrix}
\begin{pmatrix}
v  \\
\rho 
\end{pmatrix}.
\end{equation}
Since 0 is a regular value of $\mu$ by (A3), $d\mu(x)$ and $\nabla \mu(x)$ are nonzero row and column vectors respectively.  Therefore 
the matrix in \eqref{mat} has rank 2, so it has at least $n-2$ zero eigenvalues.  (Of course this is a consequence of the fact that we linearized  (\ref{s21})--(\ref{s22}) at a point on a manifold of equilibria of dimension $n-2$.)   The other eigenvalues are $\pm \|\nabla \mu(x) \|$, where $\| v \| = \| v \|_{g(x)}$; this is shown by the following calculations, which use the fact
$$
d\mu(x) v = \left< \nabla \mu(x), v \right>_{g(x)}.
$$
\begin{align*}
\begin{pmatrix}
 0     &    -\nabla \mu(x) \\
 -d\mu(x)     &  0
\end{pmatrix}
\begin{pmatrix}
\nabla \mu(x)  \\
-\|\nabla \mu(x) \|
\end{pmatrix}
&=
\begin{pmatrix}
\|\nabla \mu(x) \| \nabla \mu(x)  \\
-\| \nabla \mu(x) \|^2
\end{pmatrix}, \\
\begin{pmatrix}
 0     &    -\nabla \mu(x) \\
 -d\mu(x)     &  0
\end{pmatrix}
\begin{pmatrix}
\nabla \mu(x)  \\
\| \nabla \mu(x) \|
\end{pmatrix}
&=
\begin{pmatrix}
-\| \nabla \mu(x) \| \nabla \mu(x)  \\
-\| \nabla \mu(x) \|^2
\end{pmatrix}. 
\end{align*}
Therefore $N\times\{0\}$ is a compact normally hyperbolic manifold of equilibria for  (\ref{s11})--(\ref{s12}) with $\epsilon=0$ \cite{Jones_GSPT}.
It follows that for small $\epsilon>0$, (\ref{s11})--(\ref{s12}) has a normally hyperbolic invariant manifold $N_\epsilon$ near $N_0$.  

Locally we may assume the coordinates on $M$ are chosen so that $\mu=x_n$.  Let $y=(x_1,\ldots,x_{n-1})$, so $x=(y,x_n)$.   Then locally $N_\epsilon$ is parameterized by $y$ and is given by
$$
x_n=x_n(y,\epsilon)=x_n(x_1,\ldots,x_{n-1},\epsilon), \quad \rho=\rho(y,\epsilon)=\rho(x_1,\ldots,x_{n-1},\epsilon),
$$
with $x_n(y,0)=\rho(y,0)=0$.  After division by $\epsilon$,  the system \eqref{s11}--\eqref{s12} restricted to $N_\epsilon$ is given by  
$$
\dot y = -\nabla_yf(y,0)+{\mc O}(\epsilon) ,
$$
where $\nabla_yf(y,x_n)$ denotes the first $n-1$ components of $\nabla f(y,x_n)$.

It follows that the system \eqref{s11}--\eqref{s12} restricted to $N_\epsilon$ is a perturbation of the negative gradient flow of $(f|_{\mu^{-1}(0)}, g|_{\mu^{-1}(0)})$.  Since $(f|_{\mu^{-1}(0)}, g|_{\mu^{-1}(0)})$ is Morse-Smale by (A5), its negative gradient flow is structurally stable.  Therefore, for small $\epsilon>0$, the flow of \eqref{s11}--\eqref{s12} restricted to $N_\epsilon$ is topologically equivalent to the negative gradient flow of $(f|_{\mu^{-1}(0)}, g|_{\mu^{-1}(0)})$. 

An equilibrium $x$ of  the negative gradient flow of $(f|_{\mu^{-1}(0)}, g|_{\mu^{-1}(0)})$ corresponds, for each $\lambda>0$, to an equilibrium $(x,\eta)$ of (\ref{eqn15a})--(\ref{eqn15b}), which turn corresponds to the equilibrium $(x,\epsilon\eta)$ of (\ref{s11})--(\ref{s12}); the latter lies in $N_\epsilon$, which contains all complete orbits nearby.  It has one higher index than that of $x$ for the negative gradient flow of $(f|_{\mu^{-1}(0)}, g|_{\mu^{-1}(0)})$.  (The reason is that one of the transverse eigenvalues computed above is positive.)  If the only connections between these equilibria are those in $N_\epsilon$, the resulting Morse-Smale-Witten chain complex  is the same as that of $(f|_{\mu^{-1}(0)}, g|_{\mu^{-1}(0)})$ with degree shifted by one.

To rule out the existence of other connections, note that for $\epsilon>0$, the energy $E_\epsilon(x,\rho)=\epsilon f(x)+\rho\mu(x)$ decreases along solutions of (\ref{s11})--(\ref{s12}).  For small $\epsilon>0$, the energy difference between two equilibria $(x_-,\epsilon\eta_-)$ and $(x_+,\epsilon\eta_+)$ of (\ref{s11})--(\ref{s12}) is of order $\epsilon$.  

On the other hand, by the normal hyperbolicity of $N_0$, any sufficiently small neighborhood $V$ of $N_0$ has the property that for (\ref{s11})--(\ref{s12}) with $\epsilon$ small, a solution of (\ref{s11})--(\ref{s12}) that starts in  $V \setminus N_\epsilon$ must leave $V$ in forward or backward time.  Therefore a solution of (\ref{s11})--(\ref{s12}) that connects two equilibria but does not lie in $N_\epsilon$ must at some time leave $V$ through its boundary.  If we can show that it must do so at a point where $E_\epsilon$ is of order one, we have a contradiction.

For a small $\alpha>0$, we can take $V=\{(x,\rho) : |\mu(x)|<\alpha, |\rho|<\alpha, |\rho\mu(x)|)<\frac{\alpha^2}{4} \}$; see Figure \ref{fig:alpha_rho}.  At points on the boundary where $|\rho\mu(x)|=\frac{\alpha^2}{4}$, $|E_\epsilon|$ is close to $\frac{\alpha^2}{4}$ for small $\epsilon$.  Thus if we can show that a solution that connects two equilibria must leave $V$ through such a point, we are done.

\begin{figure}[htbp]
\centering
\includegraphics[scale=0.70]{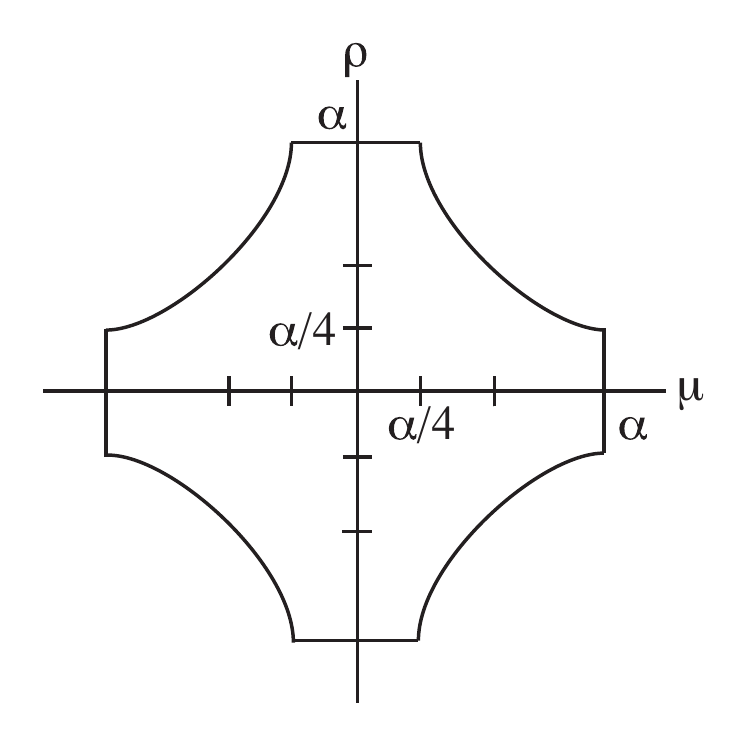}
\caption{The set $V$. }
\label{fig:alpha_rho}
\end{figure}

This is not true, but by making a small $\epsilon$-dependent alteration in $V$, we can make it true: we replace the portions of $\partial V$ on which $\mu=\pm\alpha$ or $\rho=\pm\alpha$ by nearby invariant surfaces, so that solutions cannot cross them.  More precisely, we replace the portion of the boundary on which $\mu=\pm\alpha$ by a union of integral curves of (\ref{s11})--(\ref{s12}) that start on the codimension-two surfaces $\mu=\pm\alpha$, $\rho=0$, and we replace the portion of the boundary on which $\rho=\pm\alpha$ by a union of integral curves of (\ref{s11})--(\ref{s12}) that start on the codimension-two surfaces $\mu=0$, $\rho=\pm\alpha$.  Details are left to the reader.

\section{Fast-slow system associated with the $\lambda \to 0$ limit\label{section4}}

Now we consider the limit of (\ref{eqn15a})--(\ref{eqn15b})  as $\lambda \to 0$. In this limit,  (\ref{eqn15a})--(\ref{eqn15b}) is a fast-slow system  \cite{Jones_GSPT}. There is one slow variable, $\eta$.  In this section, we will identify the slow manifold, and study the fast flow and slow equation, under appropriate generic assumptions.  We will also relate various Morse indices. 

\subsection{The slow manifold}

The {\em slow manifold} for (\ref{eqn15a})--(\ref{eqn15b}) is the set of equilibria for $\lambda=0$:
$$
{\mc C}_{\mc F} = \left\{ (x, \eta)\in M\times {\mb R} \;|\;  \nabla f(x) + \eta \nabla \mu(x) =0  \right\}.
$$
We assume:
\begin{enumerate}
\item[(A6)] ${\rm Crit}(f) \cap {\rm Crit }(\mu)$ is empty. 
\item[(A7)] $\nabla f +\eta\nabla \mu$ is a transverse section of $\pi^* TM$, where $\pi: M \times \mb R \to M$ is the projection. 
\end{enumerate}
(A7) implies that  ${\mc C}_{\mc F}$ is a 1-dimensional smooth submanifold of $M\times {\mb R}$. Indeed, it is noncompact and its end at infinity is asymptotic to $\left( {\rm Crit}\mu \times [R, +\infty) \right) \cup \left( {\rm Crit}\mu  \times (-\infty, -R]\right)$ for $R$ large. This can be seen as follows. If there is a sequence $(x_i, \eta_i)\in {\mc C}_{\mc F}$ with $\lim_{i \to \infty} |\eta_i| = \infty$, then we see that $\nabla \mu( x_i) \to 0$ and a subsequence of $x_i$ converging to some $y \in {\rm Crit}\mu$. Conversely, for any $y \in {\rm Crit}\mu$, let $S_\epsilon(y)\subset M$ denote the sphere of radius $\epsilon$ around $y$. Since by (A2) $y$ is a nondegenerate critical point of $\mu$, we see that for small $\epsilon>0$, the map
\begin{align}
S_\epsilon(y)\to S(T_y M),\ x\mapsto {\nabla\mu(x)\over \| \nabla\mu(x)\| }
\end{align}
has degree $\pm 1$. On the other hand, by (A6) we can use local coordinates near $y$ in which the vector field $\nabla f$  is constant. Hence there exist a unique pair of points $(x_\epsilon^\pm, \eta_\epsilon^\pm)$, with $x_\epsilon^\pm\in S_\epsilon(p)$, such that 
\begin{align}
(x_\epsilon^\pm, \eta_\epsilon^\pm)\in {\mc C}_{\mc F}.
\end{align}
We can order the two points so that $\lim_{\epsilon \to 0} \eta_\epsilon^\pm = \pm\infty$.

For fixed $\eta$, denote the function $f_{\eta}= f + \eta \mu: M \to {\mb R}$.  Then consider the function
\begin{align}
\begin{array}{cccc}
d_{\mc C}: & {\mc C}_{\mc F} & \to & {\mb R}\\
      & p=(x, \eta) & \mapsto & \det {\rm Hess} f_\eta (x).
\end{array}
\end{align}
We assume
\begin{enumerate}
\item[(A8)] 0 is a regular value of $d_{\mc C}$.
\item[(A9)] For any $p =(x,\eta) \in d_{\mc C}^{-1}(0)$, $\mu(x)\neq0$. Therefore $d_{\mc C}^{-1}(0) \cap {\rm Crit} ({\mc F}) = \emptyset$.
\end{enumerate}
We denote ${\mc C}_{\mc F}^{sing}= d_{\mc C}^{-1}(0)$. From (A8), as $(x, \eta)\in{\mc C}_{\mc F}$ crosses a point in ${\mc C}_{\mc F}^{sing}$, one and only one of the eigenvalues of ${\rm Hess} f_\eta(x)$ changes its sign. 

By the implicit function theorem, ${\mc C}_{\mc F}$ can be smoothly parameterized by $\eta$ near any point of ${\mc C}_{\mc F} \setminus {\mc C}_{\mc F}^{sing}$ as $x=x(\eta)$. On the other hand, near  ${\mc C}_{\mc F}^{sing}$ we have the following result.

\begin{prop}
\label{prop:singpar}
Let  $p=(x_p,\eta_p) \in {\mc C}_{\mc F}^{sing}$.  We can choose local coordinates $(x_1, \ldots, x_n)$ on $M$ such that $x_p$ corresponds to $(0,\ldots,0)$, and near $p$, ${\mc C}_{\mc F}$ is parameterized by $x_n$.  Moreover, for some $c\neq0$,
\begin{equation}
\label{eq:singpar}
\eta= \eta_p + cx_n^2 + O(x_n^3).
\end{equation}
Thus $\eta |_{{\mc C}_{\mc F}}$ has a nondegenerate critical point at $p$.
\end{prop}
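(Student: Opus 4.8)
The plan is to realize $\mathcal{C}_{\mathcal{F}}$ locally as the zero set, in a coordinate chart on $M$, of the $\mathbb{R}^n$-valued map $(x,\eta)\mapsto df(x)+\eta\,d\mu(x)$ (which has the same zeros, and the same transversality at its zeros, as the section $\nabla f+\eta\nabla\mu$ appearing in (A7)), to use the implicit function theorem to express $\eta$ together with all coordinates transverse to $\ker\mathrm{Hess}\,f_{\eta_p}(x_p)$ in terms of the one remaining coordinate $x_n$, and then to read off the quadratic Taylor coefficient of $\eta$ by differentiating the defining equations twice. Throughout, $\partial_n$ denotes $\partial/\partial x_n$.

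First I would choose a chart $(x_1,\dots,x_n)$ centered at $x_p$ whose linear part brings the bilinear form $\mathrm{Hess}\,f_{\eta_p}(x_p)$ into the block form $A\oplus 0$, with $A$ an invertible symmetric $(n-1)\times(n-1)$ matrix and the (one-dimensional) kernel spanned by $\partial/\partial x_n$; this is possible because, by the discussion following (A8), exactly one eigenvalue of $\mathrm{Hess}\,f_{\eta_p}(x_p)$ vanishes at a point of $\mathcal{C}_{\mathcal{F}}^{sing}$. Writing $\eta=\eta_p+s$ and $F_i(x,s)=\partial_i f_{\eta_p}(x)+s\,\partial_i\mu(x)$, the Jacobian of $F=(F_1,\dots,F_n)$ at the origin is the $n\times(n+1)$ matrix obtained by adjoining to $\mathrm{Hess}\,f_{\eta_p}(x_p)$ the column with entries $\partial_i\mu(x_p)$. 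Transversality of $\mathcal{C}_{\mathcal{F}}$, from (A7), forces this matrix to have rank $n$, which given the block structure is exactly the statement $m_n:=\partial_n\mu(x_p)\neq 0$; it also shows that the tangent line of $\mathcal{C}_{\mathcal{F}}$ at $p$ is the $x_n$-axis. The implicit function theorem, applied to the invertible $n\times n$ Jacobian of $F$ with respect to $(x_1,\dots,x_{n-1},s)$, then yields smooth $x_j=x_j(x_n)$ for $j<n$ and $s=s(x_n)$ with $x_j(0)=s(0)=0$ and, since the tangent is the $x_n$-axis, $x_j'(0)=s'(0)=0$. Hence $\eta|_{\mathcal{C}_{\mathcal{F}}}=\eta_p+s(x_n)=\eta_p+\tfrac12 s''(0)\,x_n^2+O(x_n^3)$, and the proposition reduces to proving $s''(0)\neq0$.

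To find $s''(0)$ I would differentiate the identity $F_i(x(x_n),s(x_n))\equiv 0$ twice in $x_n$ and set $x_n=0$, using $x_j'(0)=0$ for $j<n$, $x_n'(0)=1$, $s'(0)=0$; for each $i$ this produces
\[
\partial_n^2\partial_i f_{\eta_p}(x_p)+\sum_{j}\bigl(\mathrm{Hess}\,f_{\eta_p}(x_p)\bigr)_{ij}\,x_j''(0)+s''(0)\,\partial_i\mu(x_p)=0,
\]
and the $i=n$ equation, in which the $n$-th row of $\mathrm{Hess}\,f_{\eta_p}(x_p)$ vanishes, isolates $s''(0)=-\,\partial_n^3 f_{\eta_p}(x_p)/m_n$. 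The crux of the argument---and the step I expect to be the main obstacle---is therefore to show $\partial_n^3 f_{\eta_p}(x_p)\neq 0$, and this is precisely where (A8) is used. I would compute the derivative of $d_{\mathcal{C}}$ along $\mathcal{C}_{\mathcal{F}}$ at $p$ by Jacobi's formula $\tfrac{d}{d\tau}\det M(\tau)=\operatorname{tr}\!\bigl(\operatorname{adj}(M(\tau))\,M'(\tau)\bigr)$ with $M(x_n)=\mathrm{Hess}\,f_{\eta(x_n)}(x(x_n))$ in the chosen chart. Since $\operatorname{adj}(A\oplus 0)=(\det A)\,E_{nn}$, where $E_{nn}$ has single nonzero entry $1$ in position $(n,n)$, and since $M'(0)$ has entries $\partial_n\partial_i\partial_j f_{\eta_p}(x_p)$, this gives $\tfrac{d}{dx_n}d_{\mathcal{C}}\big|_{p}=(\det A)\,\partial_n^3 f_{\eta_p}(x_p)$. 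Because $\mathcal{C}_{\mathcal{F}}$ is parameterized by $x_n$ near $p$ and $0$ is a regular value of $d_{\mathcal{C}}$ by (A8), the left-hand side is nonzero; as $\det A\neq 0$, this forces $\partial_n^3 f_{\eta_p}(x_p)\neq 0$. Hence $s''(0)\neq 0$, so $\eta=\eta_p+cx_n^2+O(x_n^3)$ with $c=\tfrac12 s''(0)\neq0$, and $\eta|_{\mathcal{C}_{\mathcal{F}}}$ has a nondegenerate critical point at $p$.
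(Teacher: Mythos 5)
Your proof is correct. The paper establishes Proposition~\ref{prop:singpar} as a consequence of the more detailed Proposition~\ref{prop:singpar2}, which is proved by writing the system (\ref{eqn15a})--(\ref{eqn15b}) in coordinates adapted to the degenerate Hessian, carrying out a center manifold reduction, and asserting (without detailed justification) that ``(A7) implies $c\neq 0$ and (A8) implies $d\neq 0$.'' You choose essentially the same adapted chart, but instead of passing through the center manifold of the full $(x,\eta)$-system you apply the implicit function theorem directly to the defining map $(x,\eta)\mapsto d f(x)+\eta\, d\mu(x)$, solve for $(x_1,\dots,x_{n-1},\eta)$ in terms of $x_n$, and Taylor-expand. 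The genuinely new ingredient in your argument, relative to what the paper writes down, is the Jacobi-formula computation
\[
\frac{d}{dx_n}\,d_{\mathcal C}\Big|_p=(\det A)\,\partial_n^3 f_{\eta_p}(x_p),
\]
which makes fully explicit why (A8) forces the quadratic coefficient to be nonzero (the paper's $d=-\tfrac12\partial_n^3 f_{\eta_p}(x_p)$ and your $s''(0)=-\partial_n^3 f_{\eta_p}(x_p)/m_n$ are the same quantity up to the nonzero factor $c=m_n$). Your route is more elementary --- it avoids center manifold theory entirely --- and it supplies the justification the paper omits; the paper's route has the advantage of producing the normal form (\ref{cmdex})--(\ref{cmdeeta}) for the full perturbed system, which is needed later in the exchange-lemma analysis and is not recoverable from the parametrization of ${\mathcal C}_{\mathcal F}$ alone.

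One cosmetic remark: reusing the letter $c$ both for the coefficient in \eqref{eq:singpar} and for $\partial_n\mu(x_p)$ (as the paper also does, confusingly, across its two propositions) is worth flagging; in your argument these are distinct quantities, with the coefficient in \eqref{eq:singpar} equal to $\tfrac12 s''(0)=-\partial_n^3 f_{\eta_p}(x_p)/(2m_n)$.
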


This is a standard result, but we shall give a proof in Subsection \ref{sec:slow}.

\subsection{The fast flow\label{sec:ff}}

The fast flow $\Phi_t: M \times {\mb R}\to M \times {\mb R}$ of (\ref{eqn15a})--(\ref{eqn15b}) is the flow on $M\times {\mb R}$ determined by (\ref{eqn15a})--(\ref{eqn15b}) for $\lambda = 0$. The set of equillibria of the fast flow is just ${\mc C}_{\mc F}$, and along the fast flow, $\eta$ is constant.

If $p=(x_p,\eta_p) \in {\mc C}_{\mc F}$, then $x_p$ is a critical point of the function $f_{\eta_p} : M \to {\mb R}$. A {\it fast solution} from $p_-\in {\mc C}_{\mc F}$ to $p_+\in {\mc C}_{\mc F}$ is a solution $\wt{x}(t)$ of (\ref{eqn15a})--(\ref{eqn15b}) for $\lambda = 0$ such that $\lim_{t\to \pm\infty} \wt{x}(t) = p_\pm$. A {\it fast orbit} from $p_-$ to $p_+$ is an equivalence class of fast solutions from $p_-$ to $p_+$ modulo time translation. A fast solution or orbit is {\em trivial} if the orbit consists of a single point.

For $p= (x_p, \eta_p) \in {\mc C}_{\mc F}$, $x_p$ is a nondegenerate critical point if and only if $p\in  {\mc C}_{\mc F} \setminus  {\mc C}_{\mc F}^{sing}$.  Note that the Morse index of a nondegenerate critical point $x$ of $f_{\eta}$ can be defined as the number of negative eigenvalues of the matrix ${\rm Hess} f_\eta(x)$.  We define the Morse index of a degenerate critical point the same way.
We shall use the already-introduced notation ${\rm index}\, (x_p, f_{\eta_p})$ to denote the  Morse index in this sense of $x_p$ as a critical point of $f_{\eta_p}$.

${\rm Index}\, (x_p, f_{\eta_p})$ is locally constant on $ {\mc C}_{\mc F}\setminus{\mc C}_{\mc F}^{sing}$. Near $p \in {\mc C}_{\mc F}^{sing}$, the values of ${\rm index}\, (x_p, f_{\eta_p})$ on the two branches differ by one, and ${\rm index}\, (x_p, f_{\eta_p})$ is the lower of these two numbers.

\subsubsection{Transversality assumptions}

On ${\mc C}_{\mc F}\setminus {\mc C}_{\mc F}^{sing}$, the fast flow is normally hyperbolic. For any subset $\beta$ of ${\mc C}_{\mc F} \setminus {\mc C}_{\mc F}^{sing}$ define
\begin{align}
 W^u(\beta)=\left\{ (x, \eta)\in M \times {\mb R}\ |\  \lim_{t\to -\infty} \Phi_t(x, \eta) \in \beta \right\}
\end{align}
and
\begin{align}
  W^s(\beta)=\left\{ (x, \eta)\in M \times {\mb R}\ |\  \lim_{t\to +\infty} \Phi_t(x, \eta) \in  \beta \right\}.
\end{align}
Note that in this section we always have $\lambda=0$, so we will not use the notation of Subsection \ref{sec:msw} to specify a value of $\lambda$. If $\beta$ is connected, define ${\rm index}\, (\beta)$ to be ${\rm index}\, (x_p, f_{\eta_p})$ for any $p=(x_p, \eta_p)$ in $\beta$. 

We assume 
\begin{enumerate}
\item[(A10)]
$W^u\left( {\mc C}_{\mc F}\setminus {\mc C}_{\mc F}^{sing} \right)$ and $W^s\left( {\mc C}_{\mc F}\setminus {\mc C}_{\mc F}^{sing}\right)$ intersect transversally in $M\times {\mb R}$. 

\item[(A11)] For each $p = (x_p, \eta_p) \in {\rm Crit}({\mc F})$, the pair $(f_{\eta_p}, g)$ is a Morse-Smale pair on $M$.
\end{enumerate}
Assumption (A10) can be thought of as a weak version of the Morse-Bott-Smale transversality condition (see \cite{Morse_Bott_Homology}) for a Morse-Bott function and a metric. (A10) does not imply (A11).  However, (A11) implies that if $p = (x_p, \eta_p) \in {\rm Crit}({\mc F})$, then for any pair of critical points $y_-, y_+\in M$ of the function $f_{\eta_p}$, the unstable manifold of $y_-$ and the stable manifold of $y_+$ intersect transversally in $M$.  This implies that the transversality in (A10) holds for $\eta$ near $\eta_p$.

There are two special types of nontrivial fast orbits that play important roles in our construction. We introduce them in the remainder of this subsection.

\subsubsection{Handle-slides and cusp orbits}

By (A10) we see, if $\beta_1, \beta_2\subset {\mc C}_{\mc F}\setminus {\mc C}_{\mc F}^{sing}$ are connected, then
\begin{align}
{\rm dim}\, \left( W^u(\beta_1) \cap W^s(\beta_2)  \right) = {\rm index}\, (\beta_1)-{\rm index}\, (\beta_2) + 1.
\end{align}
In particular, when $ {\rm index}\, (\beta_1)={\rm index}\, (\beta_2)$, the dimension is one, so the intersection consists of discrete nontrivial fast orbits. We call such an orbit a {\em handle-slide}. If a handle-slide is contained in $M \times \{ \eta\}$, we say that a handle-slide happens at $\eta$.

We assume:
\begin{enumerate}
\item[(A12)] The composition $\pi: {\rm Crit} ({\mc F})\cup {\mc C}_{\mc F}^{sing} \hookrightarrow M \times {\mb R} \to {\mb R}$ is injective. Moreover, handle-slides happen at distinct values of $\eta$ that are not in the image of $\pi$. 
\end{enumerate}

Now we look near a point $p = (x_p, \eta_p) \in {\mc C}_{\mc F}^{sing}$. All the eigenvalues of ${\rm Hess} f_{\eta_p}(x_p)$ have absolute values great than a constant $a>0$, except for the one that changes its sign at $p$. Then we define $W^u_a(p)$ to be the space of maps $u: (-\infty, 0] \to M$ satisfying
\begin{enumerate}
\item $(u(t),\eta_p) = \Phi_t(u(0),\eta_p)$ for $t\leq 0$;
\item For sufficiently large $T$, for $t\in (-\infty, -T]$, 
$$
u(t) = \exp_{x_p} V(t), \quad V\in W^{1, 2}_a((-\infty, -T], T_{x_p} M ). 
$$
Here for any open subset $\Omega \subset {\mb R}$, the space $W^{1, 2}_a(\Omega)$ is the space of functions $f$ with $ e^{a|t|} f \in W^{1, 2}(\Omega)$. 
\end{enumerate}
Similarly we can define $W^s_a(p)$.  By the map $u\mapsto u(0)$, $W^u_a(p)$ and $W^s_a(p)$ are naturally identified with smooth submanifolds of $M$, which are called the $a$-exponential unstable and stable manifolds of $p$, having dimensions
\begin{align}
{\rm dim}\, W^u_a(p) = {\rm index}\, (x_p, f_{\eta_p}),\  {\rm dim}\, W^s_a(p) =  n - {\rm index}\, (x_p, f_{\eta_p}) - 1.
\end{align}

Let $\mathring{W}^u(p) =  W^u(p) \setminus W^u_a(p)$ and $\mathring{W}^s(p) = W^s(p) \setminus W^s_a(p)$.  These manifolds are unions of orbits that approach  $p$ polynomially rather than exponentially. They are  submanifolds of $M \times \{\eta_p\}$ of dimension $ {\rm dim}\, W^u_a(p)+1$ and  ${\rm dim}\,W^s_a(p)+1$. We assume
\begin{enumerate}
\item[(A13)]For each $p = (x_p, \eta_p) \in {\mc C}_{\mc F}^{sing}$ and $q= (x_q, \eta_q) \in {\mc C}_{\mc F}$ with $\eta_p = \eta_q = \eta$, $W^u_a(x_p)$ and $W^s(x_q)$  (respectively $W^s_a(x_p)$ and $W^u(x_q)$) intersect transversely in $M \times \{\eta\}$, $\mathring{W}^u(x_p)$ and $W^s(x_q)$  (respectively $\mathring{W}^s(x_p)$ and $W^u(x_q)$) intersect transversally in $M \times \{\eta\}$.
\end{enumerate}
The reader may refer to \cite{Floer_unregularized} and \cite{Floer_intersection} for this transversality in the case of Lagrangian intersections. 

(A13) implies that for $p = (x_p, \eta_p) \in {\mc C}_{\mc F}^{sing}$ and  $q = (x_q, \eta_q) \in {\mc C}_{\mc F}\setminus {\mc C}_{\mc F}^{sing}$, $W^u_a(p) \cap W^s(q) = \emptyset$ if ${\rm index}\, (x_q, f_{\eta_q}) \ge {\rm index}\, (x_p,f_{\eta_p})-1$;  and $W^u(q) \cap W^s_a(p) = \emptyset$ if ${\rm index}\, (x_p, f_{\eta_p}) \ge {\rm index}\, (x_q,f_{\eta_q})-2$.  For example, $W^u(q) \cap W^s_a(p) = \emptyset$ if 
\begin{multline}
0 \ge {\rm dim}\ W^u(q) + {\rm dim}\ W^s_a(p)-(n+1)= {\rm index}\, (x_q,f_{\eta_q}) + (n-{\rm index}\, (x_p, f_{\eta_p})-1) \\- (n+1)= {\rm index}\, (x_q,f_{\eta_q}) -{\rm index}\, (x_p, f_{\eta_p})-2.
\end{multline}

When the inequalities are equalities, $W^u(p) \cap W^s(q) = \mathring{W}^u(p) \cap W^s(q)$ and $W^u(q) \cap W^s(p)= W^u(q) \cap \mathring{W}^s(p)$) consist of isolated orbits corresponding to solutions that approach $p$ like $\frac{1}{t}$ rather than exponentially. We call such orbits {\em cusp orbits}; parametrized ones are {\it cusp solutions}. An argument similar to one in \cite{Floer_intersection} shows that between $p$ and $q$ there are only finitely many cusp orbits.



\subsubsection{Short fast orbits}

By Proposition \ref{prop:singpar}, near $p \in {\mc C}_{\mc F}^{sing}$, one can parametrize ${\mc C}_{\mc F}$ by $\gamma_p: (-\epsilon, \epsilon) \to {\mc C}_{\mc F}$ such that
\begin{align}\label{eqn510}
\gamma_p(0) = p,\ \eta(t) = \eta_p \pm s^2. 
\end{align}
We choose the orientation of $\gamma_p$ such that ${\rm index}\left( \gamma_p((-\epsilon, 0)) \right) = {\rm index} \left(\gamma_p ((0, \epsilon)) \right) + 1$. For $\epsilon$ small enough there exists a unique such parametrization $\gamma$ and we call $\gamma$ the {\em canonical parametrization} near $p$. Then, for $\epsilon$ small enough, there is a unique orbit of the flow of $-\nabla f_{\eta(s)}$ from $\gamma(-s)$ to $\gamma(s)$ and they are all contained in a small neighborhood of $x_p$. 

\begin{lem}
There exists $\epsilon_0>0$ such that for all $p \in {\mc C}_{\mc F}^{sing}$ and for $s \in (0, \epsilon_0)$ there exists a unique orbit of the fast flow from $\gamma_p(-s)$ to $\gamma_p(s)$, where $\gamma_p: (-\epsilon_0, \epsilon_0)\to {\mc C}_{\mc F}$ is the canonical parametrization of ${\mc C}_{\mc F}$ near $p$.
\end{lem}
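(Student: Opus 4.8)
The plan is to reduce the lemma to a statement about a one-parameter family of gradient flows near a generic degenerate critical point, and then to analyze the local model provided by Proposition~\ref{prop:singpar} directly. Fix $p = (x_p,\eta_p) \in {\mc C}_{\mc F}^{sing}$. By Proposition~\ref{prop:singpar} we may choose local coordinates $(x_1,\dots,x_n)$ on $M$ centered at $x_p$ in which ${\mc C}_{\mc F}$ is parameterized by $x_n$, and by the canonical parametrization $\gamma_p$ we may write $\eta = \eta_p + c s^2 + O(s^3)$ with $c \neq 0$. Along ${\mc C}_{\mc F}\setminus {\mc C}_{\mc F}^{sing}$ exactly one eigenvalue of ${\rm Hess}\,f_{\eta}$ changes sign at $p$, and by (A8) it does so transversally; the remaining $n-1$ eigenvalues stay bounded away from $0$. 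So near $p$ the fast flow decomposes, after a (smooth, $s$-dependent) change of coordinates, into a strongly hyperbolic part in $n-1$ directions and a one-dimensional ``slow-center'' direction in which the vector field is, to leading order, that of a generic saddle-node family $\dot{w} = -(a(s) + b\,w^2 + \dots)$ with $a(s)$ vanishing linearly in $s$ at $s=0$.

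First I would make the local reduction precise: using the center-manifold/Lyapunov–Perron reduction for the nonautonomous-but-$\eta$-constant fast flow, or simply the Morse-lemma-with-parameters for $f_{\eta}$ away from the single degenerate direction, I would show that for $|s|$ small any fast orbit staying in a small neighborhood of $x_p$ is governed by the reduced one-dimensional equation for $w = w(t)$, and that the endpoints $\gamma_p(\pm s)$ correspond under this reduction to the two equilibria $w = \pm\sqrt{a(s)/|b|}(1+o(1))$ of the reduced equation (one a sink, one a source, consistent with the index jump by one and our choice of orientation of $\gamma_p$). Next I would solve the reduced scalar ODE: for the model $\dot w = -(w^2 - r^2)$ with $r = r(s) \sim \text{const}\cdot\sqrt{|s|}$, there is, up to time translation, exactly one heteroclinic orbit from $w = -r$ to $w = r$, given explicitly by $w(t) = -r\tanh(rt)$; for the actual reduced equation, which is an $O(s)$ (in $C^1$) perturbation of this model, I would invoke the structural stability of this hyperbolic connection — or a direct contraction-mapping/shooting argument — to get existence and local uniqueness of a connecting orbit for each small $s>0$, with a uniform $\epsilon_0$ obtained by compactness of ${\mc C}_{\mc F}^{sing}$ (which is finite, or at least compact, by (A8)--(A9)).

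Then I would upgrade local uniqueness to the full uniqueness claimed in the lemma: a priori a fast orbit from $\gamma_p(-s)$ to $\gamma_p(s)$ need not stay in the small neighborhood of $x_p$. Here I would use that $x_p$ is (for $s\neq0$ small) a Morse critical point of $f_{\eta(s)}$ with $f_{\eta(s)}(\gamma_p(-s)) - f_{\eta(s)}(\gamma_p(s)) = O(s^2)$ wait — more carefully, the two endpoints are distinct critical points of $f_{\eta(s)}$ whose critical values differ by a quantity that is $O(s^2)$; since the energy of any connecting orbit equals this difference and tends to $0$ as $s\to 0$, a standard compactness argument (as in Section~\ref{sec:anal}) forces any such orbit, for $s$ small, to remain in an arbitrarily small neighborhood of the set of critical points it could break at, which near $p$ is just $\{x_p\}$; hence for $\epsilon_0$ small the orbit lies in the coordinate chart and local uniqueness applies.

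The main obstacle is the non-hyperbolicity exactly at $s = 0$: the two endpoints collide and the connection degenerates, so neither the implicit function theorem nor the exchange lemma applies uniformly down to $s=0$. This is precisely why one wants the saddle-node normal form from Proposition~\ref{prop:singpar} with the nonvanishing quadratic coefficient $c$: it guarantees that after rescaling $w$ and $t$ by $r(s)$ the reduced equation becomes an $O(\sqrt{|s|})$ perturbation of the fixed model $\dot w = -(w^2 - 1)$ uniformly, so the hyperbolic heteroclinic of the model persists. Handling this rescaling carefully — and checking that the contribution of the strongly hyperbolic $n-1$ directions, which is genuinely exponentially contracting/expanding and hence unproblematic, can indeed be eliminated by a reduction that is uniform as $s\to 0$ — is where the real work lies; the rest is the scalar phase-line analysis plus the energy-based compactness argument.
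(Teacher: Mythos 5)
The paper provides no proof of this lemma. The sentence immediately preceding it (``Then, for $\epsilon$ small enough, there is a unique orbit of the flow of $-\nabla f_{\eta(s)}$ from $\gamma(-s)$ to $\gamma(s)$ and they are all contained in a small neighborhood of $x_p$'') is an informal restatement of the conclusion, and the authors evidently treat the lemma as a routine consequence of the saddle-node normal form established in Propositions \ref{prop:singpar} and \ref{prop:singpar2}. So there is no author-written argument to compare against; your proposal supplies the expected argument, and its outline is correct: reduce to the parameter-dependent center manifold of $-\nabla f_\eta$ at $x_p$, where the vector field is a generic scalar saddle-node family; after rescaling $w\mapsto w/r(s)$, $t\mapsto r(s)t$ with $r(s)\sim\sqrt{|s|}$ the reduced equation is a uniformly $C^1$-small perturbation of $\dot w=-(w^2-1)$, whose heteroclinic $-\tanh t$ is structurally stable; and use energy to confine any candidate connecting orbit to the chart.

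A few details should be tightened. The energy $f_{\eta(s)}(\gamma_p(-s))-f_{\eta(s)}(\gamma_p(s))$ is $O(s^3)$, not $O(s^2)$: in the normal form both critical values differ from $f_{\eta_p}(x_p)$ by $O(s^3)$. This is harmless since you only use that it tends to zero, but the confinement step also needs the observation (worth making explicit) that $x_p$ is \emph{isolated} in ${\rm Crit}(f_{\eta_p})$; this holds because the degeneracy has corank one by (A8), so $f_{\eta_p}$ locally has the form $\mathrm{const}\pm x_1^2\pm\cdots\pm x_{n-1}^2+dx_n^3+\cdots$ with $d\neq 0$, whose only nearby critical point is the origin. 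For uniqueness, the fact to invoke explicitly is that the local center manifold of a corank-one equilibrium (taken uniformly in the parameter $\eta$ near $\eta_p$) contains every orbit that remains in a small enough neighborhood of $x_p$ for all $t\in{\mb R}$; combined with confinement, this reduces uniqueness to the manifestly unique source-to-sink connection in the scalar model. Finally, ${\mc C}_{\mc F}^{sing}$ is in fact finite, not merely compact: it is discrete by (A8), and bounded because near the ends of ${\mc C}_{\mc F}$, where $|\eta|\to\infty$ and $x\to y\in{\rm Crit}\,\mu$, one has ${\rm Hess}\,f_\eta\sim\eta\,{\rm Hess}\,\mu(y)$, which is nondegenerate, so $d_{\mc C}$ is eventually nonzero; this gives the uniform $\epsilon_0$ you want.
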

If $0< s \leq \epsilon< \epsilon_0$, then we call a fast orbit from $\gamma_p(-s)$ to $\gamma_p(s)$ an {\em $\epsilon$-short orbit}.
\begin{lem}\label{lemma_smallenergy} For any $\epsilon$ small enough, there exists $\delta= \delta(\epsilon)>0$ such that, for all $\eta\in {\mb R}$, any orbit of the flow of $-\nabla f_\eta$ which is not an $\epsilon$-short orbit has energy no less than $\delta$.
\end{lem}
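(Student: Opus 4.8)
The plan is to argue by contradiction, reducing everything to a local statement near a single point of $\mc C_{\mc F}$. Suppose the conclusion fails: fix $\epsilon\in(0,\epsilon_0)$ with $\epsilon_0$ as in the preceding lemma, and choose nontrivial fast orbits $[\wt x_k]$, none of them $\epsilon$-short, where $\wt x_k$ solves $\dot x=-\nabla f_{\eta_k}(x)$, $\lim_{t\to\pm\infty}\wt x_k(t)=x_k^\pm\in M$, $(x_k^\pm,\eta_k)\in\mc C_{\mc F}$, and $E_k=f_{\eta_k}(x_k^-)-f_{\eta_k}(x_k^+)\to0$. The first task is to see that $\{\eta_k\}$ is bounded. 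Since $\mu$ is Morse and $0$ is a regular value of $\mu$, dividing ${\rm Hess}\,f_\eta={\rm Hess}\,f+\eta\,{\rm Hess}\,\mu$ by $\eta$ shows that for $|\eta|$ large $f_\eta$ is Morse with exactly one critical point $z_y(\eta)=y+O(|\eta|^{-1})$ near each $y\in{\rm Crit}\,\mu$, so $\mc C_{\mc F}^{sing}$ is bounded (hence finite, being discrete by (A8)) and ${\mc F}(z_y(\eta))=f(y)+\eta\mu(y)+O(|\eta|^{-1})$ with $\mu(y)\neq0$ by (A3). After rescaling time by $|\eta|^{-1}$ the fast flow converges to $-\nabla\mu$, whose connecting orbits decrease $\mu$, so for $|\eta|$ large a nontrivial fast orbit runs from some $z_{y_1}(\eta)$ to some $z_{y_2}(\eta)$ with $\mu(y_1)>\mu(y_2)$, and its energy is then $\eta(\mu(y_1)-\mu(y_2))+O(1)$, bounded away from $0$. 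Hence $\{\eta_k\}$ is bounded; after passing to a subsequence, $\eta_k\to\eta_*$.

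With $\eta_k$ bounded, $\|{\rm Hess}\,f_{\eta_k}\|_{C^0}$ is bounded, so $t\mapsto|\nabla f_{\eta_k}(\wt x_k(t))|$ obeys a linear differential inequality along the orbit; were it $\geq c$ at some time it would stay $\geq c/2$ on an interval of length bounded below, forcing $E_k=\int_{\mb R}|\nabla f_{\eta_k}(\wt x_k)|^2\,dt$ away from $0$. Thus $\sup_t|\nabla f_{\eta_k}(\wt x_k(t))|\to0$. Passing to a further subsequence, $\overline{\wt x_k({\mb R})}$ converges in the Hausdorff distance to a compact connected set $K\subset M$, every point of which is a critical point of $f_{\eta_*}$, i.e.\ $K\times\{\eta_*\}\subset\mc C_{\mc F}$. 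But $\mc C_{\mc F}$ meets $M\times\{\eta_*\}$ in a finite set — it is a graph over $\eta$ away from $\mc C_{\mc F}^{sing}$ and a fold over $x_n$ near it, by Proposition~\ref{prop:singpar} — so $K=\{x_*\}$ is a single point with $q_*:=(x_*,\eta_*)\in\mc C_{\mc F}$. In particular $x_k^\pm\to x_*$ and $\wt x_k({\mb R})$ eventually lies in any prescribed neighborhood of $x_*$.

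It remains to analyse the orbit locally near $q_*$, splitting on whether $q_*\in\mc C_{\mc F}^{sing}$. If $q_*\notin\mc C_{\mc F}^{sing}$, then $x_*$ is a nondegenerate critical point of $f_{\eta_*}$, so by the implicit function theorem $f_{\eta_k}$ has a unique critical point in a small ball about $x_*$ once $\eta_k$ is near $\eta_*$; for $k$ large the endpoints $x_k^\pm$ are critical points of $f_{\eta_k}$ in this ball, hence $x_k^-=x_k^+$, so $\wt x_k$ is homoclinic and $E_k=0$ — impossible for a nontrivial orbit. If $q_*=p\in\mc C_{\mc F}^{sing}$, then $\eta_k\to\eta_p$, and by Proposition~\ref{prop:singpar} the only critical points of $f_{\eta_k}$ near $x_p$ are $\gamma_p(\pm s_k)$ for the canonical parametrization $\gamma_p$ and some $s_k>0$ with $s_k\to0$ (the existence of $x_k^\pm$ near $x_p$ forces $\eta_k$ onto the side of $\eta_p$ carrying both branches). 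Since $\wt x_k$ is nontrivial, $x_k^-\neq x_k^+$, so $\{x_k^-,x_k^+\}=\{\gamma_p(-s_k),\gamma_p(s_k)\}$; the preceding lemma, which supplies the unique fast orbit between these two points, running from $\gamma_p(-s_k)$ to $\gamma_p(s_k)$, identifies $\wt x_k$ with the $s_k$-short orbit (there is no fast orbit in the other direction, since the two directions would force contradictory strict inequalities between $f_{\eta_k}(\gamma_p(-s_k))$ and $f_{\eta_k}(\gamma_p(s_k))$). As $s_k<\epsilon$ for $k$ large, $\wt x_k$ is $\epsilon$-short, contradicting our choice. This proves the lemma.

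I expect the last case to be the main obstacle: one must be certain that, in a neighborhood of a point of $\mc C_{\mc F}^{sing}$, a low-energy fast orbit can be nothing but a short orbit. This rests on the normal form of Proposition~\ref{prop:singpar} and the uniqueness statement in the preceding lemma, but the bookkeeping needs care — identifying which of the two nearby critical points is the source, checking that the orbit genuinely stays inside the neighborhood where uniqueness applies, and confirming that $\eta_k$ approaches $\eta_p$ from the side on which the fold has two branches. A secondary point requiring attention is the boundedness of $\{\eta_k\}$, which uses the asymptotic description of $\mc C_{\mc F}$ at infinity and the fact that there the fast flow is a small perturbation of $-\nabla\mu$.
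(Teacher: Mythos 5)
Your argument is correct, and the overall skeleton — argue by contradiction with a sequence of non-short orbits of vanishing energy, then split on whether $\eta_k$ is bounded — matches the paper's.  The treatment of the unbounded case (rescale time by $|\eta_k|^{-1}$ so the flow limits onto $\pm\nabla\mu$, giving energy $\sim |\eta_k|$) is the same idea as the paper's, though you should be a touch more careful with the sign of $\eta_k$: for $\eta_k\to-\infty$ the rescaled limit is $+\nabla\mu$ and the roles of $y_1,y_2$ reverse, which is harmless but deserves a word.

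The bounded-$\eta$ case is where you genuinely diverge.  The paper first normalizes: observing that any nontrivial fast orbit both of whose endpoints lie in ${\mc C}_{\mc F}^{sing,\epsilon}=\bigcup_p\gamma_p((-\epsilon,\epsilon))$ must already be $\epsilon$-short (same $\eta$, hence near a single fold by (A12), hence one of the short orbits by the uniqueness lemma), so that after relabeling one may assume $y_k(-\infty)\notin{\mc C}_{\mc F}^{sing,\epsilon}$.  Then both endpoints converge to a common limit $y_\infty$ lying \emph{outside} ${\mc C}_{\mc F}^{sing,\epsilon}$, and for large $k$ both endpoints lie in the same component of ${\mc C}_{\mc F}\setminus{\mc C}_{\mc F}^{sing,\epsilon/2}$ — but on such a component $\eta$ is injective, and the two endpoints share the same $\eta_k$, a contradiction.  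You instead skip the normalization, prove Hausdorff convergence of the entire orbit image to a single point of ${\mc C}_{\mc F}$ (via the gradient-size estimate and finiteness of ${\mc C}_{\mc F}\cap(M\times\{\eta_*\})$), and then do a two-case local analysis: IFT near a nondegenerate point forces the endpoints to coincide; the fold normal form of Proposition~\ref{prop:singpar} near a singular point forces the orbit to be the unique short one.  Your route is more self-contained and explicit, at the cost of carrying the singular case through to the end; the paper's normalization lets it deal only with the ``nondegenerate'' picture and invoke the injectivity of $\eta$ on the non-singular arcs, which is shorter.  Both are sound.

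One small caveat in your proof: you identify the two nearby critical points with $\gamma_p(\pm s_k)$ — this is exact under the paper's convention $\eta(\gamma_p(s))=\eta_p\pm s^2$, but worth flagging since with a more generic parametrization the two roots need not be symmetric; fortunately the paper's Proposition~\ref{prop:singpar} and the definition around (\ref{eqn510}) give precisely this form.
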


\begin{proof} Denote ${\mc C}_{\mc F}^{sing, \epsilon} = \cup_{p \in {\mc C}_{\mc F}^{sing}} \gamma_p((-\epsilon, \epsilon))$. Because we have assumed that the map ${\mc C}_{\mc F}^{sing}\to M \times {\mb R}\to {\mb R}$ is injective, any fast orbit both of whose beginning and end lie in ${\mc C}_{\mc F}^{sing, \epsilon}$ much be an $\epsilon$-short orbit. So suppose the lemma doesn't hold, then there exists $\epsilon>0$ and a sequence of nontrivial fast orbits $(y_k, \eta_k) \subset M \times \{\eta_k\}$ such that 
\begin{align}\label{317}
\lim_{k\to \infty} f_{\eta_k}( y_k(-\infty)) - f_{\eta_k}( y_k(+\infty)) = 0
\end{align}
and they are not $\epsilon$-short ones. Hence without loss of generality, we may assume that for all $k$, $y_k(-\infty) \notin {\mc C}_{\mc F}^{sing, \epsilon}$. 

\begin{enumerate}
\item $\lim_{k\to \infty}\eta_k = \pm \infty$, which implies that $y_k(\pm\infty)$ converges to points in ${\rm Crit} (\mu)$. However, if we rescale the orbit by $z_k(t) = y_k( \eta_k^{-1} t)$, then $z_k$ converges to a (possibly broken) nontrivial orbit of the flow of $-\nabla \mu$, which contradicts with (\ref{317}).

\item $\lim_{k \to \infty} \eta_k = \eta_\infty\in {\mb R}$. Then a subsequence of $y_k$ converges to a broken orbit of $f_{\eta_\infty}$, which must be a constant one. Hence
\begin{align}
\lim_{k\to \infty}(y_k(\pm\infty)) := y_\infty\in {\mc C}_{\mc F}\setminus {\mc C}_{\mc F}^{sing, \epsilon}.
\end{align}
So for $k$ large enough, $y_k(\pm\infty)$ lie in the same connected component of ${\mc C}_{\mc F} \setminus {\mc C}_{\mc F}^{sing, {\epsilon\over 2}}$. This is impossible because on each such component, distinct points have distinct values of $\eta$.
\end{enumerate}
\end{proof}

\subsection{The slow equation\label{sec:slow}}

On ${\mc C}_{\mc F}\setminus {\mc C}_{\mc F}^{sing}$, which is parameterized by $\eta$, the slow equation is given by restricting (\ref{eqn15b}) to ${\mc C}_{\mc F}\setminus {\mc C}_{\mc F}^{sing}$ and dividing by $\lambda^2$:
\begin{equation}
\label{slowde}
\eta^\prime=-\mu(x(\eta)).
\end{equation}
Thus $\eta^\prime$ changes sign only when $\mu=0$.
We have
$${\rm Crit} ({\mc F}) =  \left\{ (x, \eta)\in {\mc C}_{\mc F} \;|\;  \mu(x)=0 \right\}.$$
Thus $\eta^\prime$ changes sign only at points of ${\rm Crit} ({\mc F})$.  By (A9), $\eta^\prime$ does not change sign at points of  ${\mc C}_{\mc F}^{sing}$.

\begin{prop}
\label{prop:sloweq}
Equilibria of (\ref{slowde}) are hyperbolic.
\end{prop}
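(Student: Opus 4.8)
The plan is to reduce hyperbolicity to the nonvanishing of a single scalar derivative, and then to recognize that scalar as a Schur complement of the Hessian of $\mc F$, which is nondegenerate because $\mc F$ is Morse (Lemma \ref{lemma21}).

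First, recall that the equilibria of \eqref{slowde} are exactly the points $p=(x_p,\eta_p)\in{\rm Crit}(\mc F)$, and by (A9) none of them lies in ${\mc C}_{\mc F}^{sing}$. Hence, near such a $p$, the curve ${\mc C}_{\mc F}$ is a smooth graph $x=x(\eta)$ and \eqref{slowde} is a genuine smooth scalar ODE there. Its linearization at $\eta_p$ is the scalar $-\frac{d}{d\eta}\big|_{\eta_p}\mu(x(\eta))=-d\mu(x_p)\,x'(\eta_p)$, so it suffices to prove this quantity is nonzero.

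Next I would differentiate the defining identity $\nabla f(x(\eta))+\eta\nabla\mu(x(\eta))=0$ in $\eta$. Working in local coordinates on $M$ near $x_p$, this reads $A\,x'(\eta_p)=-b$, where $A={\rm Hess}\,f_{\eta_p}(x_p)$ is the coordinate Hessian and $b=d\mu(x_p)^{\mathsf T}$. Since $p\notin{\mc C}_{\mc F}^{sing}$ we have $\det A=d_{\mc C}(p)\neq0$, so $x'(\eta_p)=-A^{-1}b$ and therefore $\frac{d}{d\eta}\big|_{\eta_p}\mu(x(\eta))=-b^{\mathsf T}A^{-1}b$. On the other hand, in the coordinates $(x,\eta)$ the Hessian of $\mc F$ at $p$ is the bordered matrix $\begin{pmatrix}A & b\\ b^{\mathsf T} & 0\end{pmatrix}$, whose determinant equals $-\det(A)\,b^{\mathsf T}A^{-1}b$ by the Schur complement formula. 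Because $\mc F$ is a Morse function (Lemma \ref{lemma21}), this determinant is nonzero; hence $b^{\mathsf T}A^{-1}b\neq0$, the scalar linearization of \eqref{slowde} at $\eta_p$ is nonzero, and the equilibrium is hyperbolic.

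There is essentially no hard step here; the only point requiring care is the bookkeeping between metric-dual objects ($\nabla\mu$, the self-adjoint Hessian operator $H$ of $f_{\eta_p}$) and their coordinate counterparts ($d\mu$, the symmetric matrix $A$), which is why I would simply work in local coordinates throughout. A coordinate-free restatement is $\frac{d}{d\eta}\big|_{\eta_p}\mu(x(\eta))=-\langle\nabla\mu(x_p),H^{-1}\nabla\mu(x_p)\rangle$, and nondegeneracy of the Hessian of $\mc F$ is exactly the nonvanishing of this pairing.
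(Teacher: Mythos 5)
Your proof is correct, and it takes a genuinely different route from the paper's. The paper works in coordinates adapted to $\mu$ (so that $\mu = x_n$), writes the derivative of the critical-point equation as an $n \times (n+1)$ matrix, and identifies the kernel vector as the tangent to ${\mc C}_{\mc F}$; the key nondegeneracy input is that the $(n-1)\times(n-1)$ block, which is the Hessian of $f|_{\mu^{-1}(0)}$ at $x_p$, is invertible (the Morse condition in (A3)). You instead differentiate the defining identity $\nabla f(x(\eta)) + \eta\nabla\mu(x(\eta)) = 0$ in $\eta$ directly, recognize the scalar linearization $-\frac{d}{d\eta}\mu(x(\eta))\big|_{\eta_p}$ as the Schur complement $b^{\mathsf T}A^{-1}b$ in the bordered Hessian of $\mc F$, and then invoke Lemma \ref{lemma21} ($\mc F$ is Morse) to conclude nonvanishing. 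The hypotheses used are ultimately equivalent (Lemma \ref{lemma21} is deduced from (A2)--(A3)), but your argument has the advantage of being coordinate-free in essence and of making transparent the link between hyperbolicity of the slow equilibrium and nondegeneracy of ${\rm Hess}\,\mc F(p)$ — which is a pleasant identity in its own right. You correctly isolate the two inputs needed: $p\notin{\mc C}_{\mc F}^{sing}$ (so $A$ is invertible, from (A9)) and $\mc F$ Morse. Both proofs are sound.
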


\begin{proof}
  Let  $p=(x_p,\eta_p)$ be an equilibrium of (\ref{slowde}). Then $\mu(x_p)=0$, so by (A3), $d\mu(x_p)\neq0$. Hence we can choose local coordinates $(x_1,\ldots,x_n)$ near $x_p$ such that $\mu(x_1,\ldots,x_n)=x_n$.   Near $p$,  ${\mc C}_{\mc F}$ can be defined without reference to the metric $g$ by the equations
\begin{align}\label{42}
{\partial f\over \partial x_i} + \eta \delta_{nj} = 0,\ j=1, \ldots, n; \quad \delta_{ij}=\mbox{Kronecker delta}.
\end{align}
Let $e_n=\begin{pmatrix} 0 & \ldots &0&1\end{pmatrix}^T$, the $n$th standard unit basis vector in ${\mb R}^n$.  Then the derivative of the system (\ref{42}) is the $n \times (n+1)$ matrix
\begin{align}\label{12}
\begin{pmatrix}
{\rm Hess} f(x) & e_n
\end{pmatrix}
=
\begin{pmatrix}
 A & b & 0\\
 b^T & c & 1\\
\end{pmatrix} , 
\end{align}
where $A$ is an $(n-1)\times(n-1)$ matrix, and $( b^T, c)  = \left( {\partial^2 f\over \partial x_1 \partial x_n}, \cdots,  {\partial^2 f \over \partial x_{n-1} \partial x_n}, {\partial^2 f\over \partial x_n^2}  \right)$.  

Since $x_p$ is a critical point of $f |_{ \mu^{-1}(0)}$, by (A5), $A$ is invertible. The matrix (\ref{12}) has full rank; a nonzero vector in its kernel is tangent at $p$ to ${\mc C}_{\mc F}$.  Since $p\in{\mc C}_{\mc F}\setminus {\mc C}_{\mc F}^{sing}$, such a vector has nonzero $\eta$-component, and one can check that it has nonzero $x_n$-component.  Therefore near $p$, when ${\mc C}_{\mc F}$ is parameterized by $\eta$, we have $x(\eta)=(y(\eta),x_n(\eta))$ with 
$$
x_n(\eta)=a(\eta-\eta_p)+\mathcal{O}(\eta-\eta_p)^2, \quad a\neq0.  
$$
But then (\ref{slowde}) becomes
$$
\eta^\prime = a(\eta-\eta_p)+\mathcal{O}(\eta-\eta_p)^2,
$$
which proves the result.
\end{proof}

Next we discuss the slow equation near points of ${\mc C}_{\mc F}^{sing}$.  Recall that if $p_0$ is an equilibrium of $\dot p=h(p)$, $h$ is $C^s$,
$Dh(p_0)$ has $m$ eigenvalues (counting multiplicity) with real part 0, and $E$ is the corresponding $m$-dimensional invariant subspace of $\dot v=Dh(p_0)v$, then there is a $C^s$ invariant manifold through $p_0$ and tangent there to $E$, called the {\em center manifold} of $\dot p=h(p)$ at $p_0$.
\begin{prop}
\label{prop:singpar2}
Let  $p=(x_p,\eta_p) \in {\mc C}_{\mc F}^{sing}$.  We can choose local coordinates $(x_1, \ldots, x_n)$ on $M$ near $x_p$, with $x_p$ corresponding to 0, in which 
\begin{equation}
\label{dgradf}
D\nabla f(0)=
\begin{pmatrix}
A      &  0  \\
 0     &  0
\end{pmatrix}, \quad \mbox{$A$ an invertible symmetric  $(n-1)\times(n-1)$ matrix.}
\end{equation}
The center manifold of (\ref{eqn15a})--(\ref{eqn15b}) at $p$ is parameterized by $(x_n,\eta)$.  There are numbers $c\neq0$ and $d\neq0$ such that the system (\ref{eqn15a})--(\ref{eqn15b}), restricted to the center manifold, is 
\begin{align}
 x_n^\prime &= c(\eta-\eta_p)+d x_n^2+\ldots, \label{cmdex} \\
\eta^\prime &= -\lambda^2(\mu(x_p)+\ldots), \label{cmdeeta}
\end{align}
where  \ldots indicates terms of higher order (and, in (\ref{cmdex}), other second-order terms).
Therefore near $p$, ${\mc C}_{\mc F}$ is parameterize by $x_n$, and $\eta | {\mc C}_{\mc F}$ has a nondegenerate critical point at $p$.
\end{prop}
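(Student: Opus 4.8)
\emph{Proof proposal.} The plan is to treat $\lambda$ (equivalently $\epsilon=1/\lambda$) as a parameter, reduce the $(n+1)$-dimensional system \eqref{eqn15a}--\eqref{eqn15b} to a two-dimensional flow on a center manifold near $p$, and then read off the normal form by Taylor expansion. Since we only need finitely many jets of the reduced vector field, the loss of smoothness in the center manifold theorem ($C^s$ for any prescribed finite $s$) is harmless.

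First I would fix the coordinates. Because $p\in{\mc C}_{\mc F}^{sing}$, the point $x_p$ is a critical point of $f_{\eta_p}$, and by (A8) the kernel of ${\rm Hess}\,f_{\eta_p}(x_p)$ is exactly one-dimensional, spanned by a vector $w_0$. Also $\nabla\mu(x_p)\neq0$: otherwise $\nabla f(x_p)=-\eta_p\nabla\mu(x_p)=0$ would put $x_p\in{\rm Crit}(f)\cap{\rm Crit}(\mu)$, contradicting (A6). I would choose coordinates $(x_1,\dots,x_n)$ centered at $x_p$, with the metric Euclidean at the origin, that diagonalize the relevant Hessian and place its kernel along the $x_n$-axis; this yields the block form \eqref{dgradf} with $A$ invertible symmetric. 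Next, adjoin $\lambda'=0$ to \eqref{eqn15a}--\eqref{eqn15b}. At $(x_p,\eta_p,0)$ this is an equilibrium, whose linearization has an $(n-1)$-dimensional hyperbolic block and a two-dimensional center block. The key structural point is that this center subspace is a graph over the $(x_n,\eta)$-plane: it contains $(w_0,0)$ (which lies in the kernel) and a generalized eigenvector with nonzero $\eta$-component, the latter because $\langle\nabla\mu(x_p),w_0\rangle\neq0$ — equivalently, because the tangent line to ${\mc C}_{\mc F}$ at $p$ is exactly $(w_0,0)$, as one checks from linearizing $\nabla f+\eta\nabla\mu=0$. The center manifold theorem then gives, for each small $\lambda$, a two-dimensional invariant manifold near $p$ realized as a graph $x_i=x_i(x_n,\eta,\lambda)$, $i<n$, over $(x_n,\eta)$, containing the local piece of ${\mc C}_{\mc F}$ together with all local fast and slow orbits through that region.

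Then I would substitute the graph into \eqref{eqn15a}--\eqref{eqn15b} and Taylor-expand. The $\eta$-equation becomes $\eta'=-\lambda^2(\mu(x_p)+\dots)$ with $\mu(x_p)\neq0$ by (A9), giving \eqref{cmdeeta}. In the $x_n$-equation the linear part in $(x_n,\eta-\eta_p)$ has no $x_n$-term (the $x_n$-direction is the Hessian kernel) and an $\eta$-term with coefficient $c=-\langle\nabla\mu(x_p),\partial_{x_n}\rangle$, while the $x_n^2$-term has a coefficient $d$; this is \eqref{cmdex}. Intersecting the center manifold with ${\mc C}_{\mc F}$ (i.e.\ setting $x_n'=\eta'=0$) yields $c(\eta-\eta_p)+dx_n^2+\dots=0$. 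Since by (A7) ${\mc C}_{\mc F}$ is a smooth curve near $p$, this zero set must be a regular curve, forcing $c\neq0$; solving, $\eta=\eta_p-(d/c)x_n^2+O(x_n^3)$, so ${\mc C}_{\mc F}$ is parameterized by $x_n$. For $d\neq0$ I would use (A8): along ${\mc C}_{\mc F}$ parameterized by $x_n$, the determinant $d_{\mc C}=\det{\rm Hess}\,f_\eta(x)$ factors as the small eigenvalue $\kappa_n(x_n)$ times a nonvanishing product, and the small eigenvalue equals minus the nonzero eigenvalue $\partial_{x_n}(x_n')=2dx_n+\dots$ of the fast-flow linearization along ${\mc C}_{\mc F}$; hence $d_{\mc C}'(0)$ is a nonzero multiple of $d$, and regularity of $d_{\mc C}$ at $p$ gives $d\neq0$. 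Therefore $\eta|_{{\mc C}_{\mc F}}$ has a nondegenerate critical point at $p$, which simultaneously proves Proposition \ref{prop:singpar}.

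The main obstacle is the second step: verifying that the two-dimensional center subspace is genuinely a graph over $(x_n,\eta)$ (so that the reduced system has the stated form), and then tracing the two nonvanishing statements back to the hypotheses — $c\neq0$ to the smoothness of ${\mc C}_{\mc F}$ in (A7), and $d\neq0$ to the regular-value condition (A8), the latter through the linear-algebra identity relating $\frac{d}{dx_n}\det{\rm Hess}\,f_\eta$ along ${\mc C}_{\mc F}$ to $d$. The remaining work — keeping track of which quadratic terms are genuinely present in \eqref{cmdex} and which are absorbed into "$\dots$", and checking the $\lambda$-dependence is uniform — is routine bookkeeping.
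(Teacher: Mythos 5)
Your proposal is correct and essentially the same as the paper's proof: both block-diagonalize ${\rm Hess}\,f_{\eta_p}(x_p)$ so the one-dimensional kernel lies along $x_n$, pass to the two-dimensional center manifold parameterized by $(x_n,\eta)$, Taylor-expand the restricted system, and trace $c\neq0$ to (A7) and $d\neq0$ to (A8). One small attribution slip: that ${\rm Hess}\,f_{\eta_p}(x_p)$ has exactly a one-dimensional kernel at $p\in{\mc C}_{\mc F}^{sing}$ follows from (A7) (surjectivity of $\bigl({\rm Hess}\,f_{\eta_p}(x_p),\nabla\mu(x_p)\bigr)$ forces ${\rm rank}\,{\rm Hess}\ge n-1$), not from (A8); likewise the center subspace is a graph over the $(x_n,\eta)$-plane because $A$ is invertible, independently of whether $\langle\nabla\mu(x_p),w_0\rangle\neq0$, though the latter — again (A7) — is indeed what gives $c\neq0$ directly, and your more careful eigenvalue argument for $d\neq0$ via $d_{\mc C}$ is a correct expansion of the paper's terse ``(A8) implies $d\neq0$.''
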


\begin{proof}
(A7) implies that we can choose local coordinates $(x_1, \ldots, x_n)$ on $M$ near $x_p$, with $x_p$ corresponding to 0,  in which (\ref{dgradf}) holds.  In these coordinates, $\mu=\mu(x_p)+a^Tx + \dots$ with $a\in{\mb R}^n$.  
Write $x=(y,x_n)$ with $y\in{\mb R}^{n-1}$, and write $a=(b,c)$ with $b\in{\mb R}^{n-1}$ and $c\in{\mb R}$.  Near $p$ the system (\ref{eqn15a})--(\ref{eqn15b}) becomes
\begin{align}
y^\prime &= Ay + b(\eta-\eta_p) + \ldots, \label{lc1}\\ 
x_n^\prime &= c(\eta-\eta_p)+dx_n^2 + \ldots,  \label{lc2} \\ 
\eta^\prime &= -\lambda^2(\mu(x_p)+\ldots),  \label{lc3}
\end{align}
where \dots indicates higher-order terms (and, in (\ref{lc2}), other second-order terms).  (A7) implies that $c\neq0$, and (A8) implies that $d\neq0$.
The linearization of (\ref{lc1})--(\ref{lc3}) at $(y,x_n,\eta)=(0,0,\eta_p)$ is 
$$
\begin{pmatrix}
y^\prime  \\ 
x_n^\prime \\ 
\eta^\prime 
\end{pmatrix}
\begin{pmatrix}
A      &   0 & b \\
  0    &  0 & c \\
  0    &  0 &0
\end{pmatrix}
\begin{pmatrix}
y  \\ 
x_n \\ 
\eta-\eta_p \end{pmatrix}.
$$ 
The matrix has a two-dimensional generalized eigenspace for the eigenvalue 0 that is tangent to the center manifold of (\ref{lc1})--(\ref{lc1}) at $p$.  The center manifold  is parameterized by $(x_n,\eta)$, and to lowest order it is given by
$$
y=-A^{-1}b(\eta-\eta_p)
$$
The restriction of  (\ref{lc1})--(\ref{lc3}) to the center manifold is therefore given by (\ref{cmdex})--(\ref{cmdeeta}).
\end{proof}

Proposition \ref{prop:singpar} is of course just a partial restatement of Proposition \ref{prop:singpar2}.

\subsection{Three indices}

In this subsection we describe the relation between three different Morse indices:
\begin{enumerate} 
\item For any $p = (x_p, \eta_p) \in {\mc C}_{\mc F}$, we have ${\rm index} (x_p, f_{\eta_p})$, used in Subsection \ref{sec:ff}.
\item For $p = (x_p, \eta_p) \in {\rm Crit} ({\mc F})$, we can consider ${\rm index} \,(p, {\mc F})$. 
\item If $p=(x_p, \eta_p) \in {\rm Crit} ({\mc F})$,  then $x_p \in {\rm Crit} (f|_{\mu^{-1}(0)})$, and we can consider ${\rm index}\, (x_p, f|_{\mu^{-1}(0)})$. 
\end{enumerate}
We have already seen in Lemma \ref{lemma21} that ${\rm index}(p, {\mc F}) = {\rm index}(x_p, f|_{\mu^{-1}(0)}) + 1$ for $p = (x_p, \eta_p) \in {\rm Crit}({\mc F})$. The best way to see the relation of the different indices is to look at the corresponding unstable manifolds. 
\begin{prop}
\label{twoindexprop2}
Let $p= (x_p, \eta_p) \in {\rm Crit}({\mc F})$.
\begin{enumerate}
\item If $p$ is a repeller of the slow equation, then $ {\rm index}\, (p, {\mc F})={\rm index} (x_p, f_{\eta_p}) +1$.
\item If $p$ is an attractor of the slow equation, then ${\rm index}\, (p, {\mc F})={\rm index} (x_p, f_{\eta_p})$.
\end{enumerate}
\end{prop}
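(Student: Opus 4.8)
The plan is to reduce the proposition to a single finite-dimensional linear-algebra computation: compare the inertia of ${\rm Hess}\,{\mc F}(p)$ with that of ${\rm Hess}\, f_{\eta_p}(x_p)$ by the classical bordered-matrix (Lagrange-multiplier Hessian) formula, and then match the one sign that governs that formula with the stability type of the slow equation. The geometric content, as suggested by the remark preceding the statement, is that ${\rm index}\,(x_p, f_{\eta_p})$ counts the fast unstable directions at $p$ and the slow direction along ${\mc C}_{\mc F}$ contributes an extra $+1$ precisely when the slow equilibrium is a repeller.

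First I would set up notation and record the key nondegeneracy. Since $p \in {\rm Crit}({\mc F})$, assumption (A9) gives $p \notin {\mc C}_{\mc F}^{sing}$, i.e.\ the matrix $H := {\rm Hess}\, f_{\eta_p}(x_p)$ is invertible; this invertibility is exactly what makes the bordered-matrix argument work, and it is the one hypothesis whose use I must track. Choose coordinates $(x_1,\dots,x_n)$ near $x_p$ that are $g$-orthonormal at $x_p$, and set $w := \nabla\mu(x_p)$, a nonzero vector by (A3), so that $d\mu(x_p)v = w^T v$. Because $x_p$ is a critical point of $f_{\eta_p}$, computing second partials of ${\mc F}(x,\eta)=f(x)+\eta\mu(x)$ in the coordinates $(x_1,\dots,x_n,\eta)$ gives
\[
{\rm Hess}\,{\mc F}(p) = \begin{pmatrix} H & w \\ w^T & 0 \end{pmatrix} =: \wt{H},
\]
and by the Morse Lemma ${\rm index}\,(p,{\mc F})$ is the number of negative eigenvalues of $\wt{H}$, while ${\rm index}\,(x_p, f_{\eta_p})$ is the number of negative eigenvalues of $H$.

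Next I would apply the congruence
\[
\begin{pmatrix} I & 0 \\ -w^T H^{-1} & 1 \end{pmatrix} \wt{H} \begin{pmatrix} I & -H^{-1}w \\ 0 & 1 \end{pmatrix} = \begin{pmatrix} H & 0 \\ 0 & -\,w^T H^{-1} w \end{pmatrix}
\]
together with Sylvester's law of inertia. Since ${\mc F}$ is Morse by Lemma \ref{lemma21}, $\det\wt{H} = -\det H\cdot(w^T H^{-1} w)\neq 0$, so $w^T H^{-1}w\neq0$, and the number of negative eigenvalues of $\wt{H}$ equals that of $H$ plus $1$ if $w^T H^{-1} w > 0$, and plus $0$ if $w^T H^{-1} w < 0$. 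Hence ${\rm index}\,(p,{\mc F})$ equals ${\rm index}\,(x_p, f_{\eta_p}) + 1$ or ${\rm index}\,(x_p, f_{\eta_p})$ according to the sign of $w^T H^{-1} w$.

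Finally I would identify that sign with the stability type of the slow equation (\ref{slowde}). Near $p$, parametrize ${\mc C}_{\mc F}$ by $\eta \mapsto x(\eta)$ with $x(\eta_p)=x_p$; differentiating the defining relation $\nabla f(x(\eta)) + \eta\nabla\mu(x(\eta))=0$ at $\eta_p$ yields $H\,x'(\eta_p) + w = 0$, so $x'(\eta_p) = -H^{-1}w$. Since $\mu(x_p)=0$, the linearization of $\eta'=-\mu(x(\eta))$ at $p$ is the scalar $-d\mu(x_p)\,x'(\eta_p) = w^T H^{-1} w$. By Proposition \ref{prop:sloweq} this equilibrium is hyperbolic, so $p$ is a repeller exactly when $w^T H^{-1}w>0$ and an attractor exactly when $w^T H^{-1}w<0$; combining with the previous paragraph gives the two asserted identities. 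I do not anticipate any serious obstacle here; the only delicate point is the use of (A9) to guarantee that $H$ is invertible, without which the bordered-matrix formula degenerates and one would instead have to argue on the center manifold of Proposition \ref{prop:singpar2} — but points of ${\mc C}_{\mc F}^{sing}$ are excluded from ${\rm Crit}({\mc F})$ by hypothesis.
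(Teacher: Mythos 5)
Your proof is correct, but it proceeds by a genuinely different route than the paper's. The paper argues via the singular perturbation structure: for small $\lambda>0$, the linearization of the negative gradient flow of $({\mc F},g_\lambda)$ at $p$ has one eigenvalue near zero coming from the slow direction, whose sign is that of the slow linearization, while the remaining eigenvalues are perturbations of those of $-{\rm Hess}\,f_{\eta_p}(x_p)$; since the Morse index is metric-independent, the count follows. You instead work with a fixed metric and compare ${\rm Hess}\,{\mc F}(p)$ with ${\rm Hess}\,f_{\eta_p}(x_p)$ directly via the bordered-matrix congruence and Sylvester's law of inertia, then separately compute the slow-equation linearization as $w^T H^{-1}w$ and observe that the same scalar governs both. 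Your approach buys a self-contained, purely finite-dimensional linear-algebra proof that does not invoke the $\lambda\to0$ machinery or the perturbation of eigenvalues; the paper's approach is shorter given the singular-perturbation framework already in place. Two small remarks: (i) your use of (A9) to guarantee that $H$ is invertible is exactly the right nondegeneracy check, and (ii) in the identity $H\,x'(\eta_p)+w=0$, the covariant derivative $\nabla_V(\nabla f_{\eta_p})$ at $x_p$ is connection-independent precisely because $\nabla f_{\eta_p}(x_p)=0$, so the formula $x'(\eta_p)=-H^{-1}w$ is coordinate-invariant as you need it to be.
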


\begin{proof}
If $p$ is a repeller of the slow equation, then for small $\lambda>0$, the equilibrium $p$ of (\ref{eqn15a})--\ref{eqn15b}) has one more positive eigenvalue than the equilibrium $x_p$ of  $f_{\eta_p}$.  Since the index of a critical point of a Morse function $h$ is the number of positive eigenvalues of the corresponding equilibrium of $\nabla h$ for any metric, (1) follows.  (2) is similar.
\end{proof}

\section{Adiabatic limit $\lambda\to 0$}\label{section5}

\subsection{Slow solutions, fast solutions, and fast-slow solutions}

Let $p_-, p_+\in {\mc C}_{\mc F}$. 

Recall that a nontrivial fast solution from $p_-$ to $p_+$ is a nonconstant solution $\wt{x}(t)$, $-\infty<t<\infty$, of (\ref{eqn15a})--(\ref{eqn15b}) for $\lambda = 0$, such that $\lim_{t\to \pm\infty} \wt{x}(t) = p_\pm$. 

Let $(x(\eta),\eta)$ parameterize the closure of a component of $ {\mc C}_{\mc F}\setminus {\mc C}_{\mc F}^{sing}$. Let $I(\alpha_-, \alpha_+)\subset {\mb R}$ be the closed interval from $\alpha_-$ to $\alpha_+$,  with $\alpha_- \le \alpha_+$; we allow $\alpha_-= -\infty$, $\alpha_+=+\infty$, and $\alpha_- = \alpha_+ \in {\mb R}$. Let $\eta(t)$, $t$ in the interior of $I(\alpha_-, \alpha_+)$, be a solution of $\eta^\prime=-\mu(x(\eta))$. If $\alpha_-$ (respectively $\alpha_+$) is finite, we extend $\eta(t)$ continuously to $\alpha_-$ (respectively $\alpha_+$).  Let $p(t)=(x(\eta(t)),\eta(t))$, $t\in I(\alpha_-, \alpha_+)$, and let $p_\pm \in {\mc C}_{\mc F}$.  Then $p(t)$ is a {\em slow solution from $p_-$ to $p_+$} (for short, a {\em slow solution}) provided:
\begin{enumerate}
\item if $\alpha_-$ (respectively $\alpha_+$) is finite, then $p(\alpha_-)=p_-$ (respectively $p(\alpha_+)=p_+$);
\item if $\alpha_-=-\infty$ (respectively $\alpha_+=+\infty$), then 
 $\lim_{t\to-\infty }p(t)=p_-$ (respectively  \newline $\lim_{t\to+\infty }p(t)=p_+$).
\end{enumerate}
A slow solution or its orbit is {\it trivial} if the orbit is a single point in ${\mc C}_{\mc F}$.  

A {\em fast-slow solution from $p_-$ to $p_+$} of (\ref{eqn15a})--(\ref{eqn15b}) (for short, a {\em fast-slow solution}) is a sequence 
\begin{align}\label{eqn61}
{\ms X} = \left( p_0, \sigma_1, p_1, \sigma_2, p_2, \ldots, p_{n-1}, \sigma_n, p_n\right)
\end{align}
such that: 
\begin{enumerate}
\item $p_0=p_-$, $p_n=p_+$.
\item Each $p_i \in {\mc C}_{\mc F}$.
\item Each $\sigma_i$ is a nontrivial fast solution or a slow solution from $p_{i-1}$ to $p_i$.  Trivial slow solutions are allowed, but $\sigma_1$ and $\sigma_n$ are not allowed to be trivial slow solutions.  
\item Either (i) $\sigma_i$ is slow for $i$ even and fast for $i$ odd, or (ii) $\sigma_i$ is slow for $i$ odd and fast for $i$ even.
\end{enumerate}

Trivial slow solutions are allowed to deal with the possibility that a fast solution to $p$ is followed by a fast solution from $p$.

An orbit (fast orbit, slow orbit, or fast-slow orbit) is an equivalence class of solutions obtained by forgetting the parametrization (but remembering the orientation in which ${\mc F}$ decreases). In particular, a fast-slow orbit is denoted by $\left[ {\ms X}\right]$ for ${\ms X}$ as in (\ref{eqn61}). 

\begin{rem}
A fast-slow orbit defined above is similar to a ``flow line with cascades'' in Morse-Bott theory (see \cite{Frauenfelder_thesis}), where the fast solutions correspond to  ``cascades''. The difference is that the slow manifold ${\mc C}_{\mc F}$ is not everywhere normally hyperbolic, and we can have $p_i\in {\mc C}_{\mc F}^{sing}$. 
\end{rem}

In the remainder of this subsection, we will discuss some properties of fast-slow orbits from $p_- \in {\mc C}_{\mc F}$ to $p_+ \in {\mc C}_{\mc F}$. We are particularly interested in the case $p_\pm  \in {\rm Crit}\left( {\mc F} \right)$ and ${\rm index}(p_-, {\mc F})- {\rm index}(p_+ , {\mc F}) = 1$.

The following proposition is obvious.

\begin{prop}
Consider a nontrivial slow solution $p(t)$, $t\in I(\alpha_-, \alpha_+)$, from $p_-$ to $p_+$.
\begin{enumerate}
\item If $p_- \in {\mc C}_{\mc F}^{sing}$, then  either (i) for all $t\in (\alpha_-,\alpha_+)$, ${\rm index}\, (x_{p(t)}, f_{\eta_{p(t)}})={\rm index}\, (x_-, f_{\eta_-})$, or (ii) for all $t\in (\alpha_-, \alpha_+)$, ${\rm index}\, (x_{p(t)}, f_{\eta_{p(t)}})=\newline{\rm index}\, (x_-, f_{\eta_-})+1$.
\item If $p_+ \in {\mc C}_{\mc F}^{sing}$, then either  (i) for all $t\in (\alpha_-, \alpha_+)$, ${\rm index}\, (x_{p(t)}, f_{\eta_{p(t)}})={\rm index}\, (x_+, f_{\eta_+})$, or (ii) for all $t\in (\alpha_-, \alpha_+)$, ${\rm index}\, (x_{p(t)}, f_{\eta_{p(t)}})=\newline{\rm index}\, (x_+, f_{\eta_+})+1$.
\end{enumerate}
\end{prop}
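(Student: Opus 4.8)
The plan is to unwind the definition of a slow solution and combine it with two facts already recorded in Subsection~\ref{sec:ff}: that ${\rm index}\,(x_p,f_{\eta_p})$ is locally constant on ${\mc C}_{\mc F}\setminus{\mc C}_{\mc F}^{sing}$, and that near a point $p\in{\mc C}_{\mc F}^{sing}$ the two branches of ${\mc C}_{\mc F}$ carry index values differing by one, the smaller of which equals ${\rm index}\,(x_p,f_{\eta_p})$. I would prove alternative~(1); alternative~(2) is obtained by reversing the time direction.

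First I would observe that when $p_-=p(\alpha_-)\in{\mc C}_{\mc F}^{sing}$, the endpoint $\alpha_-$ is finite: otherwise $p_-$ would be an equilibrium of the slow equation~\eqref{slowde}, hence a point of ${\rm Crit}({\mc F})$, contradicting (A9). Next, by the very definition of a slow solution the arc $\{p(t):t\in(\alpha_-,\alpha_+)\}$ lies in a single component $\beta$ of ${\mc C}_{\mc F}\setminus{\mc C}_{\mc F}^{sing}$; being connected, it carries a constant value $k$ of ${\rm index}\,(x_{p(t)},f_{\eta_{p(t)}})$ by local constancy. Since $k$ is precisely the quantity occurring in both alternatives (i) and (ii), it remains only to compare $k$ with ${\rm index}\,(x_-,f_{\eta_-})$.

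For that last step I would use (A9) once more: $\mu(x_-)\neq0$, so $-\mu(x(\eta))$ is bounded away from $0$ near $p_-$, whence $\eta(t)$ runs monotonically to $\eta_-$ as $t\to\alpha_-^+$ and $p(t)\to p_-$ along $\beta$. By Proposition~\ref{prop:singpar}, ${\mc C}_{\mc F}$ is a smooth arc through $p_-$; deleting $p_-$ leaves exactly two branches, and $\beta$ is one of them. On one branch the index is ${\rm index}\,(x_-,f_{\eta_-})$ and on the other it is ${\rm index}\,(x_-,f_{\eta_-})+1$, so $k$ equals one of these two numbers, which is exactly alternative (i) or alternative (ii). Alternative~(2) is proved identically, using $p_+=p(\alpha_+)$ and $t\to\alpha_+^-$. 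I do not expect any genuine obstacle here, consistent with the paper's remark that the proposition is obvious; the only steps that deserve a sentence of care are the finiteness of $\alpha_-$ (resp.\ $\alpha_+$) and the claim that the solution limits onto $p_-$ (resp.\ $p_+$) along a single branch, both of which rest on (A9) and the nondegeneracy of $\eta|_{{\mc C}_{\mc F}}$ at points of ${\mc C}_{\mc F}^{sing}$.
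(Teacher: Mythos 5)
Your argument is correct and matches what the paper intends; indeed the paper labels this proposition as ``obvious'' and supplies no written proof. Your write-up fills in exactly the right details: finiteness of $\alpha_-$ (resp.\ $\alpha_+$) via (A9), local constancy of the index on the single component of ${\mc C}_{\mc F}\setminus{\mc C}_{\mc F}^{sing}$ containing the open arc $\{p(t):t\in(\alpha_-,\alpha_+)\}$, and the branch dichotomy at a fold point from Proposition~\ref{prop:singpar}, which together pin down the index to one of the two stated values.
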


A slow solution $p(t)$ for $t\in I(\alpha_-, \alpha_+)$ is {\em regular} if it is nontrivial and satisfies: 
\begin{enumerate}
\item If $p_- \in {\mc C}_{\mc F}^{sing}$, then  for all $t\in (\alpha_-,\alpha_+)$, ${\rm index}\, (x_{p(t)}, f_{\eta_{p(t)}}) = \newline{\rm index}\, (x_-, f_{\eta_-})+1$. 

\item If $p_+ \in {\mc C}_{\mc F}^{sing}$, then  for all $t\in(\alpha_- ,\alpha_+)$, ${\rm index}\, (x_{p(t)}, f_{\eta_{p(t)}}) = {\rm index}\, (x_+, f_{\eta_+})$. 
\end{enumerate}
A slow orbit is regular if it is the orbit of a regular slow solution. 

See Figure \ref{fig:regular}.

\begin{figure}[htbp]
\centering
\includegraphics[scale=0.7]{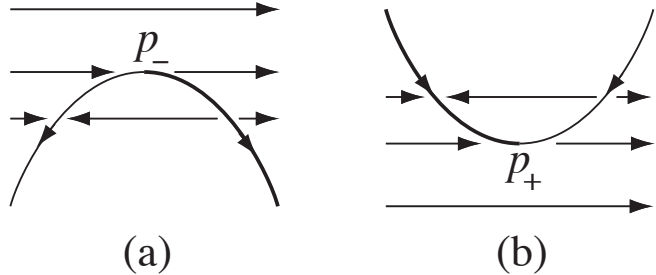}
\caption{(a) Direction of slow flow is downward.  The thick curve is a regular slow orbit that starts at $p_-$.  Points on it have index one greater than $p_-$.  (b) Direction of slow flow is downward. The thick curve is a regular slow orbit that ends at $p_+$.  Points on it have the same index as $p_+$.}
\label{fig:regular}
\end{figure}

\begin{prop}
\label{prop:indexchange}
Let $\gamma$ be either a slow or fast solution from $p_-=(x_-,\eta_-)$ to $p_+=(x_+,\eta_+)$.
\begin{enumerate}
\item If $\gamma$ is a regular slow solution, then:
\begin{enumerate}
\item If $p_- \in {\mc C}_{\mc F}^{sing}$, then ${\rm index}\, (x_+, f_{\eta_+})={\rm index}\, (x_-, f_{\eta_-})+1$.
\item Otherwise ${\rm index}\, (x_+, f_{\eta_+})={\rm index}\, (x_-, f_{\eta_-})$ 
\end{enumerate}
\item If $\gamma$ is a nontrivial fast solution (so $\eta_+=\eta_-$), then:
\begin{enumerate}
\item At most one of $p_\pm$ is in ${\rm Crit}({\mc F}) \cup  {\mc C}_{\mc F}^{sing}$.
\item ${\rm index}\, (x_+, f_{\eta_+}) \le {\rm index}\, (x_-, f_{\eta_-})$.
\item If $p_+\in{\rm Crit}({\mc F}) \cup  {\mc C}_{\mc F}^{sing}$ or $p_- \in {\rm Crit}({\mc F})$, then ${\rm index}\, (x_+, f_{\eta_+}) \le {\rm index}\, (x_-, f_{\eta_-})-1$.
\end{enumerate}
\end{enumerate}
\end{prop}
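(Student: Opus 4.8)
The plan is to prove Proposition \ref{prop:indexchange} by analyzing each case separately, using the local normal forms already established, together with the transversality assumptions (A10)--(A13) and the index relations recorded in Subsection \ref{sec:ff}.

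\textbf{Regular slow solutions.} First I would dispose of case (1). By definition of a regular slow solution and the preceding Proposition on the index along a slow solution, if $p_-\in{\mc C}_{\mc F}^{sing}$ then on the open interval $(\alpha_-,\alpha_+)$ the index of $x_{p(t)}$ equals ${\rm index}(x_-,f_{\eta_-})+1$; passing to the limit $t\to\alpha_+$ along a branch of ${\mc C}_{\mc F}\setminus{\mc C}_{\mc F}^{sing}$ the index is continuous, so ${\rm index}(x_+,f_{\eta_+})$ equals that common value, giving (1a). If $p_-\notin{\mc C}_{\mc F}^{sing}$, then the whole slow orbit stays on one component of ${\mc C}_{\mc F}\setminus{\mc C}_{\mc F}^{sing}$ on which the index is locally constant, and by (A9) the endpoint $p_+$, even if it is in ${\rm Crit}({\mc F})$, has the same index; this is (1b). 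Here I must also use (A9)/(A12) to ensure a regular slow solution cannot pass through a point of ${\mc C}_{\mc F}^{sing}$ in its interior, so the index genuinely is locally constant there.

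\textbf{Nontrivial fast solutions.} Case (2) is the substantive part. A nontrivial fast solution lies in a single slice $M\times\{\eta\}$ with $\eta=\eta_-=\eta_+$, and is a nonconstant negative gradient trajectory of $f_\eta$ from the critical point $x_-$ to the critical point $x_+$. For (2a), I would argue that if both $p_\pm$ lay in ${\rm Crit}({\mc F})\cup{\mc C}_{\mc F}^{sing}$, then both would have the same $\eta$-value, contradicting the injectivity of $\pi$ on ${\rm Crit}({\mc F})\cup{\mc C}_{\mc F}^{sing}$ asserted in (A12) — unless $p_-=p_+$, which is excluded since the fast solution is nontrivial. For (2b), the existence of a nonconstant negative gradient flow line of $f_\eta$ from $x_-$ to $x_+$ forces $W^u(x_-)\cap W^s(x_+)\neq\emptyset$; when both critical points are nondegenerate this is the standard fact that the index cannot increase along a nontrivial gradient trajectory, and when one of $x_\pm$ is degenerate (lying over ${\mc C}_{\mc F}^{sing}$) I would use the $a$-exponential (un)stable manifold decomposition introduced before (A13) together with the dimension count given there: if ${\rm index}(x_+,f_{\eta_+})>{\rm index}(x_-,f_{\eta_-})$ then the relevant intersection is empty by (A13), a contradiction. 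For (2c), I sharpen the strict inequality: if $p_+\in{\rm Crit}({\mc F})\cup{\mc C}_{\mc F}^{sing}$ or $p_-\in{\rm Crit}({\mc F})$, then near the corresponding endpoint the approach is along a genuine exponential (un)stable manifold of a nondegenerate critical point of $f_\eta$ (this is where one uses that at points of ${\rm Crit}({\mc F})$ or at the nondegenerate branch near ${\mc C}_{\mc F}^{sing}$ one has a nondegenerate Hessian, or, in the degenerate case, one uses the $a$-(un)stable manifold which has dimension one less). The transversality in (A11)/(A13) then forces the moduli space of such connecting orbits to be nonempty only when the Fredholm index is at least one, i.e. ${\rm index}(x_+,f_{\eta_+})\le{\rm index}(x_-,f_{\eta_-})-1$.

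\textbf{Main obstacle.} The delicate point I expect to wrestle with is the bookkeeping of indices and dimensions at the singular points $p\in{\mc C}_{\mc F}^{sing}$, where $x_p$ is a degenerate critical point of $f_{\eta_p}$ with exactly one zero eigenvalue of the Hessian: one must carefully distinguish between $W^u(p)$, $W^s(p)$ (of dimensions ${\rm index}(x_p,f_{\eta_p})+1$ and $n-{\rm index}(x_p,f_{\eta_p})$) and their $a$-exponential counterparts $W^u_a(p)$, $W^s_a(p)$ (one dimension smaller), and apply the correct transversality clause from (A13) to each combination. Getting the strict versus non-strict inequalities right in (2b) versus (2c) hinges entirely on whether the endpoint contributes a $W$ or a $\mathring W$/$W_a$ in the relevant intersection, so I would organize case (2) around a table of which manifold appears at $x_-$ and which at $x_+$, then read off the dimension inequality each time. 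The rest is routine once this dictionary is set up.
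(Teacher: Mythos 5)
Your overall strategy matches the paper's: handle (1) directly from the definition of a regular slow solution, handle (2a) from (A12), and deduce the index inequalities in (2b)/(2c) by a dimension count using the appropriate transversality hypothesis and the dimensions of the various (un)stable manifolds, including the $W_a$ and $\mathring W$ variants at points of ${\mc C}_{\mc F}^{sing}$. Parts (1), (2a), (2c), and the degenerate subcases of (2b) are argued correctly.

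There is a genuine gap in your treatment of (2b) in the case where neither $p_-$ nor $p_+$ lies in ${\rm Crit}({\mc F})\cup{\mc C}_{\mc F}^{sing}$. You appeal to ``the standard fact that the index cannot increase along a nontrivial gradient trajectory'' when both $x_\pm$ are nondegenerate critical points of $f_\eta$. That statement is \emph{not} a general fact about gradient flows; it requires the unstable/stable manifolds of $f_\eta$ to meet transversally in $M\times\{\eta\}$, and for the particular slice $\eta=\eta_-=\eta_+$ of a handle-slide neither (A11) nor (A13) gives you that (those assumptions only apply when some endpoint is in ${\rm Crit}({\mc F})$ or ${\mc C}_{\mc F}^{sing}$). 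Indeed, handle-slides are precisely orbits with ${\rm index}(x_+,f_{\eta_+})={\rm index}(x_-,f_{\eta_-})$, which a na\"ive reading of ``the standard fact'' would exclude. The paper's fix is to invoke (A10), the \emph{parametric} transversality of $W^u({\mc C}_{\mc F}\setminus{\mc C}_{\mc F}^{sing})$ and $W^s({\mc C}_{\mc F}\setminus{\mc C}_{\mc F}^{sing})$ in $M\times{\mb R}$: since these have dimensions $k+1$ and $n-l+1$ in the $(n+1)$-dimensional ambient $M\times{\mb R}$, a nonempty transverse intersection forces $(k+1)+(n-l+1)\ge n+2$, i.e.\ $l\le k$. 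You list (A10) among the tools at the outset, but the argument you actually sketch for this case does not use it and as written would fail; you should replace the ``standard fact'' assertion with this (A10)-based dimension count in $M\times{\mb R}$.

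Two smaller points of sloppiness worth tightening: in (2c), when $p_+\in{\mc C}_{\mc F}^{sing}$ the approach to $p_+$ need not be exponential (the orbit may lie in $\mathring W^s(p_+)$), so the phrase ``genuine exponential stable manifold'' is misleading — the count should use $\dim W^s(p_+)=n-l$ together with (A13)-transversality of \emph{both} $W^s_a$ and $\mathring W^s$ pieces against $W^u(p_-)$, which still yields $l\le k-1$. And in (1b), if $p_+\in{\mc C}_{\mc F}^{sing}$ the orbit is not entirely contained in a single component of ${\mc C}_{\mc F}\setminus{\mc C}_{\mc F}^{sing}$ with equal endpoint indices; instead one reads the equality directly from condition (2) in the definition of a regular slow solution (the index on the open interval equals ${\rm index}(x_+,f_{\eta_+})$) together with local constancy of the index near the non-singular endpoint $p_-$.
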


\begin{proof} 
The result for slow solutions is immediate from their definition.

For fast solutions, (2)(a) follows from assumption (A12)

Let $p(t)$ be a fast solution, let ${\rm index}\, (x_-, f_{\eta_-})=k$, and ${\rm index}\, (x_+, f_{\eta_+})=l$.  (Of course $\eta_+=\eta_-$.)

If  $p_+\in{\rm Crit}({\mc F}) \cup  {\mc C}_{\mc F}^{sing}$, then  (A11) or (A13) implies that $W^u(p_-)$ and $W^s(p_+)$ intersect transversally within $M\times\{\eta_-\}$.  We have $\dim W^u(p_-)=k$, and $\dim W^s(p_+)=n-l$, so the intersection is nonempty provided $k + (n-l) \ge n+1$, i.e., $l \le k-1$. 

If  $p_-\in{\rm Crit}({\mc F})$, the same argument applies.  However, if $p_-\in {\mc C}_{\mc F}^{sing}$, then $\dim W^u(p_-)=k+1$.   We again have $\dim W^s(p_+)=n-l$, so the intersection is nonempty provided $(k+1) + (n-l) \ge n+1$, i.e., $l \le k$.

If  neither $p_-$ nor $p_+$ is in ${\rm Crit}({\mc F}) \cup  {\mc C}_{\mc F}^{sing}$, then we do not know that $W^u(p_-)$ and $W^s(p_+)$ intersect transversally within $M\times\{\eta_-\}$.  However, by (A10), $(k+1) + (n-l+1) \ge n+2$, so $l\le k$.
\end{proof}

\begin{prop}
\label{prop:indexequal}
Consider a fast-slow solution $${\ms X}= \left( p_0, \sigma_1, p_1, \sigma_2, p_2, \ldots, p_{n-1},\sigma_n, p_n \right)$$ from $p_- = (x_-, \eta_-) $ to $p_+=(x_+, \eta_+)$, with neither in  ${\mc C}_{\mc F}^{sing}$ and ${\rm index}\, (x_+, f_{\eta_+})\ge{\rm  index}(x_-, f_{\eta_-})=k$.  Then 
\begin{enumerate}
\item For all $i\geq 1$, ${\rm index}\, (x_{p_{i}}, f_{p_{i}})=k$, except for the case where $p_i \in {\mc C}_{\mc F}^{sing}$ and $\sigma_i$ is a fast solution, in which case ${\rm index}\, (x_{p_{i}}, f_{p_{i}})=k-1$. In particular, ${\rm index}\, (x_+, f_{\eta_+})=k$.
\item All slow solutions appearing in ${\ms X}$ are regular.  In particular, they are nontrivial.
\item If $p_i\in{\rm Crit}({\mc F})$, then $i=0$ or $i=n$.  In the first case, $p_0=p_- \in {\rm Crit}^-({\mc F})$ and $\sigma_1$ is slow; in the second case, $p_n = p_+ \in {\rm Crit}^+({\mc F})$ and $\sigma_n$ is slow.
\item The fast solutions are handle-slides or cusp solutions.
\end{enumerate}
\end{prop}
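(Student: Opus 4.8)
\medskip

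\noindent\textbf{Proof proposal.}
The plan is to propagate the index hypothesis ${\rm index}(x_+,f_{\eta_+})\ge k$ along the chain ${\ms X}$ by a leg-by-leg bookkeeping argument, using Proposition~\ref{prop:indexchange} for the change of index across each $\sigma_i$, Propositions~\ref{prop:singpar2} and~\ref{prop:sloweq} for the behavior of the slow flow near ${\mc C}_{\mc F}^{sing}$ and near ${\rm Crit}({\mc F})$, and the separation assumption (A12). Write $k_i={\rm index}(x_{p_i},f_{\eta_{p_i}})$, so that $k_0=k$; recall that $k_i$ is locally constant on ${\mc C}_{\mc F}\setminus{\mc C}_{\mc F}^{sing}$ and equals the smaller of the two adjacent branch values at a point of ${\mc C}_{\mc F}^{sing}$. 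First I would record a preliminary fact: for $1\le i\le n-1$ the point $p_i$ is an endpoint of a slow leg (by the alternating structure one of $\sigma_i,\sigma_{i+1}$ is slow), and the endpoints of a slow leg lie in ${\mc C}_{\mc F}^{sing}\cup{\rm Crit}({\mc F})$ --- a finite endpoint is a point of ${\mc C}_{\mc F}^{sing}$ (the slow flow crosses ${\mc C}_{\mc F}^{sing}$ in finite time, by Proposition~\ref{prop:singpar2} together with (A9), but approaches the ends of ${\mc C}_{\mc F}$ at infinity only as $t\to\pm\infty$ since $|\eta|$ grows linearly there), an infinite endpoint is a slow equilibrium, hence a point of ${\rm Crit}({\mc F})$ (Proposition~\ref{prop:sloweq}), and a trivial slow leg is a single point of ${\rm Crit}({\mc F})$. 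Hence every intermediate $p_i$ lies in ${\mc C}_{\mc F}^{sing}\cup{\rm Crit}({\mc F})$.

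Next I would prove the crude global bound $k_i\le k$ for all $i$. Across a fast leg $k_i\le k_{i-1}$ by Proposition~\ref{prop:indexchange}(2)(b); across a slow leg $k_i\le k_{i-1}+1$, with equality only if $p_{i-1}\in{\mc C}_{\mc F}^{sing}$ and $\sigma_i$ departs along the higher-index branch. If $j\ge1$ were minimal with $k_j>k$, the step from $j-1$ to $j$ would be such an increasing slow step, so $p_{j-1}\in{\mc C}_{\mc F}^{sing}$ (hence $j\ge2$, since $p_0=p_-\notin{\mc C}_{\mc F}^{sing}$) and $\sigma_{j-1}$ is a fast leg ending at $p_{j-1}$; then Proposition~\ref{prop:indexchange}(2)(c) gives $k_{j-2}\ge k_{j-1}+1=k_j>k$, contradicting minimality. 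Combined with the hypothesis $k_n\ge k$, this gives $k_n=k$.

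The remaining work is to read off (1)--(4). For (1): if $k_i<k$ for some $i$, I would take a maximal interval $(j_1,j_2)$ of indices with $k_i<k$, so that $k_{j_1}=k_{j_2}=k$; the recovering step from $j_2-1$ to $j_2$ must be an increasing slow step, so $p_{j_2-1}\in{\mc C}_{\mc F}^{sing}$, and the preceding fast leg $\sigma_{j_2-1}$ ending at $p_{j_2-1}$ drops the index by one (Proposition~\ref{prop:indexchange}(2)(c)), forcing $k_{j_2-2}\ge k$, hence $k_{j_2-2}=k$, hence $j_2-2=j_1$; thus the dip is the single point $p_{j_1+1}=p_{j_2-1}$, with $k_{j_1+1}=k-1$, $p_{j_1+1}\in{\mc C}_{\mc F}^{sing}$ and $\sigma_{j_1+1}$ fast. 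Conversely, a fast leg ending at a point of ${\mc C}_{\mc F}^{sing}$ always drops the index by one, so $k_i=k-1$ exactly when $p_i\in{\mc C}_{\mc F}^{sing}$ and $\sigma_i$ is fast, and $k_i=k$ otherwise; since $p_\pm\notin{\mc C}_{\mc F}^{sing}$ we get $k_n=k$, which is (1). Given (1), a slow leg $\sigma_i$ cannot be trivial --- a trivial slow leg would sit at a point of ${\rm Crit}({\mc F})$ flanked by two fast legs, which Proposition~\ref{prop:indexchange}(2)(c) and (1) together forbid --- and it must be regular, since a disagreement of the interior index with the regularity value at a singular endpoint would contradict (1) at $p_{i-1}$ or $p_i$ together with $k_i\in\{k-1,k\}$; this is (2). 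For (3): an interior $p_i\in{\rm Crit}({\mc F})$ is flanked by exactly one fast leg; if that leg ends at $p_i$, Proposition~\ref{prop:indexchange}(2)(c) forces $k_i<k$, contradicting $k_i=k$ from (1) (as $p_i\notin{\mc C}_{\mc F}^{sing}$); if it leaves $p_i$, then (1) at its far endpoint forces that endpoint to be a point of ${\mc C}_{\mc F}^{sing}$ with the same value of $\eta$ as $p_i$, impossible by (A12). Finally (4): by (3) and Proposition~\ref{prop:indexchange}(2)(a), a fast leg $\sigma_i$ with $2\le i\le n-1$ would have both endpoints in ${\mc C}_{\mc F}^{sing}$, which is excluded; so every fast leg is $\sigma_1$ or $\sigma_n$, one endpoint being $p_-$ or $p_+$ (non-singular, and non-critical by Proposition~\ref{prop:indexchange}(2)(c)) and the other being either the opposite endpoint $p_\pm$ --- giving a handle-slide between equal-index non-singular points --- or a point of ${\mc C}_{\mc F}^{sing}$, in which case the index relation recorded in (1) (namely $k$ versus $k-1$ for $\sigma_1$, and $k$ versus $k$ for $\sigma_n$) is exactly the one defining a cusp orbit in Subsection~\ref{sec:ff}.

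The hard part will be the bookkeeping in the last paragraph: one must control, without circularity, three interacting phenomena --- a slow leg ending at a singular point, a fast leg ending at or departing from a singular point, and trivial slow legs --- and show that only the configuration allowed by (1) can occur. The right order of operations is to establish $k_i\le k$ first (which uses only Proposition~\ref{prop:indexchange}(2)), then extract the single-point-dip structure, and only then deduce (2), (3), (4); trying to prove (2)--(4) before (1) leads to circular arguments.
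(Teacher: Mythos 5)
Your preliminary fact --- that every intermediate $p_i$ ($1\le i\le n-1$) lies in ${\mc C}_{\mc F}^{sing}\cup{\rm Crit}({\mc F})$ --- is false, and the error wrecks parts (2) and (4). The justification you give implicitly treats slow legs as \emph{maximal} solutions of the slow equation, terminable only when they hit ${\mc C}_{\mc F}^{sing}$, approach a slow equilibrium, or run off to infinity. But the definition of a slow solution in Subsection 5.1 places no such restriction: $\alpha_\pm$ is arbitrary, so a slow leg may end at any point of the closure of a component of ${\mc C}_{\mc F}\setminus{\mc C}_{\mc F}^{sing}$. In fact this flexibility is essential to the whole setup: by (A12), handle-slides happen at values of $\eta$ \emph{not} in the image of $\pi$ restricted to ${\rm Crit}({\mc F})\cup{\mc C}_{\mc F}^{sing}$, so both endpoints of a handle-slide are ordinary points of ${\mc C}_{\mc F}$, and the slow legs on either side of a handle-slide must terminate at ordinary points. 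Your conclusion in (4) --- that every fast leg is $\sigma_1$ or $\sigma_n$ --- is therefore simply wrong: a fast-slow solution can contain arbitrarily many interior handle-slides interspersed among slow legs. The correct content of (4) is weaker: each fast leg is a handle-slide or cusp orbit, and this follows (as in the paper) once (3) has excluded fast legs touching ${\rm Crit}({\mc F})$ and (1) has pinned down the index relations at the endpoints, since these are then exactly the defining configurations of a handle-slide or cusp orbit in Subsection 4.2.

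The argument for (2) is contaminated the same way (``a trivial slow leg would sit at a point of ${\rm Crit}({\mc F})$''). The paper instead derives nontriviality from (4) together with (A12): a trivial slow leg $\sigma_i$ would force the two flanking fast legs $\sigma_{i-1}$, $\sigma_{i+1}$ --- both handle-slides or cusp orbits by (4) --- to occur at the same value of $\eta$, contradicting the separation statement in (A12). The parts of your argument that avoid the false preliminary fact --- the crude global bound $k_i\le k$, the single-point-dip analysis giving (1), and the index-comparison argument for (3) --- are correct and amount to a more detailed spelling-out of the paper's terse proof; note also that you do not explicitly address the second clause of (3) (that $p_-\in{\rm Crit}^-({\mc F})$ with $\sigma_1$ slow, resp.\ $p_+\in{\rm Crit}^+({\mc F})$ with $\sigma_n$ slow, when the endpoints are critical), though it follows by the same index comparison combined with the prohibition on trivial $\sigma_1,\sigma_n$.
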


\begin{proof}
Along a fast-slow solution, ${\rm index}\, (x_{p_{i}}, f_{p_{i}}) \le {\rm index}\, (x_{p_{i-1}}, f_{p_{i-1}})$, except along a regular slow solution for which $p_{i-1} \in {\mc C}_{\mc F}^{sing}$ and $p_i\in {\mc C}_{\mc F} \setminus {\mc C}_{\mc F}^{sing}$; in this case  ${\rm index}\, (x_{p_{i}}, f_{p_{i}}) = {\rm index}\, (x_{p_{i-1}}, f_{p_{i-1}})+1$.  However, the index drops along the fast solution from $p_{i-2}$ to $p_{i-1}$.  Thus (1) holds; also, nontrivial slow solutions must be regular and (3) must hold, otherwise we would have ${\rm index}\, (x_+, f_{\eta_+})<{\rm  index}(x_-, f_{\eta_-})$.  (3) rules out fast solutions that start or end in ${\rm Crit}({\mc F})$; any remaining fast solutions are handle-slides or cusp solutions, so (4) holds.  Since handle-slides and cusp solutions occur at different values of $\eta$, slow solutions must be nontrivial, which completes the proof of (2).
\end{proof}

Now we describe fast-slow solutions between two critical points whose indices differ by 1.  Recall from Subsection \ref{sec:mh} that  ${\rm Crit}_k({\mc F})$ denotes the set of index $k$ critical points of ${\mc F}$. Let ${\rm Crit}_k^+({\mc F})$ (respectively ${\rm Crit}_k^-({\mc F})$) denote the set of equilibria in  ${\rm Crit}_k({\mc F})$ that are stable (respectively unstable) equilibria of the slow equation. Let ${\rm Crit}^\pm({\mc F}) = \cup_{k\geq 0} {\rm Crit}^\pm_k({\mc F})$.

\begin{prop}
\label{prop:singsolprop}
Let $p_\pm =( x_\pm, \eta_\pm) \in {\rm Crit}({\mc F})$ with $p_-\in {\rm Crit}_k({\mc F})$, $p_+ \in {\rm Crit}_{k-1}({\mc F})$. A fast-slow solution ${\ms X}= \left( p_0, \sigma_1, p_1, \sigma_2, p_2, \ldots, p_{n-1}, \sigma_n, p_n \right)$ from $p_-$ to $p_+$ has the following properties.
\begin{enumerate}
\item If $\sigma_i$ is a fast solution from $p_{i-1}=(x_{p_{i-1}},\eta_{p_{i-1}})$ to $p_{i}=(x_{p_{i}},\eta_{p_{i}})$ (of course $\eta_{p_i}=\eta_{p_{i-1}}$), then ${\rm index}\, (x_{p_i}, f_{\eta_{p_i}}) = {\rm index}\, (x_{p_{i-1}}, f_{\eta_{p_{i-1}}})$, unless:
\begin{enumerate}
\item $p_i\in {\mc C}_{\mc F}^{sing}$, in which case ${\rm index}\, (x_{p_i}, f_{\eta_{p_i}}) = {\rm index}\, (x_{p_{i-1}}, f_{\eta_{p_{i-1}}})-1$.
\item $i=1$, $p_0 = p_- \in {\rm Crit}_k^+({\mc F})$, in which case $\sigma_1$ is fast (so $\eta_{p_1}=\eta_p$) and ${\rm index}\, (x_{p_1}, f_{\eta_{p_1}}) = {\rm index}\, (x_-, f_{\eta_-})-1$.
\item $i=n$, $p_n = p_+ \in {\rm Crit}_{k-1}^-({\mc F})$, in which case $\sigma_n$  is fast  (so $\eta_q= \eta_{p_{n-1}}$) and ${\rm index}\, (x_+, f_{\eta_+}) = {\rm index}\, (x_{p_{n-1}}, f_{\eta_{p_{n-1}}})-1$.
\end{enumerate}
\item All slow solutions appearing in ${\ms X}$ are regular.  In particular, they are nontrivial.
\item For $i=1,\ldots,n-1$, $p_i \notin {\rm Crit}({\mc F})$.
\item The fast solutions in ${\ms X}$ are handle-slides or cusp solutions, except for $\sigma_1$ when  $p_- \in {\rm Crit}_k^+({\mc F})$ and $\sigma_n$ when $p_+ \in {\rm Crit}_{k-1}^-({\mc F})$.
\end{enumerate}
In addition:
\begin{enumerate}
\item[(I)] Suppose $p_- \in {\rm Crit}_k^+({\mc F})$ and  $p_+ \in {\rm Crit}_{k-1}^+({\mc F})$.  Then the odd $\sigma_i$ are fast, and $n$ is even.
\item[(II)] Suppose $p_- \in {\rm Crit}_k^+({\mc F})$ and  $p_+ \in {\rm Crit}_{k-1}^-({\mc F})$.  Then the odd $\sigma_i$ are fast, and $n$ is odd.
\item[(III)] Suppose $p_- \in {\rm Crit}_k^-({\mc F})$ and  $p_+ \in {\rm Crit}_{k-1}^+({\mc F})$.
Then the odd $\sigma_i$ are slow, and  $n$ is odd.
\item[(IV)] Suppose $p_- \in {\rm Crit}_k^-({\mc F})$ and  $p_+ \in {\rm Crit}_{k-1}^-({\mc F})$.
Then the odd $\sigma_i$ are slow, and  $n$ is even.
\end{enumerate}
\end{prop}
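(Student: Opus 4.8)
The plan is to deduce everything from Proposition~\ref{prop:indexequal} together with Proposition~\ref{twoindexprop2}; the only genuinely new input is that the endpoints $p_\pm$ now lie in ${\rm Crit}({\mc F})$ rather than in ${\mc C}_{\mc F}^{sing}$, so Proposition~\ref{prop:indexequal} does not apply to ${\ms X}$ on the nose. Throughout I use that, for $p=(x_p,\eta_p)\in{\rm Crit}({\mc F})$, Proposition~\ref{twoindexprop2} gives ${\rm index}(x_p,f_{\eta_p})={\rm index}(p,{\mc F})$ when $p$ is an attractor of the slow equation and ${\rm index}(x_p,f_{\eta_p})={\rm index}(p,{\mc F})-1$ when $p$ is a repeller, so in particular ${\rm index}(x_p,f_{\eta_p})\in\{{\rm index}(p,{\mc F})-1,\,{\rm index}(p,{\mc F})\}$.

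\emph{Step 1: the pattern (I)--(IV).} I first show that $\sigma_1$ is fast iff $p_-\in{\rm Crit}^+({\mc F})$, and $\sigma_n$ is fast iff $p_+\in{\rm Crit}^-({\mc F})$. The ``only if'' directions are immediate: no nontrivial slow solution leaves an attractor or enters a repeller of the slow equation, and $\sigma_1,\sigma_n$ cannot be trivial slow solutions. For the converse, suppose $p_-\in{\rm Crit}^-({\mc F})$ but $\sigma_1$ is fast. Then by Proposition~\ref{prop:indexchange}(2)(a),(c) we have $p_1\notin{\rm Crit}({\mc F})\cup{\mc C}_{\mc F}^{sing}$ and ${\rm index}(x_{p_1},f_{\eta_{p_1}})\le{\rm index}(x_-,f_{\eta_-})-1=k-2$. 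Deleting $p_0,\sigma_1$ (and any trivial slow solution that then occupies the first slot) yields a fast-slow solution ${\ms X}^\flat$ from $p_1$ to $p_+$ with both endpoints outside ${\mc C}_{\mc F}^{sing}$ and ${\rm index}(x_{p_1},f_{\eta_{p_1}})\le k-2\le{\rm index}(x_+,f_{\eta_+})$, the last inequality because ${\rm index}(x_+,f_{\eta_+})\ge{\rm index}(p_+,{\mc F})-1=k-2$. Proposition~\ref{prop:indexequal} then applies to ${\ms X}^\flat$: its part~(3) forces $p_+\in{\rm Crit}^+({\mc F})$ and its part~(1) gives ${\rm index}(x_+,f_{\eta_+})={\rm index}(x_{p_1},f_{\eta_{p_1}})\le k-2$, whereas $p_+\in{\rm Crit}^+({\mc F})$ forces ${\rm index}(x_+,f_{\eta_+})={\rm index}(p_+,{\mc F})=k-1$, a contradiction. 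The claim for $\sigma_n$ is proved the same way, deleting $p_n,\sigma_n$ and applying Proposition~\ref{prop:indexequal} to the resulting solution from $p_-$ to $p_{n-1}$. Since fast and slow segments alternate, these two dichotomies determine, in each of the four cases, whether the odd $\sigma_i$ are fast or slow and the parity of $n$, which is exactly (I)--(IV).

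\emph{Step 2: properties (1), (2), (4).} In each of the four cases, trim the fast endpoint segments: delete $\sigma_1$ if $p_-\in{\rm Crit}^+({\mc F})$ and $\sigma_n$ if $p_+\in{\rm Crit}^-({\mc F})$, obtaining a fast-slow solution ${\ms X}^*$. Its endpoints lie outside ${\mc C}_{\mc F}^{sing}$ (a trimmed endpoint lies outside ${\rm Crit}({\mc F})\cup{\mc C}_{\mc F}^{sing}$ by Proposition~\ref{prop:indexchange}(2)(a); an untrimmed endpoint is some $p_\pm\in{\rm Crit}({\mc F})$, disjoint from ${\mc C}_{\mc F}^{sing}$ by (A9)), and a short computation with Proposition~\ref{twoindexprop2} and Proposition~\ref{prop:indexchange}(2)(c) shows that in all four cases the index of the right endpoint of ${\ms X}^*$ is at least that of the left. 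Hence Proposition~\ref{prop:indexequal} applies to ${\ms X}^*$ and yields (1), (2), (4) for all of its segments, together with the fact that the common interior index of ${\ms X}^*$ equals $k-1$ (it is squeezed between the left endpoint index, which is $\le k-1$, and $k-1$). Reattaching the trimmed segments: each is a fast solution starting at, or ending at, a point of ${\rm Crit}({\mc F})$, hence by Proposition~\ref{prop:indexchange}(2)(c) drops the relevant ${\rm index}(x_\cdot,f_{\eta_\cdot})$ by at least one, and the squeeze just obtained forces the drop to be exactly one -- this is (1)(b)--(c). Such a segment is neither a handle-slide (whose asymptotic limits are generic points of components of ${\mc C}_{\mc F}\setminus{\mc C}_{\mc F}^{sing}$) nor a cusp solution (one of whose limits lies in ${\mc C}_{\mc F}^{sing}$), so $\sigma_1$ and $\sigma_n$ are the exceptions in (4); the remaining fast segments are those of ${\ms X}^*$, handled by Proposition~\ref{prop:indexequal}(4). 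The slow segments of ${\ms X}$ are exactly those of ${\ms X}^*$, so (2), including the nontriviality of slow solutions (which rests on (A12): handle-slides and cusp solutions occur at pairwise distinct $\eta$, cusp solutions at $\eta$-values of points of ${\mc C}_{\mc F}^{sing}$), transfers from ${\ms X}^*$. The few degenerate small-$n$ configurations are checked directly.

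\emph{Step 3 and the main difficulty.} For property (3) I first note that any fast-slow solution from a critical point $p$ of ${\mc F}$ to a distinct critical point $q$ of ${\mc F}$ satisfies ${\rm index}(p,{\mc F})\ge{\rm index}(q,{\mc F})+1$: trimming its fast endpoint segments as in Step~2 either leaves a fast-slow solution between points outside ${\mc C}_{\mc F}^{sing}$ to which Proposition~\ref{prop:indexequal}(1),(3) applies and yields this inequality, or else the hypothesis of Proposition~\ref{prop:indexequal} already fails in the direction of a larger index gap, which gives the inequality directly. Now if $p_i\in{\rm Crit}({\mc F})$ for some $0<i<n$, then exactly one of the two segments of ${\ms X}$ meeting $p_i$ is fast (by alternation), and discarding a trivial slow segment at the junction if one is present, ${\ms X}$ splits into fast-slow solutions from $p_-$ to $p_i$ and from $p_i$ to $p_+$; the inequality gives ${\rm index}(p_-,{\mc F})\ge{\rm index}(p_i,{\mc F})+1\ge{\rm index}(p_+,{\mc F})+2$, contradicting ${\rm index}(p_-,{\mc F})-{\rm index}(p_+,{\mc F})=1$, so no such $p_i$ exists. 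The main obstacle throughout is the endpoint bookkeeping of Steps~1--2: carrying out the attractor/repeller dichotomy via Proposition~\ref{twoindexprop2}, verifying in each of the four cases (and each small-$n$ exception) that the trimmed solution ${\ms X}^*$ genuinely satisfies the hypotheses of Proposition~\ref{prop:indexequal}, and handling trivial slow solutions that can appear at a junction after trimming; once these reductions are set up, the index inequalities are routine arithmetic.
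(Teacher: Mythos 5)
Your proof is correct, and it rests on the same key lemma as the paper's (Proposition~\ref{prop:indexequal}, applied after trimming the endpoint fast segments), but you organize the deduction differently. The paper proceeds by the four cases (I)--(IV) directly, proving (1)--(4) within each case (and omitting two of them as ``similar''); you instead establish the $\sigma_1/\sigma_n$ dichotomy first as a standalone fact (Step~1), then give a single unified trimming argument for (1), (2), (4) across all four cases (Step~2), and then deduce (3) from a slightly more general index inequality -- that a fast-slow solution between two distinct points of ${\rm Crit}({\mc F})$ drops ${\rm index}(\cdot,{\mc F})$ by at least one -- rather than reading it off Proposition~\ref{prop:indexequal}(3) directly in each case. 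Both routes are sound; yours is a bit more modular and the Step~3 inequality is a reusable fact, while the paper's case-by-case treatment makes the endpoint bookkeeping more concrete. One organizational quibble is that your ``only if'' and ``if'' labels in Step~1 are swapped relative to the mathematical content, though the argument itself is right. Your treatment of trivial slow solutions at the trimming boundary and of small-$n$ degenerate configurations is flagged but not carried out in detail; this matches the level of rigor in the paper's own proof (which also does not discuss trivial slow solutions at the junction and omits two cases), so it is not a gap relative to the paper, but in a fully self-contained writeup those checks should be spelled out -- in particular, after trimming a fast $\sigma_1$ one must verify that the new first segment is a nontrivial slow solution, or else trim once more and re-verify the Proposition~\ref{prop:indexequal} hypotheses at the new start point, and in case (II) one should observe (via (A12)) that $\sigma_2$ cannot be trivial when $n=3$.
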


\begin{proof}
We prove (1)--(4) separately for the cases (I) and (II). The proofs for cases (III) and (IV) are similar so we omit them. We remark that in one of the cases, case (III), we can have $n=1$, which means ${\ms X}$ may contain a single slow solution and no fast solutions.

Proof in case (I).  In this case, ${\rm index}\, (x_-, f_{\eta_-})=k$ and ${\rm index}\, (x_+, f_{\eta_+})=k-1$.  Since $p_-$ is an attractor for the slow equation, the first solution $\sigma_1$ must be fast, so the odd $\sigma_i$ are fast.  By Proposition \ref{prop:indexchange} (2), $p_1 \notin {\rm Crit}({\mc F}) \cup  {\mc C}_{\mc F}^{sing}$ and ${\rm index}\, (x_{p_1},f_{\eta_{p_1}})\le k-1$.  Consider the portion of the fast-slow solution from $p_1$ to $q$.  Then Proposition \ref{prop:indexequal} implies that  ${\rm index}\, (x_{p_1},f_{\eta_{p_1}})= k-1$, conclusion (2) of  Proposition \ref{prop:singsolprop}  holds, and $p_i \notin {\rm Crit}^\pm({\mc F})$ for $i=2,\ldots,n-1$ so (3) holds.  Proposition \ref{prop:indexequal} also implies that all fast solutions except the first are handle-slides or cusp solutions, so (4) holds.

Since $p_+$ is an attractor for the slow equation, {\it a priori} the last solution could be slow or fast.  However, if the last solution were fast, from Proposition \ref{prop:indexchange} (2) we would have ${\rm index}\, (x_+, f_{\eta_+})<{\rm index}\, (x_{p_{n-1}},f_{\eta_{p_{n-1}}})=k-1$, contradiction.  Therefore the last solution is slow, so $n$ is even. Then (1) and (4) follow from Proposition \ref{prop:indexequal}.

Proof in case (II).  As with case (I), ${\rm index}\, (x_-, f_{\eta_-})=k$; $\sigma_1$ must be fast, so the odd $\sigma_i$ are fast; $p_1 \notin {\rm Crit}({\mc F}) \cup  {\mc C}_{\mc F}^{sing}$; and ${\rm index}\, (x_{p_1},f_{\eta_{p_1}})\le k-1$. Since $p_+$ is a repeller for the slow equation, ${\rm index}\, (x_+, f_{\eta_+})=k-2$, and the last solution must be fast. Then $p_{n-1} \notin  {\rm Crit}({\mc F}) \cup  {\mc C}_{\mc F}^{sing}$, and by Proposition \ref{prop:indexchange} (2), ${\rm index}\, (x_{p_{n-1}},f_{\eta_{p_{n-1}}})\ge k-1$.  As in the previous argument, for the portion of the fast-slow solution from $p_1$ to $p_{n-1}$, apply Proposition \ref{prop:indexequal}; we obtain (1)--(4). Since the first and last solutions are fast,  $n$ is odd.
\end{proof}

\subsection{The chain complex}

Now let $p, q \in {\rm Crit} ({\mc F})$. We denote by $\wt{\mc N}^0 (p, q)$ the space of all fast-slow solutions from $p$ to $q$ and ${\mc N}^0(p, q)$ the space of all fast-slow orbits from $p$ to $q$.

\begin{prop}
If $p, q\in {\rm Crit} ({\mc F})$ and ${\rm index}\, (q, {\mc F})= {\rm index}\, (p, {\mc F}) - 1$, then $\#{\mc N}^0(p, q) < +\infty$.
\end{prop}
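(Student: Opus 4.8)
The plan is to show that every fast-slow orbit from $p$ to $q$ is assembled from finitely many pieces drawn from a finite stock, with only finitely many choices for each piece, so that $\#{\mc N}^0(p,q)$ is a finite sum of finite products of finite numbers. The structural input is Proposition \ref{prop:singsolprop}, which I will combine with an energy/properness argument to bound the combinatorial length.

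First I would record that ${\mc F}$ strictly decreases along every nontrivial fast or slow solution. Along a fast solution $\eta$ is constant and $f_\eta$ strictly decreases; along a slow solution $p(t)=(x(\eta(t)),\eta(t))$ one computes $\tfrac{d}{dt}{\mc F}(p(t)) = \mu(x(\eta(t)))\,\eta'(t) = -\mu(x(\eta(t)))^2 \le 0$, strictly negative except where $\mu=0$, i.e.\ at points of ${\rm Crit}({\mc F})$, and the interior of a nontrivial slow solution avoids ${\rm Crit}({\mc F})$ since an equilibrium of the scalar slow equation cannot be reached in finite time. Because for $p,q$ of adjacent index all slow solutions occurring in a fast-slow solution are regular, hence nontrivial (Proposition \ref{prop:singsolprop}(2)), it follows that for any ${\ms X}=(p_0,\sigma_1,\ldots,\sigma_n,p_n)\in\wt{\mc N}^0(p,q)$ we get ${\mc F}(p)={\mc F}(p_0)>{\mc F}(p_1)>\cdots>{\mc F}(p_n)={\mc F}(q)$; in particular the $p_i$ are pairwise distinct and the whole trajectory of ${\ms X}$ lies in the set $\mathcal K:=\{p'\in{\mc C}_{\mc F} : {\mc F}(q)\le {\mc F}(p')\le {\mc F}(p)\}$. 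Next I would check that $\mathcal K$ is compact: on each end of ${\mc C}_{\mc F}$, which by the description in Subsection \ref{sec:slow} is asymptotic to ${\rm Crit}(\mu)\times\{\pm\infty\}$, one has $x\to y\in{\rm Crit}(\mu)$, $|\eta|\to\infty$, and $|x-y|=O(1/|\eta|)$, so ${\mc F}(x,\eta)=f(x)+\eta\mu(x)=f(y)+\eta\mu(y)+o(1)$; since $0$ is a regular value of $\mu$ we have $\mu(y)\neq0$, hence ${\mc F}\to\pm\infty$ along every end and ${\mc F}|_{{\mc C}_{\mc F}}$ is proper. Thus $\mathcal K$ contains only finitely many points of ${\mc C}_{\mc F}^{sing}$ and of ${\rm Crit}({\mc F})$, and $\pi(\mathcal K)\subset{\mb R}$ is a bounded interval.

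With this in hand I would assemble the finite stock of pieces. By a compactness argument of the type already invoked in the excerpt to bound the number of cusp orbits between a fixed $p$ and $q$ (bounded energy, compactness of $M$, together with the transversality assumptions (A10) and (A13)), the set of handle-slide orbits and the set of cusp orbits whose $\eta$-coordinate lies in the compact interval $\pi(\mathcal K)$ are both finite; let $\mathcal B\subset{\mc C}_{\mc F}$ be the (finite) set consisting of all of their endpoints, together with ${\rm Crit}({\mc F})\cap\mathcal K$ and the finitely many critical points of the Morse functions $f_{\eta_p}$ and $f_{\eta_q}$. Now Proposition \ref{prop:singsolprop} does the bookkeeping: the intermediate breakpoints $p_1,\ldots,p_{n-1}$ avoid ${\rm Crit}({\mc F})$, the slow solutions are regular, and each fast $\sigma_i$ is a handle-slide, a cusp solution, or — only for $i=1$ with $p_-\in{\rm Crit}^+({\mc F})$ or $i=n$ with $p_+\in{\rm Crit}^-({\mc F})$ — an index-one-drop trajectory of $f_{\eta_p}$ or $f_{\eta_q}$. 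Since the pieces alternate fast/slow, each $p_i$ with $1\le i\le n-1$ is an endpoint of an adjacent fast piece, and that fast piece is a handle-slide or cusp whenever $2\le i\le n-2$, and for $i=1,n-1$ it is either such or a special first/last piece whose endpoint is a critical point of $f_{\eta_p}$ or $f_{\eta_q}$; in every case $p_i\in\mathcal B$. As the $p_i$ are distinct, $n\le |\mathcal B|+1$.

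Finally I would count. For each of the finitely many values of $n$, each sequence $(p_0,\ldots,p_n)$ with $p_0=p$, $p_n=q$, $p_i\in\mathcal B$, and each fast/slow pattern (in fact the pattern is pinned down by $(p,q)$ and $n$ via cases (I)--(IV) of Proposition \ref{prop:singsolprop}), the number of fast-slow orbits of that combinatorial type is finite: between two given points of ${\mc C}_{\mc F}$ there is at most one slow orbit, because the slow equation is a scalar ODE on a single component of ${\mc C}_{\mc F}\setminus{\mc C}_{\mc F}^{sing}$ and the flow-arc joining two of its points is unique, and for each fast $\sigma_i$ the relevant space of connecting orbits is zero-dimensional and finite by the same compactness argument (using Morse--Smaleness from (A11) for the special first/last pieces, and (A10)/(A13) for handle-slides and cusp solutions). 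Summing these finitely many finite contributions yields $\#{\mc N}^0(p,q)<+\infty$. The hard part is the finiteness, inside the compact region $\mathcal K$, of handle-slides, cusp orbits, and the zero-dimensional moduli of index-one-drop fast orbits: this is exactly the Floer--Morse-type compactness input the paper already relies on, and I would carry it out the same way, taking care that the energy bound on fast pieces comes from the uniform bound on $f_\eta$ over the compact $M$ together with the bounded range $\pi(\mathcal K)$ of $\eta$.
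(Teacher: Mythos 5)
Your proof is correct and follows essentially the same route as the paper's: reduce via Proposition~\ref{prop:singsolprop} to the finiteness of handle-slides and cusp orbits in the compact region ${\mc F}^{-1}([{\mc F}(q),{\mc F}(p)])$, which rests on the asymptotic description of ${\mc C}_{\mc F}$ and the transversality assumptions (A10), (A11), (A13). You simply spell out the steps the paper leaves implicit — the strict decrease of ${\mc F}$, properness of ${\mc F}|_{{\mc C}_{\mc F}}$, uniqueness of the slow arc joining two points, and the resulting bound $n\le|\mathcal B|+1$ on the combinatorial length — so your version is a more detailed account of the same argument rather than a different one.
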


\begin{proof}
By Proposition \ref{prop:singsolprop}, we only need to show that there are finitely many handle-slides or cusp orbits contained in ${\mc F}^{-1} \left(  [ {\mc F}(q), {\mc F}(p)] \right)$. But this follows from our description of ${\mc C}_{\mc F}$ at the beginning of Section \ref{section4} and the transversality assumption (A10), (A11), (A13).
\end{proof}

Now we define a chain complex $({\mc C}^0, \partial^0)$ of ${\mb Z}_2$-modules. It has the same generators and gradings as ${\mc C}^\lambda$. Its boundary operator $\partial^0: {\mc C}^0_k  \to  {\mc C}^0_{k-1}$ is defined by 
\begin{align}
\langle \partial^0 p, q \rangle = \# {\mc N}^0(p, q) \ {\rm mod}\ 2.
\end{align}

The main theorem of this section and of the paper is:

\begin{thm}\label{thm24} $({\mc C}^0, \partial^0)$ is a chain complex, i.e., $\partial^0 \circ \partial^0 = 0$. Moreover, the homology $H_k( {\mc C}^0, \partial^0)$ is canonically isomorphic to $H_{k-1}( \mu^{-1}(0), {\mb Z}_2)$.
\end{thm}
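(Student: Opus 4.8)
The plan is to deduce Theorem \ref{thm24} from the $\lambda\to\infty$ analysis of Section \ref{section_infinity} by showing that, for $\lambda>0$ sufficiently small and $\lambda\in\Lambda^{reg}$, the complex $({\mc C}^0,\partial^0)$ is canonically isomorphic to $({\mc C}^\lambda,\partial^\lambda)$. Granting this, $\partial^0\circ\partial^0=\partial^\lambda\circ\partial^\lambda=0$, and by Corollary \ref{cor32} together with the $\lambda$-independence of $H^\lambda$ on $\Lambda^{reg}$ we obtain $H_k({\mc C}^0,\partial^0)\cong H^\lambda_k\cong H_{k-1}(\mu^{-1}(0);{\mb Z}_2)$; independence of the chosen small $\lambda$ follows from the compatibility of the canonical maps $\Phi_{\lambda_1,\lambda_2}$. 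Since the generators and gradings of ${\mc C}^0$ and ${\mc C}^\lambda$ already coincide, the content reduces to producing, for each $p,q\in{\rm Crit}({\mc F})$ with ${\rm index}(p,{\mc F})-{\rm index}(q,{\mc F})=1$, a bijection ${\mc N}^0(p,q)\simeq{\mc M}^\lambda(p,q)$ that preserves counting mod $2$, so that $\partial^0=\partial^\lambda$.

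First I would prove the compactness/convergence statement that is the $\lambda\to0$ analogue of Proposition \ref{prop33}: given $\lambda_\nu\to0$ and $\wt p_\nu\in{\mc M}^{\lambda_\nu}(p,q)$, after passing to a subsequence and inserting time shifts, $\wt p_\nu$ converges to a fast-slow orbit in $\ov{\mc N}^0(p,q)$. The $\lambda$-independent energy identity ${\mc F}(p)-{\mc F}(q)=\|\wt x'\|_{L^2}^2+\lambda^2\|\mu(\wt x)\|_{L^2}^2$ forces the slow portions of the limit to lie in ${\mc C}_{\mc F}$ and the fast portions (the ``cascades'') to have bounded $\eta$-extent; Lemma \ref{lemma_smallenergy} rules out accumulation of short fast orbits, and the index bookkeeping of Proposition \ref{prop:indexequal} and Proposition \ref{prop:singsolprop} pins down the combinatorial type of the limiting fast-slow orbit. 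Conversely, and this is the main step, I would show that near each isolated fast-slow orbit ${\ms X}=(p_0,\sigma_1,\ldots,\sigma_n,p_n)$ from $p$ to $q$ there is, for all sufficiently small $\lambda>0$, a \emph{unique} orbit of (\ref{eqn15a})--(\ref{eqn15b}) from $p$ to $q$. This is a gluing/tracking argument in geometric singular perturbation theory: along each slow segment the true orbit shadows ${\mc C}_{\mc F}$ and is governed by the slow equation; along each fast segment it makes a jump $C^1$-close to a handle-slide, a cusp orbit, or a short fast orbit; and the passages between regimes are controlled by exchange lemmas. Where ${\mc C}_{\mc F}$ is normally hyperbolic (away from ${\mc C}_{\mc F}^{sing}$) the classical exchange lemma of \cite{Jones_GSPT} suffices; at the folds $p_i\in{\mc C}_{\mc F}^{sing}$, where a transverse eigenvalue vanishes, one invokes the more general exchange lemma of \cite{Schecter_II} in the extended form of \ref{app:gspt}, using the normal form (\ref{cmdex})--(\ref{cmdeeta}) of Proposition \ref{prop:singpar2} and the $a$-exponential (un)stable manifolds $W^u_a(p),W^s_a(p)$ and cusp orbits of Subsection \ref{sec:ff}. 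The behavior near the endpoints $p,q$ splits into the cases (I)--(IV) of Proposition \ref{prop:singsolprop} according to whether each of $p,q$ is an attractor or repeller of the slow equation: this dictates whether $\sigma_1$ and $\sigma_n$ are fast or slow, and near an attractor/repeller critical point one uses the hyperbolicity of the slow equilibrium (Proposition \ref{prop:sloweq}) and the Morse-Smale hypothesis (A11).

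With both halves in place the argument concludes as in the proof of Theorem \ref{thm36}: the gluing construction gives an injective map from isolated fast-slow orbits to orbits of (\ref{eqn15a})--(\ref{eqn15b}); surjectivity follows from the convergence statement, since a sequence of orbits in ${\mc M}^{\lambda_\nu}(p,q)$ outside the image would subconverge to some fast-slow orbit, contradicting local uniqueness. Hence ${\mc N}^0(p,q)\simeq{\mc M}^\lambda(p,q)$ with matching counts, $\partial^0=\partial^\lambda$, and $({\mc C}^0,\partial^0)\cong({\mc C}^\lambda,\partial^\lambda)$ for small $\lambda\in\Lambda^{reg}$; the homological statement then follows from Corollary \ref{cor32}.

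The hard part will be the gluing near the singular points of the slow manifold. Controlling the passage of an orbit of (\ref{eqn15a})--(\ref{eqn15b}) near a point of ${\mc C}_{\mc F}^{sing}$, where ${\mc C}_{\mc F}$ has a fold and normal hyperbolicity is lost, is exactly the situation in which the classical exchange lemma fails; one must combine the normal form (\ref{cmdex})--(\ref{cmdeeta}), the transversality hypotheses (A10), (A12), (A13), the splitting into $a$-exponential versus polynomial (cusp) asymptotics, and the extended exchange lemma of \ref{app:gspt}, and then verify that all estimates close up uniformly as $\lambda\to0$ and that no spurious connections are created by short fast orbits. Once the chain-level isomorphism ${\mc C}^0\cong{\mc C}^\lambda$ is established, the identification of $H_*({\mc C}^0,\partial^0)$ with $H_{*-1}(\mu^{-1}(0);{\mb Z}_2)$ is immediate from Section \ref{section_infinity}.
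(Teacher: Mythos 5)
Your proposal reproduces the paper's own strategy: a compactness theorem showing that orbits in ${\mc M}^{\lambda_\nu}(p,q)$ with $\lambda_\nu\to0$ subconverge to fast-slow orbits (Theorem \ref{compactnessthm}), a gluing theorem via exchange lemmas — classical near normally hyperbolic parts of ${\mc C}_{\mc F}$ and the generalized version of \ref{app:gspt} near ${\mc C}_{\mc F}^{sing}$ — producing a bijection ${\mc N}^0(p,q)\simeq{\mc M}^\lambda(p,q)$ for small $\lambda$ (Theorem \ref{thm_glue}), and then the chain-level identification $\partial^0=\partial^\lambda$ followed by Corollary \ref{cor32}. This matches the paper in approach and in the key supporting ingredients (Lemma \ref{lemma_smallenergy}, Propositions \ref{prop:indexequal}, \ref{prop:singsolprop}, \ref{prop:singpar2}, the assumptions (A10)--(A13), and the $a$-exponential/cusp dichotomy).
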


The proof of this theorem is carried out in the following way. Let $p, q\in {\rm Crit}({\mc F})$. First we prove that for small $\lambda>0$, any orbit in ${\mc M}^\lambda(p, q)$ is close to a fast-slow orbit in ${\mc N}^0(p, q)$ (the compactness theorem, Theorem \ref{compactnessthm}). Then we restrict to the case ${\rm index}(p, {\mc F}) - {\rm index} (q, {\mc F}) = 1$ and show that for each small $\lambda>0$, each orbit in ${\mc N}^0(p, q)$ is close to exactly one orbit in ${\mc M}^\lambda(p, q)$ (the gluing theorem, Theorem \ref{thm_glue}).  The resulting  one-to-one correspondence implies that $({\mc C}^0, \partial^0)$ is isomorphic to $({\mc C}^\lambda, \partial^\lambda)$ for small $\lambda \in \Lambda^{reg}$. Then by Corollary \ref{cor32}, we obtain the isomorphism.

\subsection{The compactness theorem}\label{subsection_compactness}

\begin{defn}
\label{defn_convergence}
Let $p, q\in {\rm Crit} ({\mc F})$. Let $\lambda_\nu \in {\mb R}^+$ be a sequence such that $\lim_{\nu\to \infty} \lambda_\nu=0$. We say that a sequence of (parametrized)  $\wt{p}_\nu = ( \wt{x}_\nu, \wt\eta_\nu) \in \wt{\mc M}^{\lambda_\nu} ( p, q)$ {\em converges} to a parametrized fast-slow solution $${\ms X}= \left(p_0, \sigma_1, p_1, \sigma_2, \ldots, p_{n-1}, \sigma_n, p_n \right) \in \wt{\mc N}^0 (p, q),$$ if the following hold.
\begin{enumerate}
\item For each fast solution $\sigma_i$ contained in ${\ms X}$, there exists $t_{i, \nu}\in {\mb R}$ such that the sequence of maps
$\wt{p}_\nu( \cdot+ t_{i, \nu})$ converges to $\sigma_i$ in the $C^\infty_{loc}$-topology.
\item  For each slow solution $\sigma_j$ contained in ${\ms X}$, there exists a sequence of intervals $I_{j, \nu} \subset {\mb R}$ that satisfies the following two conditions.
\begin{enumerate}
\item For every $t_{i, \nu}$ in the first condition, we have $$\lim_{\nu \to \infty} d(I_{j, \nu}, t_{i,\nu}) = \infty.$$ \item $\lim_{\nu \to \infty} \wt{p}_\nu(I_{j, \nu}) = \sigma_j$ in the Hausdorff topology.
\end{enumerate}
\end{enumerate}
\end{defn}
It is easy to see that the limit is unique in ${\mc N}^0(p, q)$ and only depends on the sequence of orbits $[ \wt{p}_\nu ]$, not the representatives. Moreover, the image of $\wt{p}_\nu$ converges to the image of ${\ms X}$ in the Hausdorff topology.

In the remainder of this section we prove the following theorem.

\begin{thm}\label{compactnessthm}
Suppose $p, q \in {\rm Crit} ({\mc F})$ and $\lambda_\nu\to 0^+$ be a sequence of real numbers. Suppose $\wt{p}_\nu= (\wt{x}_\nu, \wt\eta_\nu) \in \wt{\mc M}^{\lambda_\nu} ( p , q)$. Then there exists a subsequence (still indexed by $\nu$), and a fast-slow solution ${\ms X}= \left( p_0, \sigma_1, p_1, \sigma_2, \cdots, p_{n-1}, \sigma_n, p_n \right) \in \wt{\mc N}^0(p, q)$, such that 
$\wt{p}_\nu$ converges to ${\ms X}$ in the sense of Definition \ref{defn_convergence}.
\end{thm}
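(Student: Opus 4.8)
The plan is to extract the fast-slow limit by a standard "bubbling off" argument adapted to the present fast-slow setting, in which the slow manifold ${\mc C}_{\mc F}$ fails to be normally hyperbolic at ${\mc C}_{\mc F}^{sing}$. First I would establish the basic a priori bounds: the energy $E = {\mc F}(p) - {\mc F}(q)$ is $\lambda$-independent, and by Lemma \ref{lemma23} the orbits $\wt p_\nu$ stay in a fixed compact set $M \times [-K, K]$. From the energy identity $E = \|\wt x_\nu'\|_{L^2}^2 + \lambda_\nu^2 \|\mu(\wt x_\nu)\|_{L^2}^2$ one reads off $\|\mu(\wt x_\nu)\|_{L^2} \le E^{1/2}/\lambda_\nu$ and $\|\wt x_\nu'\|_{L^2} \le E^{1/2}$, and since $\wt\eta_\nu' = -\lambda_\nu^2\mu(\wt x_\nu)$ the slow variable varies slowly. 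The idea is then to locate the "fast" pieces as the places where $\|\wt x_\nu'\|$ is not small, and the "slow" pieces as the long intervals where the orbit shadows ${\mc C}_{\mc F}$.

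Next I would carry out the inductive center-of-mass / rescaling procedure. Fix a small $\delta>0$ coming from Lemma \ref{lemma_smallenergy}. At each step, if there is a time interval on which $\wt p_\nu$ accumulates energy at least $\delta/2$ but does not shadow ${\mc C}_{\mc F}$, choose translates $t_{i,\nu}$ so that $\wt p_\nu(\cdot + t_{i,\nu})$ has its "energy density" suitably centered; by Arzel\`a--Ascoli (using the uniform $C^1$, hence $C^\infty_{loc}$, bounds obtained by bootstrapping the ODE, exactly as in Proposition \ref{prop33}) a subsequence converges in $C^\infty_{loc}$. Because $\lambda_\nu \to 0$, the limit solves (\ref{eqn15a})--(\ref{eqn15b}) with $\lambda = 0$, i.e.\ is a fast solution; $\wt\eta$ is constant on it, and it is nontrivial by the energy lower bound. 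The endpoints of this fast solution lie in ${\mc C}_{\mc F}$ by the finite-energy property together with the description of ${\mc C}_{\mc F}$ (and its behavior as $|\eta|\to\infty$) from Section \ref{section4}; here one uses (A6), (A7) and the fact that finite-energy fast trajectories limit on equilibria, with the polynomial-decay (cusp) case allowed near ${\mc C}_{\mc F}^{sing}$. Since the total energy is finite, this extraction terminates after finitely many steps, producing fast solutions $\sigma_{i_1}, \dots, \sigma_{i_m}$ with points $p_{i_j} \in {\mc C}_{\mc F}$, occurring at distinct (and separated, after passing to a further subsequence) time scales, so that $\lim_\nu d(I_{j,\nu}, t_{i,\nu}) = \infty$ whenever the two are not associated to the same bubble.

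Then I would analyze the complementary long intervals $I_{j,\nu}$, where no energy bubbles off: on these, $\|\wt x_\nu'\|$ is uniformly small, so by normal hyperbolicity of the fast flow away from ${\mc C}_{\mc F}^{sing}$ the orbit is uniformly close to ${\mc C}_{\mc F}$, and after reparametrizing by $\eta$ (or $x_n$ near ${\mc C}_{\mc F}^{sing}$, using Proposition \ref{prop:singpar2} / the center-manifold description) the restricted equation is an $O(\lambda_\nu^2)$ perturbation — divided through, an $o(1)$ perturbation — of the slow equation $\eta' = -\mu(x(\eta))$. Hence on each such interval $\wt p_\nu$ Hausdorff-converges to a slow solution $\sigma_j$; the passage through ${\mc C}_{\mc F}^{sing}$ is handled exactly by the exchange-lemma estimates reviewed in \ref{app:gspt}, which is precisely what replaces the classical normally hyperbolic exchange lemma used in \cite{Banyaga_Hurtubise}. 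Finally I would assemble the pieces in the correct order, inserting trivial slow solutions where two fast solutions meet at a common $p_i$, check that the alternating fast/slow structure of Definition \ref{defn_convergence} holds, and that the list begins and ends at $p$ and $q$ (using that $\eta' = -\mu$ forces the orbit into a slow segment near the critical points $p, q$ of ${\mc F}$, since those are hyperbolic equilibria of the slow equation by Proposition \ref{prop:sloweq}). The main obstacle is the loss of normal hyperbolicity at ${\mc C}_{\mc F}^{sing}$: one must show that the orbit can pass near such a point only in the "cusp" manner, spending a controlled amount of parameter-time there, and that in the limit this contributes either a cusp fast solution or a regular slow solution through ${\mc C}_{\mc F}^{sing}$ — and never a spurious trivial slow solution there — which is exactly where the extended exchange lemma of \cite{Schecter_II} is needed.
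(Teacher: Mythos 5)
Your overall strategy---extract fast pieces by an energy--quantization bubbling argument, and show that on the complementary long intervals the orbit shadows the slow flow---is indeed the skeleton of the paper's proof. But there are two genuine gaps, one of which concerns the central technical difficulty of this theorem.

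First, your bubbling threshold ``energy at least $\delta/2$'' does not control the $\epsilon$-short orbits near points of ${\mc C}_{\mc F}^{sing}$. Lemma \ref{lemma_smallenergy} gives a lower bound $\delta$ on the energy only of fast orbits that are \emph{not} $\epsilon$-short; an $\epsilon$-short orbit from $\gamma_p(-s)$ to $\gamma_p(s)$ has energy tending to $0$ as $s\to 0$. So after extracting the finitely many ``big'' bubbles, you still must show that only finitely many $\epsilon$-short orbits occur in the limit, and your plan provides no mechanism for this---a naive induction on energy simply never terminates here. The paper handles it with a separate, fairly delicate argument: it uses the limit set $L_\infty(\nu',I_{\nu'})$, its compactness and connectedness (Lemma \ref{lemma36}), the ${\mc F}$-monotonicity along it (Lemma \ref{lemma47}), and the local structure of ${\mc C}_{\mc F}$ near ${\mc C}_{\mc F}^{sing}$ in the canonical parametrization, to conclude that at most $N'(N+1)$ short orbits can appear, where $N'=\#{\mc C}_{\mc F}^{sing}$. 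You would need something of comparable force in place of the vague ``spending a controlled amount of parameter-time there.''

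Second, you invoke the exchange lemma (and the extension from \cite{Schecter_II}) to handle the passage through ${\mc C}_{\mc F}^{sing}$ in the compactness proof. This is not how the paper proceeds: the compactness theorem is proved entirely by soft arguments (energy, the limit set, connectedness, and the geometry of the one-dimensional slow manifold), and the exchange lemma enters only in the \emph{gluing} theorem, Theorem \ref{thm_glue}. Using the exchange lemma here is not just unnecessary---it also does not by itself give what you need. The exchange lemma tracks a transported manifold $W^u(p,\lambda)$ as it passes a slow manifold; it is an existence/uniqueness tool for small $\lambda>0$, not a compactness tool that tells you what a given sequence of solutions with $\lambda_\nu\to 0$ converges to.

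A smaller point: on the slow intervals $I_{j,\nu}$ the limit set $L_\infty(\nu,I_{j,\nu})$ must be pinned down to a \emph{single} slow orbit. Since ${\mc C}_{\mc F}$ could contain a circle with the two endpoints $y_{i,+}, y_{i+1,-}$ as extrema of ${\mc F}$, a priori there are two candidate slow orbits; the paper rules out the set being the whole circle via monotonicity, and then passes to a further subsequence to select one of the two arcs. Your sketch states Hausdorff convergence to ``a slow solution'' but does not address this selection.
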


A key point in proving this theorem is to use the energy control to bound the number of pieces of fast orbits appearing in the limit. This is similar to the proof of the Gromov compactness theorem for $J$-holomorphic curves (see, for example, \cite{McDuff_Salamon_2004}) where there is a lower bound of energy for any nontrivial bubble. After finding all the fast orbits in the limit, each adjacent pair of them is connected by a slow orbit. Before the first fast orbit and after the last fast orbit, one may or may not need to add nontrivial slow ones, depending on the types of $p$ and $q$ (i.e., repeller or attracter of the slow flow).

From now on the sequence $\wt{p}_\nu$ is given and we are free to take subsequences as many times as we want.

\subsubsection{The limit set}

For any subsequence $\nu'$ of $\nu$ and sequence of intervals $I_{\nu'}\subset {\mb R}$, set
\begin{align}
L_\infty(\nu', I_{\nu'})= \bigcap_{k\geq 1} \ov{\bigcup_{\nu'\geq k} \wt{p}_{\nu'} (I_{\nu'})}\subset M\times {\mb R}.
\end{align}

\begin{lem}\label{lemma36}
The following are true.
\begin{enumerate}
\item[(I)] $(x, \eta) \in L_\infty( \nu', I_{\nu'})$ if and only if there exists a subsequence $\nu''$ of $\nu'$ and a sequence of numbers $t_{\nu''}\in I_{\nu''}$, such that $\lim_{\nu''\to \infty} \wt{p}_{\nu''}(t_{\nu''}) = (x, \eta)$.

\item[(II)] $L_\infty(\nu', I_{\nu'})$ is compact.

\item[(III)] If there exist $s_{\nu'}\in I_{\nu'}$ such that $\lim_{\nu' \to \infty} \wt{p}_{\nu'}(s_{\nu'})$ exists, then $L_\infty(\nu', I_{\nu'})$ is connected.
\end{enumerate}
\end{lem}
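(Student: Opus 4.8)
\textbf{Proof plan for Lemma \ref{lemma36}.}

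The plan is to treat the three assertions in turn, using only elementary point-set topology together with the compactness of $M$ and the {\it a priori} bound on $\wt\eta_\nu$ provided by Lemma \ref{lemma23}. Recall that all of the $\wt p_\nu$ are orbits in $\wt{\mc M}^{\lambda_\nu}(p,q)$ with $\lambda_\nu\to 0^+$, so the energies ${\mc F}(p)-{\mc F}(q)$ are uniformly bounded; combined with Lemma \ref{lemma23} this confines the images $\wt p_\nu(\mathbb R)$ to a fixed compact set $\mc K = (M\times[-K,K])\times\{0\}$ (after shrinking $\lambda_\nu\le L$), which is what makes the limit set nonempty and compact.

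For (I), I would simply unwind the definition $L_\infty(\nu',I_{\nu'})=\bigcap_{k\ge1}\ov{\bigcup_{\nu'\ge k}\wt p_{\nu'}(I_{\nu'})}$. If $(x,\eta)$ lies in this intersection then for every $k$ it is a limit of points $\wt p_{\nu'_k}(s_k)$ with $\nu'_k\ge k$ and $s_k\in I_{\nu'_k}$; a standard diagonal argument extracts a subsequence $\nu''$ and times $t_{\nu''}\in I_{\nu''}$ with $\wt p_{\nu''}(t_{\nu''})\to(x,\eta)$. Conversely, if such a subsequence and such times exist, then for each fixed $k$ all sufficiently large terms $\wt p_{\nu''}(t_{\nu''})$ lie in $\bigcup_{\nu'\ge k}\wt p_{\nu'}(I_{\nu'})$, hence $(x,\eta)$ lies in its closure, so $(x,\eta)\in L_\infty(\nu',I_{\nu'})$. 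For (II), $L_\infty(\nu',I_{\nu'})$ is by construction an intersection of closed sets, hence closed, and it is contained in the compact set $\mc K$, hence compact. (Here I would remark that it could be empty, but emptiness is a degenerate case of compactness and causes no problem; in the applications $I_{\nu'}$ will always be chosen so that it is nonempty.)

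For (III), suppose $\wt p_{\nu'}(s_{\nu'})\to a$ with $s_{\nu'}\in I_{\nu'}$, so $a\in L_\infty(\nu',I_{\nu'})$ and the set is nonempty. Suppose for contradiction that $L_\infty(\nu',I_{\nu'})=A\sqcup B$ with $A,B$ nonempty, closed, and disjoint; since both sit inside the compact metric space $\mc K$ they have positive distance $3\delta>0$ apart, and we may assume $a\in A$. Pick $b\in B$; by (I) there is a subsequence $\nu''$ and $r_{\nu''}\in I_{\nu''}$ with $\wt p_{\nu''}(r_{\nu''})\to b$. Along this subsequence, for large $\nu''$ the point $\wt p_{\nu''}(s_{\nu''})$ is within $\delta$ of $A$ and $\wt p_{\nu''}(r_{\nu''})$ is within $\delta$ of $B$, hence at distance $>\delta$ from the set $A_\delta$ of points within $\delta$ of $A$; since $\wt p_{\nu''}$ is a continuous curve on the interval $I_{\nu''}$ joining these two points, by the intermediate value property (applied to the continuous function $t\mapsto d(\wt p_{\nu''}(t),A)$) there is $u_{\nu''}$ between $s_{\nu''}$ and $r_{\nu''}$, hence in $I_{\nu''}$, with $d(\wt p_{\nu''}(u_{\nu''}),A)=\delta$. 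The sequence $\wt p_{\nu''}(u_{\nu''})$ lies in the compact set $\mc K$, so after passing to a further subsequence it converges to some point $c$; by (I), $c\in L_\infty(\nu',I_{\nu'})=A\sqcup B$, but $d(c,A)=\delta$ forces $c\notin A$ (as $\delta>0$) and $d(c,A)=\delta<3\delta\le d(B,A)$ forces $c\notin B$, a contradiction.

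The only mild subtlety, and the step I would be most careful about, is the uniform confinement of the images $\wt p_\nu(\mathbb R)$ to a single compact set $\mc K$: this is where Lemma \ref{lemma23} (applied with $L$ an upper bound for the $\lambda_\nu$) is essential, since without it the curves could escape to $\eta=\pm\infty$ and the ``limit set'' machinery — compactness in (II), extraction of convergent subsequences in (I), and the connectedness argument in (III) — would all break down. Everything else is a routine diagonal extraction plus the standard connectedness-via-intermediate-value argument.
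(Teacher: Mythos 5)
Your proposal is correct and follows essentially the same route as the paper: (I) and (II) by unwinding the definition and invoking Lemma \ref{lemma23} for the uniform compact confinement, and (III) by the intermediate-value/connectedness argument on the continuous curves $\wt p_{\nu''}$. The only cosmetic difference is in (III), where you select a point at fixed distance $\delta$ from $A$ while the paper selects the point equidistant from the two pieces; both yield a limit point excluded from $A\sqcup B$, so the arguments are interchangeable.
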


\begin{proof} The first statement and the closedness are by definition, and compactness follows from Lemma \ref{lemma23}. It remains to show the connectedness. If $L_\infty(\nu', I_{\nu'})$ is not connected, then there are two nonempty closed subset $L_1, L_2\subset M\times i{\mb R}$ such that $L_\infty(\nu', I_{\nu'})= L_1\cup L_2$, and $L_1, L_2$ has a nonzero distance. Suppose $\lim_{\nu'\to \infty} \wt{p}_{\nu'} (s_{\nu'})\in L_1$.  Then there exists a subsequence (still indexed by $\nu'$) and $w_{\nu'}\in I_{\nu'}$ such that $d( \wt{p}_{\nu'}(w_{\nu'}), L_2) \to 0$. Then there exists $w_{\nu'}' \in [s_{\nu'}, w_{\nu'}]$ such that $d(\wt{p}_{\nu'}(w_{\nu'}'), L_1)= d(\wt{p}_{\nu'}(w_{\nu'}'), L_2)>0$ for the same subsequence. Then there is a subsequence of $\wt{p}_{\nu'}(w_{\nu'}')$ that converges to a point not in $L_1$ and $L_2$, which contradicts $L_\infty(\nu', I_{\nu'})= L_1\cup L_2$.
\end{proof}

The following lemma is obvious.
\begin{lem}\label{lemma47}
Suppose we have two sequences of numbers, $s_\nu< t_\nu$ with \newline $\lim_{\nu \to \infty} \wt{p}_\nu(s_\nu) =a$, $\lim_{\nu \to \infty} \wt{p}_\nu( t_\nu) = b$. Then ${\mc F}( a) \geq {\mc F}(b)$, and, for any $z\in L_\infty( \nu, [s_\nu, t_\nu])$, ${\mc F}(z) \in [ {\mc F}(a), {\mc F}(b) ]$.
\end{lem}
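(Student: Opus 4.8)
The plan is to exploit the fact that ${\mc F}$ is a Lyapunov function for the negative gradient flow \eqref{eqn15a}--\eqref{eqn15b}. Since the right-hand side of \eqref{eqn15a}--\eqref{eqn15b} is exactly $-\nabla {\mc F}$ computed with respect to the metric $g_{\lambda_\nu}$, along any solution $\wt{p}_\nu$ one has $\frac{d}{dt}{\mc F}(\wt{p}_\nu(t)) = -\left| \nabla {\mc F}(\wt{p}_\nu(t)) \right|_{g_{\lambda_\nu}}^2 \le 0$, so $t \mapsto {\mc F}(\wt{p}_\nu(t))$ is non-increasing for every $\nu$. This monotonicity, together with continuity of ${\mc F}$ and the sequential description of $L_\infty$ from Lemma \ref{lemma36}(I), is the only input needed.

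First I would prove ${\mc F}(a) \ge {\mc F}(b)$. Since $s_\nu < t_\nu$, monotonicity gives ${\mc F}(\wt{p}_\nu(s_\nu)) \ge {\mc F}(\wt{p}_\nu(t_\nu))$ for all $\nu$; letting $\nu \to \infty$ and using $\wt{p}_\nu(s_\nu) \to a$, $\wt{p}_\nu(t_\nu) \to b$, and continuity of ${\mc F}$, the inequality passes to the limit.

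Next, for $z \in L_\infty(\nu, [s_\nu, t_\nu])$, I would invoke Lemma \ref{lemma36}(I) to extract a subsequence $\nu'$ and numbers $r_{\nu'} \in [s_{\nu'}, t_{\nu'}]$ with $\wt{p}_{\nu'}(r_{\nu'}) \to z$. Because $s_{\nu'} \le r_{\nu'} \le t_{\nu'}$, monotonicity along $\wt{p}_{\nu'}$ gives ${\mc F}(\wt{p}_{\nu'}(s_{\nu'})) \ge {\mc F}(\wt{p}_{\nu'}(r_{\nu'})) \ge {\mc F}(\wt{p}_{\nu'}(t_{\nu'}))$, and passing to the limit along $\nu'$ yields ${\mc F}(a) \ge {\mc F}(z) \ge {\mc F}(b)$; that is, ${\mc F}(z)$ lies in the closed interval with endpoints ${\mc F}(b)$ and ${\mc F}(a)$ (which is what the stated conclusion means, in view of the first part).

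There is no real obstacle: the statement is essentially the decrease of energy along trajectories combined with the sequential characterization of the limit set. The only points to handle with mild care are that the interval in the conclusion should be read as the closed interval between ${\mc F}(b)$ and ${\mc F}(a)$, and that the subsequence produced by Lemma \ref{lemma36}(I) is a subsequence of the already-fixed sequence $\wt{p}_\nu$, so no diagonal argument or additional compactness beyond what already entered Lemma \ref{lemma36} (via Lemma \ref{lemma23}) is required.
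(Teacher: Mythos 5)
Your proof is correct, and it is exactly the intended argument: the paper states this lemma as "obvious" and omits a proof, precisely because it reduces to the monotonicity of ${\mc F}$ along negative gradient trajectories together with the sequential characterization of $L_\infty$ from Lemma \ref{lemma36}(I). Your remark that the interval $[{\mc F}(a), {\mc F}(b)]$ should be read as the closed interval between ${\mc F}(b)$ and ${\mc F}(a)$ is a fair observation about the paper's notation.
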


\subsubsection{Identify all fast orbits}

By the compactness of $L_\infty(\nu, {\mb R})$ and the fact that $\lambda_\nu \to 0$, for any sequence $t_\nu\in {\mb R}$, there exists a subsequence such that on any finite interval, $\wt\eta_\nu(\cdot+ t_\nu)$ converges to a constant function $\eta_1$.  Then, by the usual argument of Morse theory, there is a subsequence of $\wt{p}_\nu( \cdot + t_\nu)$ converging to a fast solution $\wt{y}:=(y_1, \eta_1): {\mb R}\to M\times {\mb R}$ with ${\rm Im} \, \wt{y}_1 \subset M \times \{ \eta_1\}$, in $C^\infty_{loc}$-topology. So $L_\infty( \nu, {\mb R})$ is the union of fast orbits (which could contain constant orbits). We first prove that for a suitable subsequence it only contains finitely many nonconstant ones.

\begin{prop} There exists a subsequence (still indexed by $\nu$ without loss of generality), and $t_{1, \nu}, \ldots, t_{n, \nu}\in {\mb R}$ ($n$ could be zero), satisfying
\begin{enumerate}
\item[(I)] $t_{1, \nu}< t_{2, \nu} <\cdots< t_{n, \nu}$, $\lim_{\nu\to \infty} | t_{i, \nu}- t_{j, \nu}| =\infty$ for all $i\neq j$;

\item[(II)] $\wt{p}_\nu( \cdot+ t_{i, \nu})$ converges to a (nonconstant) fast solution $\wt{y}_i:= (y_i, \eta_i): {\mb R}\to M \times {\mb R}$ in the $C^\infty_{loc}$-topology;

\item[(III)] for any sequence $s_\nu\in {\mb R}$ such that $\lim_{\nu\to \infty} |s_\nu- t_{i, \nu}| = \infty$ for all $i$, any convergent subsequence of $\wt{p}_\nu(s_\nu)$ has limit in ${\mc C}_{\mc F}$.
\end{enumerate}
\end{prop}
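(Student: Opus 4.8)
The approach is energy quantization, in the spirit of the Gromov compactness theorem: each nonconstant fast orbit that appears in the limit carries a definite amount of energy, while the total energy available is the fixed number $E:={\mc F}(p)-{\mc F}(q)$, so only finitely many such orbits can occur. Recall the energy identity $E=\left\|\wt{x}_\nu'\right\|_{L^2}^2+\lambda_\nu^2\left\|\mu(\wt{x}_\nu)\right\|_{L^2}^2$, valid for every $\nu$, and note that a nonconstant fast solution $\wt{y}=(y,\eta_0)$ from $p_-=(x_-,\eta_0)$ to $p_+=(x_+,\eta_0)$ has energy $E(\wt{y})=\int_{\mb R}\|y'\|^2\,dt=f_{\eta_0}(x_-)-f_{\eta_0}(x_+)={\mc F}(p_-)-{\mc F}(p_+)>0$. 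The paragraph preceding the statement supplies the basic compactness input: for any sequence $t_\nu\in{\mb R}$, after passing to a subsequence $\wt{p}_\nu(\cdot+t_\nu)$ converges in $C^\infty_{loc}$ to a complete fast solution $\wt{y}$, and $\wt{y}$ is constant if and only if $\wt{y}(0)\in{\mc C}_{\mc F}$ (a complete orbit of the fast flow through an equilibrium is constant).

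The first and main step is to produce a uniform lower bound: there is $\delta>0$ such that any nonconstant fast solution arising as such a $C^\infty_{loc}$-limit along the given sequence $\wt{p}_\nu$ has energy at least $\delta$. Fix $\epsilon>0$ small. By Lemma \ref{lemma_smallenergy}, any nonconstant fast solution that is not an $\epsilon$-short orbit has energy at least $\delta(\epsilon)>0$, so it remains to bound the number of $\epsilon$-short orbits that can appear. Here one uses that ${\mc C}_{\mc F}^{sing}$ is finite, the center-manifold normal form \eqref{cmdex}--\eqref{cmdeeta} near a fold $p_0\in{\mc C}_{\mc F}^{sing}$, and the fact that ${\mc F}$ is strictly decreasing along every $\wt{p}_\nu$: an $\epsilon$-short orbit near $p_0$ passes from the higher-index branch of ${\mc C}_{\mc F}$ at the level $\eta_{p_0}+cs^2$ to the lower-index branch at the same level, strictly lowering ${\mc F}$, and a trajectory of \eqref{eqn15a}--\eqref{eqn15b} with $\lambda_\nu$ small that has left a neighborhood of $p_0$ along the lower branch cannot return to that fold; hence only boundedly many $\epsilon$-short orbits can occur. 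Making this last assertion rigorous — that a trajectory of the perturbed system cannot repeatedly enter a neighborhood of a fold — is the technical heart of the argument and the step I expect to be the main obstacle; it rests on the normal form \eqref{cmdex}--\eqref{cmdeeta} together with the monotonicity of ${\mc F}$.

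Granting the bound $\delta$, I would extract the fast orbits inductively. If $L_\infty(\nu,{\mb R})$ contains a point not in ${\mc C}_{\mc F}$, choose $t_{1,\nu}$ with $\wt{p}_\nu(t_{1,\nu})$ converging, along a subsequence, to such a point; then $\wt{p}_\nu(\cdot+t_{1,\nu})$ converges in $C^\infty_{loc}$ to a nonconstant fast solution $\wt{y}_1$. Inductively, having chosen $t_{1,\nu}<\cdots<t_{k,\nu}$ with $|t_{i,\nu}-t_{j,\nu}|\to\infty$ for $i\neq j$ and $\wt{p}_\nu(\cdot+t_{i,\nu})\to\wt{y}_i$ nonconstant, if some sequence $s_\nu$ satisfies $|s_\nu-t_{i,\nu}|\to\infty$ for all $i$ and has a convergent subsequence of $\wt{p}_\nu(s_\nu)$ whose limit is not in ${\mc C}_{\mc F}$, adjoin $s_\nu$ to the list (reindexing so the times increase): the new list is still mutually separated and yields a new nonconstant limit; otherwise stop. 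Since the $t_{i,\nu}$ are mutually separated, for large $\nu$ the intervals $[t_{i,\nu}-R,\,t_{i,\nu}+R]$ are pairwise disjoint, so $\sum_{i=1}^k\int_{t_{i,\nu}-R}^{t_{i,\nu}+R}\|\wt{x}_\nu'(t)\|^2\,dt\le\|\wt{x}_\nu'\|_{L^2}^2\le E$; letting $\nu\to\infty$ and then $R\to\infty$ gives $\sum_{i=1}^k E(\wt{y}_i)\le E$, hence $k\le E/\delta$. Therefore the process terminates at some finite $n$, and a single subsequence realizing all of $\wt{y}_1,\dots,\wt{y}_n$ is obtained by a diagonal argument over the finitely many extraction steps.

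For this subsequence, (I) and (II) hold by construction. Property (III) is exactly the termination condition: if there were a sequence $s_\nu$ with $|s_\nu-t_{i,\nu}|\to\infty$ for all $i$ and a convergent subsequence of $\wt{p}_\nu(s_\nu)$ with limit off ${\mc C}_{\mc F}$, the induction would not have stopped. Since a $C^\infty_{loc}$-limit fast solution is constant precisely when its value at $0$ lies in ${\mc C}_{\mc F}$, every convergent subsequence of such a $\wt{p}_\nu(s_\nu)$ has limit in ${\mc C}_{\mc F}$, which is (III) and completes the argument.
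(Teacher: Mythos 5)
Your proposal gets the high-level structure right: energy quantization handles the fast orbits with energy $\ge\delta(\epsilon)$, and the $\epsilon$-short orbits require a separate count. The gap is precisely where you flag it — the bound on the number of $\epsilon$-short orbits — and your proposed mechanism is not what the paper uses and is not established.

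Two specific problems. First, an internal inconsistency: you open Step 1 by claiming a uniform lower bound $\delta>0$ on the energy of \emph{every} nonconstant fast limit solution, and you later invoke this via $\sum_i E(\wt{y}_i)\le E$ to get $k\le E/\delta$. But $\epsilon$-short orbits near a fold can have arbitrarily small energy (they shrink as $s\to0$ in the canonical parametrization $\gamma_p$), so no such uniform $\delta$ exists. Bounding their \emph{number} is not the same as bounding their individual energies, and the paper does not claim it is. Consequently the final inequality $k\le E/\delta$ is unjustified; the count of short orbits must enter as an additive term, not be absorbed into an energy estimate.

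Second, your proposed mechanism — that the perturbed trajectory $\wt{p}_\nu$, having left a fold neighborhood on the lower-index branch, "cannot return to that fold" — is a statement about the $\lambda_\nu>0$ dynamics that would need its own nontrivial proof, and it is not the route the paper takes. The paper instead argues at the level of the limit set $L_\infty(\nu,I_\nu)$. Fixing a fold $w$ and supposing there are $N+2$ short limit orbits at $w$ (with $N$ the number of non-short fast orbits), pigeonhole gives two consecutive short orbits at $w$ with no non-short orbit appearing between them. The limit set $T=L_\infty(\nu,[t_{j,\nu},t_{j+1,\nu}])$ is then contained in ${\mc C}_{\mc F}$ together with the $r$-balls around the \emph{other} folds, is connected (Lemma \ref{lemma36}(III)), and its ${\mc F}$-values lie in $[{\mc F}(b_-),{\mc F}(a_+)]$ (Lemma \ref{lemma47}). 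Since $\mu(x_w)\neq0$ by (A9), ${\mc F}|_{{\mc C}_{\mc F}}$ has a strict extremum at $w$, so ${\mc F}(w)\notin[{\mc F}(b_-),{\mc F}(a_+)]$ and hence $w\notin T$; for the same reason one branch of ${\mc C}_{\mc F}\cap(B_{2r}(w)\setminus B_r(w))$ has ${\mc F}$-values outside the allowed range and is disjoint from $T$. This makes it impossible for the connected set $T$ to join the two endpoints $a_+$ and $b_-$, which sit on opposite branches near $w$. That contradiction gives the bound $N'(N+1)$ on the total count of short orbits, with $N'=\#{\mc C}_{\mc F}^{sing}$. In short, the missing step is a connectedness-plus-monotonicity argument about the limit set, not a recurrence statement about individual trajectories, and you would need to supply it for the proof to close.
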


\begin{proof} If $L_\infty( \nu, {\mb R}) \subset {\mc C}_{\mc F}$, then nothing has to be proved. Suppose it is not the case. Then we take $\delta, \epsilon>0$ as in Lemma \ref{lemma_smallenergy} and small enough, and $r>0$ small enough, such that the following are true.
\begin{itemize}
\item All $\epsilon$-short orbits are contained in $\cup_{ w \in {\mc C}_{\mc F}^{sing}} B_r(w)$.

\item All nontrivial fast orbits which are not $\epsilon$-short have energy at least $\delta$.

\item For each $w\in {\mc C}_{\mc F}^{sing}$, ${\mc C}_{\mc F}\cap B_{2r}(w)$ can be parametrized using the canonical parametrization as in (\ref{eqn510}), so that ${\mc F}$ restricted to both components of ${\mc C}_{\mc F}\cap B_{2r}(w) \setminus \{w\}$ is monotonic.
\end{itemize}

Let $N= \lfloor {E\over \delta} \rfloor$, where $E= {\mc F}(p_-)- {\mc F}(p_+)$ is the total energy. It is easy to prove by induction that there exists $k\leq N$ (possibly zero), a subsequence $\nu'$ of the original sequence, a sequence of numbers $ \left\{ t_{i,\nu'}\right\}_{i=1, \ldots, k}$, and fast solutions $\wt{y}_i:=( y_i, \eta_i)$ satisfying
\begin{enumerate}
\item $\lim_{\nu' \to \infty} t_{i+1, \nu'}- t_{i, \nu'}=+\infty$, for $i=1, \ldots, k-1$;

\item $\lim_{\nu' \to\infty} \wt{p}_{\nu'}(t_{i, \nu'} + \cdot) = (y_i, \eta_i)$ in $C^\infty_{loc}$-topology;

\item each $\wt{y}_i$ has energy $\geq \delta$;

\item any other nonconstant fast orbit contained in $L_\infty(\nu', {\mb R})$ is $\epsilon$-short.
\end{enumerate}

We replace $\nu'$ by $\nu$ for simplicity. Let $N'=\# {\mc C}_{\mc F}^{sing}$. We can continue the induction to find all $\epsilon$-short orbits, and we claim that the induction stops at finite time and we can find at most $N'(N+1)$ $\epsilon$-short orbits. Suppose not, then there exists a $w \in {\mc C}_{\mc F}^{sing}$ and a subsequence, still indexed by $\nu$, and sequence of numbers
\begin{align}
s_{1,\nu}< s_{2, \nu}<\cdots< s_{N+2, \nu} 
\end{align}
such that for each $j$, $\wt{p}_\nu(s_{j, \nu}+ \cdot)$ converges in $C^\infty_{loc}$ to an $\epsilon$-short solution $\wt{z}_j := ( z_j, \xi_j)$ whose orbit is contained in $B_r( w )$. Moreover, there exists $j\in \{1, \ldots, N+1\}$ such that $L_\infty( \nu, [s_{j, \nu}, s_{j+1, \nu}])$ contains no orbits that are not $\epsilon$-short. Then we take $[t_{j, \nu}, t_{j+1, \nu} ] \subset [s_{j, \nu}, s_{j+1 ,\nu}]$ such that
\begin{multline*}
\lim_{\nu\to \infty} \wt{p}_\nu ( t_{j, \nu}) = \lim_{t\to + \infty} \wt{z}_j(t) := a_+ \in {\mc C}_{\mc F},\\ \lim_{\nu\to \infty} \wt{p}_\nu ( t_{j+1, \nu}) = \lim_{t\to -\infty} \wt{z}_{j+1}(t) := b_- \in  {\mc C}_{\mc F}
\end{multline*}
and denote $\lim_{t\to -\infty} \wt{z}_j(t) = a_-$, $\lim_{t\to +\infty} \wt{z}_{j+1}(t)= b_+$. Suppose \newline $L_\infty( \nu, [t_{j, \nu}, t_{j+1, \nu}])$ contains some other $\epsilon$-short orbit contained in $B_r(w)$.  It can contain only finitely many, because those orbits must start from the arc between $a_-$ and $b_-$ and ends at the arc between $a_+$ and $b_+$; their energy therefore has a nonzero lower bound. Hence, by taking a subsequence if necessary, and chooing subintervals of $[t_{j, \nu}, t_{j+1, \nu}]$, we may assume that $T= L_\infty( \nu, [t_{j, \nu}, t_{j+1, \nu}])$ contains no other $\epsilon$-short orbit that is also contained in $B_r( w )$. This implies that
$$T \subset {\mc C}_{\mc F} \cup \bigcup_{v \in {\mc C}_{\mc F}^{sing},\ v\neq w} B_r(v).$$
Indeed, $T$ is contained in a connected component of the latter set.

Now by the local behavior of ${\mc F}|_{{\mc C}_{\mc F}}$ and the monotonicity requirement in Lemma \ref{lemma47}, we see that $w \notin T$. And for the same reason, one component of ${\mc C}_{\mc F}\cap (B_{2r}(w)\setminus B_r(w))$ is disjoint from $T$. This contradicts  the connectedness of $T$.
\end{proof}

\subsubsection{Identify all slow orbits}

One can not go from $p$ to $q$ only through fast orbits because $\eta_p \neq \eta_q$ according to our assumption (A12). Hence in the limit slow orbits appear. The identification of them is easy intuitively, because ${\mc C}_{\mc F}$ is 1-dimensional and our limit object should be connected.

\begin{prop} Assume the sequence $\wt{p}_\nu$ and $\{t_{i, \nu} \}_{i=1, \ldots, n}$ satisfy the conditions of the last proposition. Let $t_{0, \nu}= -\infty $ and $t_{n+1, \nu}=+\infty$. Then there exists a subsequence (still indexed by $\nu$), and slow orbits $\gamma_i \subset {\mc C}_{\mc F}$,
$i=0, \ldots, n$, satisfying the following.
\begin{enumerate}
\item[(I)] For each $i=1, \ldots, n$, $\gamma_i$ is from $\wt{y}_i(+\infty)$ to $\wt{y}_{i+1}(-\infty)$.

\item[(II)] If $p\in {\rm Crit}^+({\mc F})$, then $\gamma_0$ is the single point $p$; if $q\in {\rm Crit}^-({\mc F})$, then $\gamma_n$ is the single point $q$.

\item[(III)] For each $i=1, \ldots, n-1$, there exist a sequence of intervals $(a_{i, \nu}, b_{i, \nu}) \subset (t_{i-1, \nu}, t_{i, \nu})$ such that $\lim_{\nu\to \infty} |a_{i, \nu}- t_{i-1, \nu}| = \lim_{\nu \to \infty} | b_{i, \nu}- a_{i, \nu}| = \lim_{\nu \to \infty} | t_{i, \nu} - b_{i, \nu}| = \infty$ and $\wt{p}_\nu( [a_{i, \nu}, b_{i, ,\nu}])$ converges to $\gamma_i$ in Hausdorff topology.
\end{enumerate}
\end{prop}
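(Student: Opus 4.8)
The plan is to fill in, in each ``gap'' between two consecutive fast concentrations, the slow piece of the limit of $\wt{p}_\nu$. Adopt the convention $\wt{y}_0\equiv p$, $\wt{y}_{n+1}\equiv q$, with $t_{0,\nu}=-\infty$ and $t_{n+1,\nu}=+\infty$. Each nonconstant fast solution $\wt{y}_i$ has finite energy $\le E={\mc F}(p)-{\mc F}(q)$, so by the Palais--Smale property (compactness of $M$) it converges as $t\to\pm\infty$ to points $\wt{y}_i(\pm\infty)\in{\mc C}_{\mc F}$. For each $j=0,\ldots,n$ I would produce $\gamma_j$, running from $\wt{y}_j(+\infty)$ to $\wt{y}_{j+1}(-\infty)$, as follows. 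First, using the $C^\infty_{loc}$-convergence $\wt{p}_\nu(\cdot+t_{j,\nu})\to\wt{y}_j$ together with $\wt{y}_j(t)\to\wt{y}_j(+\infty)$ as $t\to+\infty$, a diagonal argument yields $a_{j,\nu}$ with $a_{j,\nu}-t_{j,\nu}\to+\infty$, $a_{j,\nu}-t_{j,\nu}=o(t_{j+1,\nu}-t_{j,\nu})$, and $\wt{p}_\nu(a_{j,\nu})\to\wt{y}_j(+\infty)$; symmetrically one gets $b_{j,\nu}$ near $t_{j+1,\nu}$ with $\wt{p}_\nu(b_{j,\nu})\to\wt{y}_{j+1}(-\infty)$ (for $j=0$ and $j=n$ the missing endpoint is read off from the boundary conditions $\wt{p}_\nu(t)\to p$, $\to q$, together with Step 3 below). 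By construction every $s\in[a_{j,\nu},b_{j,\nu}]$ stays uniformly far from every $t_{i,\nu}$, so property (III) of the preceding proposition forces $d(\wt{p}_\nu(s),{\mc C}_{\mc F})\to0$ uniformly on $[a_{j,\nu},b_{j,\nu}]$ and $L_\infty(\nu,[a_{j,\nu},b_{j,\nu}])\subset{\mc C}_{\mc F}$. By Lemma \ref{lemma36} this limit set, call it $K_j$, is compact and connected and contains $\wt{y}_j(+\infty)$ and $\wt{y}_{j+1}(-\infty)$; I would set $\gamma_j:=K_j$.

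The analytic core is to recognize $K_j$ as a slow orbit. On $[a_{j,\nu},b_{j,\nu}]$, passing to the slow time $\tau=\lambda_\nu^2 t$, equation (\ref{eqn15b}) reads $d\wt\eta_\nu/d\tau=-\mu(\wt{x}_\nu)$; wherever $\wt{p}_\nu$ is near ${\mc C}_{\mc F}\setminus{\mc C}_{\mc F}^{sing}$ one parameterizes ${\mc C}_{\mc F}$ locally as a graph $x=x(\eta)$, so $\wt{x}_\nu=x(\wt\eta_\nu)+o(1)$ and this converges to the slow equation (\ref{slowde}). Hence $K_j\cap({\mc C}_{\mc F}\setminus{\mc C}_{\mc F}^{sing})$ is a union of integral curves of (\ref{slowde}). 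Since $\tfrac{d}{dt}{\mc F}=-\mu(x)^2$ along (\ref{slowde}) --- strictly negative off ${\rm Crit}({\mc F})$ --- and since Lemma \ref{lemma47} confines $K_j$ to ${\mc F}^{-1}\big([\,{\mc F}(\wt{y}_{j+1}(-\infty)),{\mc F}(\wt{y}_j(+\infty))\,]\big)$, the compact connected set $K_j$ must be a single arc in the $1$-manifold ${\mc C}_{\mc F}$ joining its two endpoints, which the description above identifies with the slow orbit between them. For the Hausdorff convergence in (III): $\wt{p}_\nu([a_{j,\nu},b_{j,\nu}])$ stays within $o(1)$ of ${\mc C}_{\mc F}$ by the above, it cannot pass from one branch of ${\mc C}_{\mc F}$ to another without leaving a neighborhood of ${\mc C}_{\mc F}$ through the fast layer, and monotonicity of ${\mc F}$ along $\wt{p}_\nu$ (with ${\mc F}(\wt{p}_\nu(a_{j,\nu}))\to{\mc F}(\wt{y}_j(+\infty))$, ${\mc F}(\wt{p}_\nu(b_{j,\nu}))\to{\mc F}(\wt{y}_{j+1}(-\infty))$) prevents overshoot past either endpoint of $\gamma_j$; hence $\wt{p}_\nu([a_{j,\nu},b_{j,\nu}])\to\gamma_j$ in the Hausdorff topology.

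It remains to treat the end gaps and justify conclusion (II). At $j=0$: if $p\in{\rm Crit}^-({\mc F})$ then by Proposition \ref{twoindexprop2} the unstable manifold of $p$ for (\ref{eqn15a})--(\ref{eqn15b}) with small $\lambda$ contains the slow direction, so $\wt{p}_\nu$ leaves $p$ moving along ${\mc C}_{\mc F}$ and $\gamma_0$ is the nontrivial slow orbit emanating from $p$; if $p\in{\rm Crit}^+({\mc F})$ the unstable manifold of $p$ is ``purely fast'', so $\wt{p}_\nu$ leaves $p$ through the fast layer, forcing $\wt{y}_1(-\infty)=p$ and $K_0=\{p\}$. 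The case $j=n$ is symmetric. Finally, near $w\in{\mc C}_{\mc F}^{sing}$ one invokes the fold normal form of Proposition \ref{prop:singpar2} and (A9) (i.e.\ $\mu(w)\ne0$): the slow flow reaches $w$ from the incoming branch(es) in finite time and cannot be continued along ${\mc C}_{\mc F}$ past $w$, so $w$ occurs only as an endpoint of some $\gamma_j$ (the adjacent $\wt{y}_j$ then being a cusp orbit), never in its interior --- which is exactly what is needed for $\gamma_j$ to be a single slow orbit.

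I expect the main obstacle to be the matching carried out in the second paragraph near the endpoints $\wt{y}_j(\pm\infty)$ and near ${\mc C}_{\mc F}^{sing}$: one must show that the transition of $\wt{p}_\nu$ between the fast layer and the slow flow on ${\mc C}_{\mc F}$ is clean --- no extra concentration and no lingering --- so that $K_j$ is precisely the slow orbit and nothing more. For the present (compactness) direction this is governed by the normal hyperbolicity of ${\mc C}_{\mc F}\setminus{\mc C}_{\mc F}^{sing}$, the fold normal form, and soft limit-set arguments, whereas the quantitative entry--exit (exchange lemma) estimates are reserved for the gluing theorem. A secondary subtlety, present only when the Morse indices of $p$ and $q$ are unconstrained, is that $K_j$ may contain an interior critical point of ${\mc F}$; then it is a concatenation of slow orbits and one subdivides the interval $[a_{j,\nu},b_{j,\nu}]$ accordingly.
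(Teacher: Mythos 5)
Your strategy is the same as the paper's — produce intervals $[a_{j,\nu},b_{j,\nu}]$ between consecutive fast layers, observe that the limit set is compact, connected, and contained in ${\mc C}_{\mc F}$, and then use ${\mc F}$-monotonicity (Lemma \ref{lemma47}) to pin it down. The slow-time rescaling $\tau=\lambda_\nu^2 t$ to recognize the limit as integral curves of (\ref{slowde}), and the treatment of (II) via the attractor/repeller structure of the endpoints, are both correct. So the approach is sound.

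The gap is in the sentence ``the compact connected set $K_j$ must be a single arc in the $1$-manifold ${\mc C}_{\mc F}$ joining its two endpoints.'' This is precisely what needs a real argument, and the confinement by Lemma \ref{lemma47} does not deliver it. The problematic situation is when the relevant component of ${\mc C}_{\mc F}$ is a circle $C$ on which $\wt{y}_j(+\infty)$ and $\wt{y}_{j+1}(-\infty)$ are the absolute maximum and minimum of ${\mc F}$. Then \emph{both} arcs of $C$ are slow orbits, the entire circle lies in ${\mc F}^{-1}\bigl([{\mc F}(\wt{y}_{j+1}(-\infty)),{\mc F}(\wt{y}_j(+\infty))]\bigr)$, and nothing in your argument excludes $K_j=C$: for instance, $\wt{p}_\nu([a_{j,\nu},b_{j,\nu}])$ might track the left arc for even $\nu$ and the right arc for odd $\nu$ without any single trajectory ever crossing from one arc to the other, so ``cannot pass from one branch to another without a fast excursion'' is not violated and ``prevents overshoot'' is irrelevant. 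The paper's proof is largely devoted to exactly this point: it first shows $L_\infty(\nu,I_{i,\nu})$ is contained in $\Gamma_i$ (the union of the at most two candidate slow orbits), treating separately the cases where $\Gamma_i$ is simply connected or a circle, and then runs a contradiction argument — supposing a point $\widetilde z\in L_\infty$ off $\gamma_i^1$ exists, it finds times $t_\nu^-<t_\nu<t_\nu^+$ whose images converge to points on opposite arcs with interlaced ${\mc F}$-values, so that $L_\infty(\nu'',[t_{\nu''}^-,t_{\nu''}^+])$ would be disconnected, contradicting Lemma \ref{lemma36}(III). That subsequence extraction is the missing ingredient; without it, (III) does not follow. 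A secondary, smaller omission: you state in passing that $K_j$ ``may contain an interior critical point of ${\mc F}$'' and one should subdivide, but Proposition \ref{prop:indexequal}/\ref{prop:singsolprop} (together with (A12)) forbid interior critical points of ${\mc F}$ on a slow piece once the fast solutions have been identified, so this case does not actually arise and should not be left as a loose end.
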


\begin{proof}
By the previous proposition, we can find sequences of intervals $I_{i, \nu}= [ a_{i, \nu}, b_{i, \nu}] \subset {\mb R}$, $i=1, \ldots, n-1$ such that for each $\nu$, $I_{i, \nu}\cap I_{j, \nu}=\emptyset$, for $i\neq j$, $b_{i, \nu}- a_{i,\nu}\to \infty$, and 
$$\lim_{\nu\to \infty} \wt{p}_\nu( a_{i, \nu}) = \wt{y}_i(+\infty),\ \lim_{\nu\to \infty} \wt{p}_\nu(b_{i, \nu}) = \wt{y}_{i+1}(-\infty).$$
Take $I_{0, \nu}= (-\infty, a_{0, \nu}]$, $I_{n, \nu}= [ b_{n, \nu}, +\infty)$ such that 
$$\lim_{\nu \to \infty} \wt{p}_\nu( a_{0, \nu}) = \wt{y}_1(-\infty),\ \lim_{\nu \to \infty} \wt{p}_\nu( b_{n, \nu}) = \wt{y}_n(+\infty).$$

\begin{enumerate}

\item For each $i\in \{1, \ldots, n\}$, set $y_{i,\pm}= \wt{y}_i(\pm\infty)$, and set $y_{0, +}= p$, $y_{n+1, -} = q$. For $i=0, \ldots, n$, let $\Gamma_i \subset {\mc C}_{\mc F}$ be the union of all slow orbits from $y_{i, +}$ to $y_{i+1, -}$. Since ${\mc C}_{\mc F}$ is a 1-dimensional manifold, $\Gamma_i$ is either a single point, or is one nontrivial slow orbit, or is the union of two nontrivial slow orbits. We would like to prove that $L_\infty(\nu, I_{i, \nu}) \subset \Gamma_i$. 

 If $\Gamma_i$ is the union of two slow orbits, then $\Gamma_i$ is a connected component of ${\mc C}_{\mc F}$. By the connectedness of $L_\infty(\nu, I_{i, \nu})$ the claim is true.

Suppose $\Gamma_i$ is simply-connected and $L_\infty(\nu, I_{i, \nu})$ is also simply-connected. Suppose in this case the claim is not true, then there exists a subsequence (still indexed by $\nu$) and $t_\nu\in I_{i, \nu}$ such that $\lim_{\nu \to \infty} \wt{p}_\nu( t_\nu) = z \in L_\infty( \nu, I_{i, \nu}) \setminus \Gamma_i$ and
\begin{align}
 {\mc F}( y_{i, +}) > {\mc F}(z) > {\mc F}( y_{i+1, -}).
\end{align}
But since $L_\infty(\nu, [t_\nu, b_{i, \nu}])$ and $L_\infty( \nu, [a_{i, \nu}, t_\nu])$ are connected, either the former contains $y_{i, +}$ or the latter contains $y_{i+1, -}$, either of which contradicts Lemma \ref{lemma47}.

We claim that it is impossible that $\Gamma_i$ is homeomorphic to a closed interval and $L_\infty( \nu, I_{i, \nu})$ is homeomorphic to a circle $C\subset {\mc C}_{\mc F}$. Suppose not, then Lemma \ref{lemma47} implies that $y_{i, +}$ and $y_{i+1, -}$ must be the absolute maximum and minimum of the function ${\mc F}$ restricted to this component $C$. The critical points of ${\mc F}|_C$ can be put in a cyclic order and, adjacent to $y_{i, +}$, the two local minimum are $y_{i+1, -}$ and some $z\in C \setminus \Gamma_i$. After taking a subsequence, we can find $t_\nu\in I_{i, \nu}$ such that $\lim_{\nu \to \infty} \wt{p}_\nu( t_\nu) = z$. Consider the interval $J_{i, \nu} = [ t_\nu, b_{i, \nu}]$. Then $L_\infty( \nu, J_{i, \nu})$ is homeomorphic to a closed interval and doesn't contain $y_{i, +}$. But this is impossible because then it must contain another local maximum $z'$ between $z$ and $y_{i+1, -}$ to keep the connectedness of $L_\infty( \nu, J_{i, \nu})$, while ${\mc F}(z')\notin [{\mc F}(z), {\mc F}(y_{i+1, 0}) ]$, which contradicts with Lemma \ref{lemma47}.

\item We can identify one slow orbit contained in $\Gamma_i$. More precisely, we would like to show that, there exists $\gamma_i\subset \Gamma_i$ a slow orbit from $y_{i, +}$ to $y_{i+1, -}$ and a subsequence $\nu'$ such that $L_\infty( \nu', I_{i, \nu'})= \gamma_i$. Assume it is impossible, then $\Gamma_i= \gamma_i^1\cup \gamma_i^2$, the union of two different (nonconstant) slow orbits, and we can assume that there exists a subsequence $\nu'$ such that for {\it any} further subsequence $\{\nu''\}\subset \{\nu'\}$, $\gamma_i^1 \subset L_\infty( \nu'', I_{i, \nu''})$. 

Now suppose there exists $\widetilde{z} \in L_\infty( \nu', I_{i, \nu'} ) \cap \left( \Gamma_i\setminus \gamma_i^1 \right)$. Then there exists a subsequence $\nu''$, and $t_{\nu''}^-, t_{\nu''}, t_{\nu''}^+\in I_{i, \nu''}$, $t_{\nu''}^-< t_{\nu''} < t_{\nu''}^+$, such that
$$\lim_{\nu''\to \infty} \wt{p}_{\nu''}(t_{\nu''}) = \widetilde{z}, \ \lim_{\nu''\to \infty} \wt{p}_{\nu''} (t_{\nu''}^\pm)= \widetilde{z}^{\pm} \in {\rm Int}(\gamma_i^1)$$
and ${\mc F}(y_{i, +})> {\mc F}( \widetilde{z}^{-} )> {\mc F}( \widetilde{z} ) > {\mc F}( \widetilde{z}^+ )> {\mc F} ( y_{i+1, -}) $.

Since 
$$\left( \Gamma_i \setminus \gamma_i^1 \right)\cap {\mc F}^{-1} \left(\left[ {\mc F}( \widetilde{z}^- ), {\mc F}(  \widetilde{z}^+ ) \right] \right)$$
has a nonzero distance from $\gamma_1$, we see that
$L_\infty( \nu'',  [t_{\nu''}^-, t_{\nu''}^+] )$
is disconnected, which contradicts  Lemma \ref{lemma36}. Hence we have $L_\infty( \nu', I_{i, \nu'})= \gamma_i^1$.

\item In summary, there exists a slow orbit $\gamma_i$ from $y_{i, +}$ to $y_{i+1, -}$, and a subsequence (still indexed by $\nu$), such that $L_\infty( \nu, I_{i, \nu}) = \gamma_i$. By the connectedness of $L_\infty( \nu, I_{i, \nu})$, this implies that $\wt{p}_\nu( I_{i, \nu})$ converges to $\gamma_i$ in Hausdorff topology. Conclusion (II) of the
proposition follows from the fact that ${\mc F}$ is decreasing along $\gamma_0$ and $\gamma_n$, and $p$ is attracting and $q$ is repelling in this case.
\end{enumerate}
\end{proof}

This completes the proof of the compactness theorem.

\subsection{Gluing}

In this subsection we prove the following theorem.

\begin{thm}\label{thm_glue}
Suppose $p \in {\rm Crit}_k ({\mc F})$, $q \in {\rm Crit}_{k-1} ({\mc F})$. Then, there exists $\epsilon_0>0$, such that for all $\lambda \in (0, \epsilon_0]$, there exists a bijection
\begin{align}
\Phi^\lambda: {\mc N}^0(p, q) \to {\mc M}^\lambda(p, q) 
\end{align}
such that for any ${\ms X} \in \wt{\mc N}^0(p, q)$, there exist representatives $\wt{p}^\lambda\in\wt{\mc M}^\lambda(p, q)$ of $\Phi^\lambda([{\ms X}])$ such that as $\lambda \to 0$, $\{ \wt{p}^\lambda\}$ converges to ${\ms X}$ in the sense of Definition \ref{defn_convergence}.
\end{thm}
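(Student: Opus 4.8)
The plan is to prove the gluing theorem by localizing to each piece of a fast-slow orbit ${\ms X} = (p_0, \sigma_1, \ldots, \sigma_n, p_n)$ and then patching the pieces together with a Newton-iteration/implicit function theorem argument in the spirit of the $\lambda\to\infty$ gluing (Theorem \ref{thm36}), but now with the roles of fast and slow reversed. The key structural input is Proposition \ref{prop:singsolprop}: since ${\rm index}(p,{\mc F}) - {\rm index}(q,{\mc F}) = 1$, every slow piece is regular (hence transversally cut out by the hyperbolic slow flow, using Proposition \ref{prop:sloweq} and Proposition \ref{prop:singpar2} at the singular points), and every fast piece is a handle-slide or cusp orbit (hence transversally cut out within $M\times\{\eta\}$ by (A10), (A11), (A13)), except for the possible ``exit'' fast orbit $\sigma_1$ off a ${\rm Crit}^+$ point and the ``entrance'' fast orbit $\sigma_n$ into a ${\rm Crit}^-$ point. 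First I would set up, for each small $\lambda>0$, an approximate solution $\wt{p}^\lambda_{app}$ obtained by concatenating: the true fast solutions $\wt{y}_i$ (suitably time-translated so that their asymptotic endpoints on ${\mc C}_{\mc F}$ are matched), and true solutions of the slow equation $\eta' = -\lambda^2\mu(x(\eta))$ along the relevant branches of ${\mc C}_{\mc F}\setminus{\mc C}_{\mc F}^{sing}$, with transition regions near each $p_i$ handled by the exchange lemma.

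The heart of the argument is the passage through the intermediate equilibria $p_i$, and this is where the exchange lemmas reviewed in Appendix \ref{app:gspt} enter. Near a point $p_i \in {\mc C}_{\mc F}\setminus {\mc C}_{\mc F}^{sing}$ the slow manifold is normally hyperbolic and the classical exchange lemma of \cite{Jones_GSPT} applies: an orbit entering a neighborhood of $p_i$ along (a small perturbation of) $W^u(p_i)$ and exiting along $W^s(p_i)$ spends a long slow time near ${\mc C}_{\mc F}$ and its exit point is $C^1$-close to the slow orbit through $p_i$, with exponentially small error. Near a point $p_i \in {\mc C}_{\mc F}^{sing}$ normal hyperbolicity fails — one transverse eigenvalue of ${\rm Hess} f_{\eta_{p_i}}(x_{p_i})$ vanishes — and here I would invoke the generalized exchange lemma of \cite{Schecter_II} together with its small extension mentioned in the introduction, using the normal form (\ref{cmdex})--(\ref{cmdeeta}) on the two-dimensional center manifold (the fold/turning-point structure $x_n' = c(\eta-\eta_p) + dx_n^2 + \ldots$); the cusp orbits (solutions approaching $p_i$ like $1/t$ rather than exponentially) are precisely the orbits that track the slow manifold through the loss of hyperbolicity, and (A13) guarantees the transversality needed for the exchange lemma's hypotheses. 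For the exit orbit $\sigma_1$ off $p_- \in {\rm Crit}_k^+({\mc F})$ (and dually $\sigma_n$ into $p_+$), I would use the structure of $p_-$ as a hyperbolic equilibrium of the full $\lambda>0$ system (Proposition \ref{twoindexprop2}) whose unstable manifold, for small $\lambda$, limits onto $W^u(p_-)$ for the fast flow union a short slow segment; this is again an exchange-lemma estimate at a hyperbolic rest point.

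Once the local estimates are in hand, I would glue: introduce gluing parameters (the ``lengths'' of the slow segments, which are determined by the requirement that the concatenation runs from $p_-$ to $p_+$; here the hyperbolicity of the slow equilibria and the fact that handle-slides/cusps occur at distinct $\eta$-values, (A12), pin down the parameters), write the true equation as $\wt{S}^\lambda(\wt{p}^\lambda_{app} + \xi) = 0$, and solve for $\xi$ by the implicit function theorem. The linearized operator at the approximate solution is, by the transversality established above for each piece and a standard linear-gluing estimate, uniformly invertible on the complement of the gluing directions (with norms that may blow up in $\lambda$, but controllably — one must choose $\lambda$-weighted Sobolev norms as in (\ref{eqn48})--(\ref{eqn49}), now weighted to reflect the slow/fast splitting), and $\|\wt{S}^\lambda(\wt{p}^\lambda_{app})\|$ is small (exponentially small from the exchange-lemma tails plus an ${\mc O}(\lambda)$ or ${\mc O}(\lambda^{2})$ term from the approximation of the slow flow). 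This produces the map $\Phi^\lambda$ and the convergence statement in the sense of Definition \ref{defn_convergence}; injectivity and surjectivity of $\Phi^\lambda$ follow from the uniqueness clause of the implicit function theorem combined with the compactness theorem (Theorem \ref{compactnessthm}), exactly as surjectivity was deduced in the proof of Theorem \ref{thm36}. The main obstacle — and the reason this is harder than the $\lambda\to\infty$ case — is the passage through ${\mc C}_{\mc F}^{sing}$: one cannot use the classical exchange lemma there, and one must verify carefully that the extension of the Schecter exchange lemma applies to the specific loss-of-hyperbolicity (a simple fold in one transverse direction, with the remaining transverse directions uniformly hyperbolic) occurring here, and that the index bookkeeping of Proposition \ref{prop:singsolprop} matches the dimension counts coming out of the exchange lemma so that the glued family is exactly a zero-dimensional (after quotient by ${\mb R}$) transverse moduli space.
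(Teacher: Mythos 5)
Your proposal is structurally quite different from the paper's proof, and the difference matters. The paper's proof of Theorem \ref{thm_glue} is a pure geometric singular perturbation argument: it tracks the unstable manifold $W^u(p,\lambda)$ step by step along the fast-slow orbit (Steps~1--9), using transversality plus the classical exchange lemma near normally hyperbolic pieces and the general exchange lemma plus the fold analysis of \cite{Fold} through points of ${\mc C}_{\mc F}^{sing}$, until the tracked manifold meets $W^s(q,\lambda)$ transversally in a one-dimensional intersection. That transversal intersection \emph{is} the glued solution; there is no approximate solution, no weighted Sobolev space, and no Newton iteration. Uniqueness and surjectivity are then general exchange-lemma facts combined with the compactness theorem.

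Your proposal instead tries to run a Floer-style functional-analytic gluing: build a concatenated approximate solution, choose $\lambda$-weighted Sobolev norms ``reflecting the slow/fast splitting,'' and solve $\wt{S}^\lambda(\wt{p}^\lambda_{app} + \xi)=0$ by IFT. You correctly identify the structural inputs — Proposition \ref{prop:singsolprop} to classify pieces, (A10)--(A13) for transversality, compactness for surjectivity — and you correctly flag the fold as the hard case. But the way you use the exchange lemma is inconsistent: you invoke it to get local estimates near each $p_i$, yet the exchange lemma's \emph{conclusion} is already a statement about how $W^u(p,\lambda)$ fits together as a $C^r$ manifold on the other side of $N_\epsilon$ — if you have that, you don't need an IFT at all. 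If, on the other hand, you want to do the IFT honestly and independently, the crucial uniform-right-inverse estimate near the fold is not ``standard'' and is not carried out: there the transverse linearization degenerates, the decay of solutions to ${\mc C}_{\mc F}^{sing}$ is polynomial ($\sim 1/t$) rather than exponential, and the slow segments have length ${\mc O}(\lambda^{-2})$, all of which break the exponential-weight Sobolev framework you are importing from the $\lambda\to\infty$ case. The assertion that the linearized operator is ``uniformly invertible ... with norms that blow up controllably'' is exactly the missing content; the authors chose the GSPT route precisely to avoid having to prove it. There is also a smaller conceptual slip: cusp orbits are fast orbits lying in a fixed slice $M\times\{\eta_{p_i}\}$, not the orbits that ``track the slow manifold through the loss of hyperbolicity''; tracking through the fold is done by the center-manifold dynamics, and the cusp orbit is what the tracked orbit exits along.

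In short: your route is genuinely different and in principle interesting, but as written it has a real gap at the fold point, and it mixes two incompatible strategies (if the exchange lemma estimates hold, the IFT is redundant; if you discard them, the IFT is unproven). You would either have to commit to the pure GSPT tracking as the paper does, or develop a genuinely new weighted functional-analytic framework tailored to the polynomial decay at ${\mc C}_{\mc F}^{sing}$.
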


\begin{proof}  Let $p \in {\rm Crit}_k ({\mc F})$, $q \in {\rm Crit}_{k-1} ({\mc F})$.  Theorem \ref{compactnessthm} implies that for $\lambda>0$ small, any orbit in ${\mc M}^\lambda(p, q)$ is close to a fast-slow orbit in ${\mc N}^0(p, q)$.  We must show that for any fast-slow orbit from $p$ to $q$, if $\lambda>0$ is small, then there exists a unique orbit from $p$ to $q$ that lies near the given fast-slow orbit.

The proof consists of two parts. In the first part, we construct the gluing map $\Phi^\lambda$, by using the exchange lemma (Lemma \ref{exlem}) and general exchange lemma (Lemma \ref{genexlem}). In the second part, we show that $\Phi^\lambda$ is a bijection.

We remark that the exchange lemma and general exchange lemma are for $C^r$ differential equations, and some regularity is lost in the derivative of the constructed solution with respect to $\epsilon$.  In our situation the differential equations are of class $C^\infty$, so of course each solution is of class $C^\infty$.

Now we start to construct the gluing map. We will frequently use Proposition \ref{prop:singsolprop} without citing it, and we will use the notation $W^u(p,\lambda)$, etc., from Subsection \ref{sec:msw}.

Let ${\ms X} = \left(p_0,\sigma_1,p_1,\ldots,p_{n-1},\sigma_n,p_n\right)$ be a fast-slow solution from $p$ to $q$.  For small $\lambda>0$, we will follow $W^u(p,\lambda)$ around the fast-slow solution until it meets $W^s(q,\lambda)$ transversally.  Any orbit in ${\mc M}^\lambda(p, q)$ that is near the given fast-slow solution must be in the portion of $W^u(p,\lambda)$ that we follow; uniqueness is a consequence of the transversality at the end of the proof (Step 8).

We have $p\in {\rm Crit}_k^+ ({\mc F}) \cup {\rm Crit}_k^-( {\mc F})$ and  $q\in {\rm Crit}_{k-1}^+ ({\mc F}) \cup {\rm Crit}_{k-1}^-( {\mc F})$.

{\em Step 1.} Suppose $p=(x_p,\eta_p)\in {\rm Crit}_k^+ ({\mc F})$.   Then ${\rm index}\, (x_p, f_{\eta_p})=k$, and $\sigma_1$ is a fast solution from $p$ to $p_1=(x_{p_1},\eta_{p_1})\in{\mc C}_{\mc F} \setminus  {\mc C}_{\mc F} ^{sing}$, with ${\rm index}\, (x_{p_1}, f_{\eta_{p_1}})=k-1$.  By (A13), $W^u(p,0)$ meets $W^s(p_1,0)$ transversally within $M\times\{\eta_p\}$ along $\sigma_1$.    Then $W^u(p,0)$ meets $W^s(\sigma_2,0)$ transversally in $M\times {\mb R}$ along $\sigma_1$.  (More precisely, $\sigma_2$ should be replaced by a compact portion of the complete orbit corresponding to $\sigma_2$; we shall use this sort of abuse of notation throughout this section.)
The dimension of the intersection is ${\rm dim }\;W^u(p,0) + {\rm dim }\;W^s(\sigma_2,0) -(n+1)=k+(n
-(k-1)+1)-(n+1)=1$.  

Thus $\sigma_1$ is isolated in the intersection.  The next orbit 
$\sigma_2$ is slow, and we have (i) $p_2 \in  {\mc C}_{\mc F} \setminus  {\mc C}_{\mc F}^{sing}$, or (ii) $p_2 \in {\mc C}_{\mc F}^{sing}$.  In both cases, ${\rm index}\, (x_{p_2}, f_{\eta_{p_2}})=k-1$.

{\em Step 2.} After Step 1, in case (i), 
the exchange lemma \cite{Jones_GSPT} implies that for $\lambda>0$ small, $W^u(p,\lambda)$, followed along the flow, becomes close to $W^u(\sigma_2,0)$ near $p_2$.  By {\em close} we mean close in the $C^s$-topology for some large $s$, which decreases in the course of the proof; see \ref{app:gspt}.  Notice that the dimension of $W^u(\sigma_2,0)$ is $(k-1)+1=k$ as it should be.  

The next orbit $\sigma_3$ is fast, and we have  (i) $p_3 \in  {\mc C}_{\mc F} \setminus  {\mc C}_{\mc F}^{sing}$, or (ii) $p_3 \in{\mc C}_{\mc F}^{sing}$.  In case (i), ${\rm index}\, (x_{p_3}, f_{\eta_{p_3}})=k-1$; in case (ii), ${\rm index}\, (x_{p_3}, f_{\eta_{p_3}})=k-2$.

{\em Step 3.} After Step 1, in case (ii), let $N_0$ denote the center manifold of (\ref{eqn15a})--(\ref{eqn15b}) for $\lambda=0$ at $p_2$, which has dimension 2.  The choice of center manifold is not unique; we may choose it to include the start of $\sigma_3$ (see \ref{sec:nonu}).   By the exchange lemma, for $\lambda>0$ small, $W^u(p,\lambda)$, followed along the flow, becomes close to $W^u(\sigma_2,0)$ where $\sigma_2$ enters $N_0$. Of course $W^u(\sigma_2,0)$ (dimension $k$) is transverse to $W^s(N_0,0)$ (dimension $(n-(k-1)-1)+2=n-k+2$); $n-(k-1)-1$ is the number of negative eigenvalues at $p_2$.  The dimension of the intersection is 1.  For small $\lambda>0$, $N_0$ perturbs to a normally hyperbolic invariant manifold $N_\lambda$, and $W^u(p,\lambda)$ is transverse to  $W^s(N_\lambda,\lambda)$.

The solution in $W^u(p,\lambda) \cap W^s(N_\lambda,\lambda)$ approaches a solution $p_2^\lambda(t)$ in $N_\lambda$ that is initially near $\sigma_2\cap N_0$.  The system restricted to $N_\lambda$, $\lambda\ge0$, has been analyzed in \cite{Fold}; see  \ref{app:gspt}.  The result is that $p_2^\lambda(t)$ leaves $N_\lambda$ close to $\sigma_3\cap N_0$.  Then the general exchange lemma implies that $W^u(p,\lambda)$ becomes close to the restriction of $W^u(N_0,0)$ to $\sigma_3\cap N_0$, i.e., $W^u(p_2,0)$, as $W^u(p,\lambda)$ exits a neighborhood of $N_\lambda$. $W^u(p_2,0)$ has dimension $(k-1)+1=k$ as it should.  The orbit $\sigma_3$ is fast,  $p_3 \in {\mc C}_{\mc F} \setminus  {\mc C}_{\mc F}^{sing}$, and ${\rm index}\, (x_{p_3}, f_{\eta_{p_3}})=k-1$.

{\em Step 4.} After Step 2, in case (i),  $W^u(p,\lambda)$ is close to $W^u(\sigma_2,0)$ near $p_2$, and by (A10),  $W^u(\sigma_2,0)$ is transverse to $W^s(\sigma_4,0)$ along $\sigma_3$.  Continuation from here is like continuation after Step 1, described in Steps 2--3.

{\em Step 5.} After Step 2, in case (ii), we note that  by (A13), $W^u(p_2,0)$ (dimension $k-1$) is transverse to $W^s(p_3,0)$ (dimension $n-(k-2)$) within $M\times\{\eta_{p_3}\}$ along $\sigma_3$.  Let $N_0$ denote the center manifold of (\ref{eqn15a})--(\ref{eqn15b}) for $\lambda=0$ at $p_3$, which has dimension 2; we choose it to include the end of $\sigma_3$ (see \ref{sec:nonu}).  Then $W^u(\sigma_2,0)$ (dimension $k$) is transverse to $W^s(N_0,0)$ (dimension $(n-(k-2)-1)+2=n-k+3$) along $\sigma_3$.  The intersection is 2-dimensional and consists of solutions that track an open set of solutions in $N_0$ around $\sigma_3$. 

{\em Step 6.} After Step 3, we note that by (A13), $W^u(p_2,0)$ (dimension $k$) meets $W^s(p_3,0)$ (dimension $n-(k-1)$) transversally within $M\times \{\eta_{p_3}\}$ along $\sigma_3$.    As in Step 1, $W^u(p_2,0)$ meets $W^s(\sigma_4,0)$ transversally in $M\times {\mb R}$ along $\sigma_3$, and $W^u(p,\lambda)$ is close to $W^u(p_2,0)$ near $\sigma_3$.  The next orbit $\sigma_4$ is a slow solution from $p_3$ to (i) $p_4 \in  {\mc C}_{\mc F} \setminus  {\mc C}_{\mc F}^{sing}$, or (ii) $p_4 \in {\mc C}_{\mc F}^{sing}$.   In both cases, ${\rm index}\, (x_{p_4}, f_{\eta_{p_4}})=k-1$.
Continuation from here is like continuation after Step 1, described in Steps 2--3.

{\em Step 7.}
After Step 5, for
 small $\lambda>0$, the tracked orbits include an open set $U_\lambda$ in $N_\lambda$, the perturbation of $N_0$, that lies above an open set $U_0$ in $N_0$ that contains  $\sigma_4\cap N_0$ in its interior. The general exchange lemma implies that $W^u(p,\lambda)$ is close to the restriction of $W^u(N_0,0)$ to $U_0$, i.e., to $W^u(\sigma_4\cap N_0)$, as it exits a neighborhood of $N_\lambda$.  Continuation from here is like continuation after Step 2, described in Steps 4--5.

{\em Step 8.}  Continuation proceeds using analogs of the steps previously described until $W^u(p)$ arrives near $p_{n-1}$.

If $q\in {\rm Crit}_k^+( {\mc F})$, then $\sigma_n$ is slow, and ${\rm index}\, (x_{p_{n-1}}, f_{\eta_{p_{n-1}}})={\rm index}\, (x_q, f_{\eta_q})=k-1$.  $W^u(p,\lambda)$ arrives  near $p_n$ along the slow orbit $\sigma_n$, close to $W^u(\sigma_n,0)$, which has dimension $k$.  $W^u(\sigma_n,0)$ is transverse to $W^s(\sigma_n,0)$, with dimension $n-(k-1)+1=n-k+2$.  Therefore $W^u(p,\lambda)$ is transverse to $W^s(q,\lambda)$, which is close to $W^s(\sigma_n,0)$. The intersection has dimension 1 and gives the solution from $p$ to $q$.

If $q\in {\rm Crit}_k^-( {\mc F})$, then $\sigma_n$ is a fast orbit from $p_{n-1} \in {\mc C}_{\mc F} \setminus  {\mc C}_{\mc F}^{sing}$ to $q \in {\mc C}_{\mc F} \setminus  {\mc C}_{\mc F}^{sing}$; ${\rm index}\, (x_{p_{n-1}}, f_{\eta_{p_{n-1}}})=k-1$ and ${\rm index}\, (x_q, f_{\eta_q})=k-2$.  By (A11), $W^u (p_{n-1},0)$ meets $W^s(q, 0)$ transversally in $M\times\{\eta_q\}$ along $\sigma_n$.  Therefore $W^u(\sigma_n,0)$ meets $W^s(q,0)$ transversally in $M\times{\mb R}$ along $\sigma_n$.  Since $W^u(p,\lambda)$ is close to $W^u(\sigma_n,0)$, $W^u(p,\lambda)$ meets $W^s(q,\lambda)$ transversally  in $M\times{\mb R}$ near $\sigma_n$.  The intersection has dimension one and gives the solution from $p$ to $q$.

{\em Step 9.} Suppose $p=(x_p,\eta_p)\in {\rm Crit}_k^-( {\mc F})$.   Then ${\rm index}\, (x_p, f_{\eta_p})=k-1$, and $\sigma_1$ is a slow solution from $p$ to $p_1=(x_{p_1},\eta_{p_1})$.  Either (i)  $p_1\in{\mc C}_{\mc F} \setminus  {\mc C}_{\mc F} ^{sing}$, or (ii) $p_1\in{\mc C}_{\mc F} ^{sing}$.  In either case, 
${\rm index}\, (x_{p_1}, f_{\eta_{p_1}})=k-1$.

In case (i),  $W^u(p,\lambda)$ is close to $W^u(\sigma_1,0)$.  Step 3 above describes how to continue from $p_1$.  Case (ii) is left to the reader.

Now we show that the gluing map just constructed is a bijection. We first note that there exist small $\delta >0$ and $\lambda_0>0$ such that for all $\lambda \in (0, \lambda_0)$, the orbit we constructed is the unique one in ${\mc M}^\lambda(p, q)$ such that its Hausdorff distance from the fast-slow orbit $[{\ms X}]$ is less than $\delta$.  This is a general fact about exchange lemma constructions; see for example \cite{Jones_Tin}, p. 1021.

Thus the gluing map is a bijection provided it is surjective. We claim that there exists $\lambda_0>0$ such that the gluing map is surjective for all $\lambda \in (0, \lambda_0)$. If not, then there exist a sequence $\lambda_i \to 0$ and a sequence of orbits $[\wt{x}_i] \in {\mc M}^{\lambda_i}(p, q)$ such that for each $i$, $[\wt{x}_i]$ is not in the image of $\Phi^{\lambda_i}$. Then, by the compactness theorem (Theorem \ref{compactnessthm}), without loss of generality, $[\wt{x}_i]$ converges to a fast-slow orbit $[{\ms Y}]$ in the sense of Definition \ref{defn_convergence}. In particular, this sequence converges in the Hausdorff topology. Then for large $i$, $\Phi^{\lambda_i}([{\ms Y}])$, which is the unique orbit in ${\mc M}^{\lambda_i}(p, q)$ in a small Hausdorff neighborhood of $[{\ms Y}]$, must be $[\wt{x}_i]$. This contradiction proves the claim. 
\end{proof}

\section{Acknowedgements}

The first author's work was partially supported by NSF under award DMS-1211707. 
The second author would like first to thank his advisor Professor Gang Tian, for his help and encouragement, guidance and corrections. He also would like to thank Urs Frauenfelder for many helpful discussions, to thank Robert Lipshitz and Dietmar Salamon for their interest in this work, to thank Hongbin Sun for answering topology questions. At the early stage of this project, the second author was funded by Professor Helmut Hofer for the Extramural Summer Research Project of Princeton Univeristy in Summer 2011, and he would like to express his gratitude for Professor Hofer's generosity.

\appendix
\section{Exchange lemmas\label{app:gspt}}


Let $Z$ be a manifold, let $\dot p=f(p,\epsilon)=f_\epsilon(p)$ be a differential equation on $Z$ with parameter $\epsilon$ (i.e., each $f_\epsilon$ is a section of $TZ$), let $\phi_t^\epsilon$ be the flow, and let $N_0$ be a compact submanifold of $Z$ that is invariant under the the flow of $\dot p=f_0(p)$.  $N_0$ is called {\em normally hyperbolic} if there is a splitting of $TZ|N_0$, $TZ|N_0= S \oplus U \oplus TN_0$, such that under $D\phi_t^0$, all vectors in $S$ shrink at a faster exponential rate than any vector in $TN_0$, and under $D\phi_{-t}$, all vectors in $U$ shrink at a faster exponential rate than any vector in $TN_0$.  (There are less restrictive definitions, but this one suffices for our purposes.)

Normally hyperbolic invariant manifolds have stable and unstable manifolds with flow-preserved fibrations, and the whole structure persists under perturbation. This structure is most easily described in local coordinates. 

Let us assume that ${\rm dim} \, N_0=m$, ${\rm dim} \, Z=n+m$, and fibers of $S$ (respectively $U$) have dimension $k$ (respectively $l$), with $k+l=n$. Near a point of $N_0$ one can choose coordinates $p=\Phi(x,y,z,\epsilon)$, $(x,y,z,\epsilon)\in\Omega_1\times\Omega_2\times(-\epsilon_0,\epsilon_0)$, $\Omega_1$ open subset of ${\mb R}^k \times {\mb R}^l$, $\Omega_2$ an open subset of ${\mb R}^m$, such that, for small $\epsilon$, $\dot p=f(p,\epsilon)$ becomes
\begin{align}
\dot x &= A(x,y,z,\epsilon)x, \label{nh1}  \\
\dot y &= B(x,y,z,\epsilon)y,  \label{nh2}   \\
\dot z &= h(z,\epsilon) + x^TC(x,y,z,\epsilon)y; \label{nh3}  
\end{align}
the matrices $A$, $B$, and $C$ are $k\times k$,  $l\times l$, and $k\times l$ respectively.  

We list some facts and terminology.
\begin{enumerate}
\item  If $\dot p=f(p,\epsilon)$ is $C^{r+3}$, the coordinate change can be chosen so that the new system is $C^{r+1}$.
\item For each $\epsilon$, the subspaces $y=0$, $x=0$, and their intersection are invariant.  For fixed $\epsilon$, the set $\{(x,y,z) \ | \  x=0 \mbox{ and } y=0\}$ (dimension $m$) corresponds to part of a normally hyperbolic invariant manifold $N_\epsilon$; the set $y=0$ (dimension $m+k$) corresponds to part of the stable manifold of $N_\epsilon$, $W^s(N_\epsilon)$; and the set $x=0$ (dimension $m+l$) corresponds to part of the unstable manifold of $N_\epsilon$, $W^u(N_\epsilon)$.
\item If $(x(t),0,z(t))$ is a solution in $W^s(N_\epsilon)$, then $(0,0,z(t))$ is a solution in $N_\epsilon$; and if $(0,y(t),z(t))$ is a solution in $W^u(N_\epsilon)$, then $(0,0,z(t))$ is again a solution in $N_\epsilon$.  Each solution in $W^s(N_\epsilon)$ (respectively $W^u(N_\epsilon)$) approaches exponentially a solution in $N_\epsilon$ as time increases (respectively decreases). 
\item Given  a point $p=(0,0,z_0)$ in $N_\epsilon$, the {\em stable} (respectively {\em unstable}) {\em fiber} of $p$ is the set of all points $(x,0,z_0)$ (dimension $k$) (respectively $(0,y,z_0)$ (dimension $l$)).  For each $t$, the time-$t$ map of the flow takes fibers to fibers; in this sense the fibration is flow-invariant. Solutions that start in the stable (respectively) unstable fiber of $p$ approach the solution that starts at $p$ exponentially at $t$ increases (respectively decreases).
\item Given an $P_\epsilon \subset N_\epsilon$, we shall refer to the union of the stable (respectively unstable) fibers of points in $P_\epsilon$ as $W^s(N_\epsilon)$ (respectively $W^u(N_\epsilon))$ {\em restricted to} $P_\epsilon$. If $P_\epsilon$ is invariant, $W^s(N_\epsilon)$ restricted to $P_\epsilon$, for example, may be smaller than  $W^s(P_\epsilon)$, since the latter may include solutions in $N_\epsilon$ that approach $P_\epsilon$ at a slower exponential rate, together with points in their stable fibers.
\end{enumerate}


Suppose $N_0$ is a compact manifold with boundary of equilibria of dimension $m$, and each equilibrium in $N_0$  has $k$ eigenvalues with negative real part and $l$ eigenvalues with positive real part.  $N_0$ does not fit the definition of a normally hyperbolic invariant manifold that we have given, but a coordinate system as above still exists, except that $\Omega_2$ may have boundary and $N_\epsilon$ may be only locally invariant (i.e., solutions may leave through the boundary).  We shall abuse terminology and refer to the $N_\epsilon$ as normally hyperbolic invariant manifolds in this situation.

Suppose we wish to follow an $(l+1)$-dimensional manifold of solutions $M_\epsilon$ as it passes near a normally hyperbolic invariant manifold $N_\epsilon$ in a manifold $Z$, where $N_0$ is a normally hyperbolic invariant manifold of equilibria.  We choose coordinates so that the system is (\ref{nh1})--(\ref{nh3}), with $h(z,\epsilon)=\epsilon\tilde h(z,\epsilon)$.  We assume that near the point $(x^*,0,z^*)$, the $M_\epsilon$, $\epsilon\ge0$, fit together to form a $C^{r+1}$ manifold with boundary in $Z\times{\mb R}$; and we assume that
$M_0$ meets $W^s(N_0)$ transversally in the solution through  $(x^*,0,z^*)$, 
which approaches the equilibrium $(0,0,z^*)$ as $t\to\infty$. The system restricted to $N_\epsilon$ has the form $\dot z = \epsilon \tilde h(z,\epsilon)$; we assume $\tilde h(z^*,0)\neq 0$.  Let $\psi_t$ denote the flow of $\dot z = \tilde h(z,0)$, and let $z^\dagger=\psi_T(z^*)$ for some $T>0$.  Let $I=\{(\psi_t(z^*) : T-\delta<t<T+\delta\}$, a small interval around $z^\dagger$ in the orbit of $z^*$ for $\dot z = \tilde h(z,0)$.  Let $J=\{0\}\times\{0\}\times I$.  Let $V$ be a small open set around $(0,0,z^\dagger)$ in $W^u(N_0)$ restricted to $J$,
which has dimension $l+1$. Then we have:

\begin{thm}[Exchange Lemma]\label{exlem}
For small $\epsilon>0$, parts of $M_\epsilon$ fit together with $V$ to form a $C^r$ manifold in $Z\times{\mb R}$.
\end{thm}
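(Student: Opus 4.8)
The plan is to argue in the Fenichel coordinates $(x,y,z)$ of (\ref{nh1})--(\ref{nh3}) and to combine a tracking estimate for solutions with a variational estimate for tangent planes. Fix a box $B_\delta=\{|x|\le\delta,\ |y|\le\delta,\ z\in K\}$ around the relevant part of $N_0$, with $\delta$ small enough that, in an adapted norm, ${\rm Re}\,{\rm spec}\,A\le-\alpha<0<\alpha\le{\rm Re}\,{\rm spec}\,B$ uniformly on $B_\delta$; shrink all neighborhoods so the estimates below are uniform, including for $\epsilon\in[0,\epsilon_0)$. Since $h(z,\epsilon)=\epsilon\tilde h(z,\epsilon)$, the motion in $z$ is slow once $x$ and $y$ are small.

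First I would set up the tracking. Transversality of $M_0$ to $W^s(N_0)=\{y=0\}$ along the base solution through $(x^*,0,z^*)$ lets me parametrize the relevant piece of $M_\epsilon$ at its entry into $B_\delta$ by $(\eta,\tau)\in D^l\times I_{\rm ent}$, with $\eta$ labelling the $y$-like transverse directions: at entry the point with parameters $(\eta,\tau)$ has $z$ near $z^*$, $|x|=O(\delta)$, and $y=\delta_0\eta+o(\delta_0)$ for a small fixed $\delta_0$. Flowing forward, $\dot x=A(\cdot)x$ forces $|x(t)|\le Ce^{-\alpha t/2}\delta$, $\dot y=B(\cdot)y$ makes $|y(t)|$ grow, and once $|x|,|y|$ are small $\dot z=\epsilon\tilde h+x^TCy$ gives $z(t)=\psi_{\epsilon t}(z^*)+O(\epsilon)$, where $\psi$ is the flow of $\dot z=\tilde h(z,0)$. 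Using $\tilde h(z^*,0)\neq0$ and choosing first $\delta_0$ and then $\epsilon$ small, each tracked solution stays in $B_\delta$ for a time of order $1/\epsilon$, during which $z$ sweeps past $z^\dagger=\psi_T(z^*)$; an implicit-function argument in $(\eta,\tau)$ then shows that the tracked part of $M_\epsilon$ inside a fixed neighborhood $W$ of $V$ is an $(l+1)$-dimensional graph over $W^u(N_\epsilon)$ restricted to $J$.

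The heart of the proof is the tangent-plane estimate. Along each tracked solution I would write the variational equation and represent $T_pM_\epsilon$, while $p\in W$, as the graph of a linear map $\Xi$ from an $(l+1)$-plane $P\subset(y,z)$-space into the $x$-directions; at entry $P$ is an arbitrary small perturbation of the $y$-space and $\Xi$ is $O(1)$. The $l$ directions of $T_pM_\epsilon$ coming from the $\eta$-variation are stretched by the expanding $y$-dynamics and so become dominated by their $y$-components, while the direction coming from the $\tau$-variation is the flow vector, which near $N_\epsilon$ is essentially $\epsilon\tilde h$ in the $z$-component; hence, using ${\rm Re}\,{\rm spec}\,A\le-\alpha$, ${\rm Re}\,{\rm spec}\,B\ge\alpha$, $\dot z=O(\epsilon)$ and the passage time of order $1/\epsilon$, Gronwall estimates give $\|\Xi\|\le Ce^{-\beta/\epsilon}$ for some $\beta>0$ and force the $z$-projection of $P$ to collapse onto ${\mb R}\,\tilde h(z^\dagger,0)$. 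Thus $T_pM_\epsilon$ is $C^0$-close, throughout $W$, to $T\bigl(W^u(N_0)|_J\bigr)$. To upgrade $C^0$ to $C^r$ I would run the same spectral-gap estimates on the equations satisfied by the order-$\le r$ jets of the graph of $M_\epsilon$ over $W^u(N_\epsilon)|_J$: these jets obey equations with the same hyperbolic structure, hence the same exponential decay, at the price of the derivative loss already allowed for by the hypothesis that (\ref{nh1})--(\ref{nh3}) is only $C^{r+1}$.

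For the gluing statement I would adjoin $\dot\epsilon=0$ and run everything for the extended flow on $Z\times{\mb R}$, so that $\bigcup_{\epsilon\ge0}\bigl((\text{tracked part of }M_\epsilon\text{ in }W)\times\{\epsilon\}\bigr)$ is the graph over $\bigl(W^u(N_0)|_J\cap W\bigr)\times[0,\epsilon_0)$ of the functions estimated above; by those estimates the graphing functions extend $C^r$ to $\epsilon=0$ with value the inclusion of $V$, which is exactly the claim. The main obstacle is this last step together with the $C^r$ (rather than merely $C^0$) tangent-plane control: one must carry the variational and jet estimates with constants uniform over the unbounded passage time $t\in[0,O(1/\epsilon)]$ and all the way down to $\epsilon=0$, and check that the blow-up of the passage time as $\epsilon\to0^+$ is compatible with the fixed limit object $V$. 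This is the point at which the statement mildly extends --- and the method follows --- the exchange lemmas of \cite{Jones_GSPT} and \cite{Schecter_II}.
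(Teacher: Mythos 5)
The paper does not prove Theorem~\ref{exlem}: Appendix~\ref{app:gspt} is a review, the result is cited from \cite{Jones_GSPT} for the classical exchange lemma and \cite{Schecter_II} for the general one, and the paper only remarks that the small generalization used here has essentially the same proof. So there is no in-paper argument to compare against, and your sketch must be assessed on its own terms.

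Your route --- Fenichel normal form (\ref{nh1})--(\ref{nh3}), entry/exit tracking in a box, a Riccati-type graph estimate for tangent planes, jet estimates for the $C^r$ claim, and $\dot\epsilon=0$ adjoined for the fitting-together statement --- is the standard strategy used in the cited literature, so the overall plan is sound. Two places are too quick, and they are exactly where the real work lives. First, isolating the slow direction: after passage time $O(1/\epsilon)$, all $l$ of your $\eta$-variations \emph{and} the flow vector itself are dominated by their $y$-components, so it is not obvious how the one-dimensional $\tilde h(z^\dagger,0)$ direction survives. It does, but only because the residual $z$-components of the normalized $\eta$-variations decay like $e^{-c/\epsilon}$ whereas the flow vector's $z$-component is $O(\epsilon)$, and the published proofs encode this scale separation with differential forms (Jones--Tin) or Fenichel fiber coordinates rather than a bare Gronwall bound on $\Xi$; ``Gronwall estimates \ldots force the $z$-projection of $P$ to collapse'' hides that calculation. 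Second, the $C^r$ dependence in $\epsilon$ down to $\epsilon=0$: the $r$-jet equations involve shifted spectra, and $\epsilon$-derivatives of the tracked solutions pick up factors of the passage time $T=O(1/\epsilon)$, which is precisely the difficulty addressed in \cite{Schecter_II} and the reason the statement loses one degree of regularity relative to the vector field. You flag both points honestly in your closing paragraph; they are genuine, and a complete proof would have to resolve them along the lines of those references rather than by the blanket Gronwall/jet assertion.
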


\begin{figure}[htbp]
\centering
\includegraphics[scale=1.0]{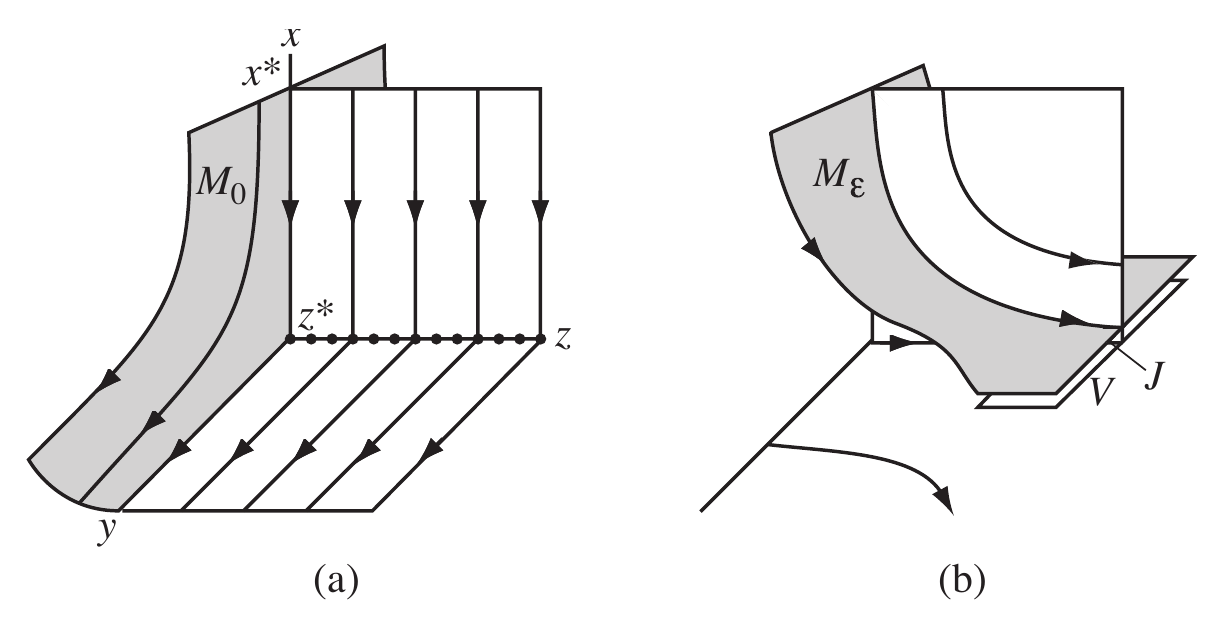}
\caption{The exchange lemma with $k=l=m=1$: (a) $\epsilon=0$, (b) $\epsilon>0$.}
\label{fig:exch}
\end{figure}

See Figure \ref{fig:exch}.  Notice that for small $\epsilon>0$, $M_\epsilon$ meets $W^s(N_\epsilon)$ transversally in a solution that tracks a solution in $N_\epsilon$ that starts at a point $(0,0,z(\epsilon))$, with $z(\epsilon)$ near $z^*$.  For $\epsilon>0$, the orbit of $\dot z = \epsilon \tilde h(z_\epsilon)$ through $z_\epsilon$ equals the orbit of $\dot z = \tilde h(z,\epsilon)$ through $z(\epsilon)$.  This orbit is close to the orbit of $\dot z =\tilde  h(z,0)$ through $z^*$.  Thus we find that portions of the orbits tracked in $N_\epsilon$ for $\epsilon>0$ limit on a curve $J$  in $N_0$ that is not an orbit of the system (\ref{nh1})--(\ref{nh3})  on $N_0$ (these orbits are just points).  Nevertheless the $M_\epsilon$ for $\epsilon>0$ become close to $W^u(N_0)$ restricted to $J$.  

Now suppose we wish to track an $(l+s+1)$-dimensional manifold of solutions $M_\epsilon$, with $0\le s\le m-1$, as it passes near a normally hyperbolic invariant manifold $N_\epsilon$ in a manifold $Z$, where $N_0$ is a normally hyperbolic invariant manifold but does not consist of equilibria. (This occurs, for example, Step 3 of the proof of Theorem \ref{thm_glue}.) We again choose coordinates so that the system is (\ref{nh1})--(\ref{nh3}).  We choose a cross-section $\tilde M_\epsilon$ to the flow within $M_\epsilon$, of dimension $l+s$ (so that  $M_\epsilon$ is the union of orbits that start in $\tilde M_\epsilon$),    and we assume that $\tilde M_0$ meets $W^s(N_0)$ transversally at  $(x^*,0,z^*)$.  For small $\epsilon\ge0$, $\tilde M_\epsilon$ meets $W^s(N_\epsilon)$ in a manifold $Q_\epsilon$ of dimension $s$; we assume that $Q_\epsilon$ projects regularly along the stable fibration to a submanifold $P_\epsilon$ of $N_\epsilon$ of dimension $s$.  We also assume that for small $\epsilon\ge0$, the vector field $(0,0,h(x,\epsilon))$ is not tangent to $P_\epsilon$.  (Of course these assumptions follow from the corresponding ones at $\epsilon=0$.)  Finally, we assume that for $\epsilon>0$, following $P_\epsilon$ along the flow for time ${\mc O}(\frac{1}{\epsilon})$ produces a submanifold $P_\epsilon^*$ of dimension $s+1$ of $N_\epsilon$, and the manifolds $P_\epsilon^*$ fit together with a submanifold $P_0^*$ of $N_0$, of dimension $s+1$, to form a $C^{r+1}$ manifold in $Z\times{\mb R}$.  This assumption typically requires that some solution of the system restricted to $N_0$ that starts in $P_0$ approaches an equilibrium.
We emphasize that, analogous to the usual exchange lemma, $P_0^*$ is not the result of following $P_0$ along the flow for $\epsilon=0$. Let $V$ be a small open neighborhood of $P_0^*$ in $W^u(N_0)$ restricted to $P_0^*$, which has dimension $l+s+1$.

\begin{thm}[General Exchange Lemma]\label{genexlem}
For small $\epsilon>0$, parts of $M_\epsilon$ fit together with $V$ to form a $C^r$ manifold in $Z\times{\mb R}$.
\end{thm}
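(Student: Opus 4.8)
The plan is to follow the proof of the general exchange lemma of \cite{Schecter_II}, which establishes this conclusion when $N_0$ is a compact manifold of equilibria, and to carry out the modest modifications needed when $N_0$ is only assumed to be a compact normally hyperbolic invariant manifold. Throughout one works in the local coordinates $(x,y,z)$ of \eqref{nh1}--\eqref{nh3}, and uses the persistence of the invariant fibration structure described before Theorem \ref{exlem}: for small $\epsilon\ge0$ the manifold $N_\epsilon$ has stable manifold $W^s(N_\epsilon)=\{y=0\}$, fibered over $N_\epsilon$ by $k$-dimensional stable fibers, and unstable manifold $W^u(N_\epsilon)=\{x=0\}$, fibered by $l$-dimensional unstable fibers; the flow carries fibers to fibers, points in a fiber approach the orbit through the base point at an exponential rate strictly faster than any rate occurring within $N_\epsilon$, and all these rates are uniform in $\epsilon$ down to $\epsilon=0$ \cite{Jones_GSPT}.

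First I would record the entry and passage behavior. Since $\tilde M_0$ is transverse to $W^s(N_0)$ at $(x^*,0,z^*)$, for small $\epsilon\ge0$ the set $Q_\epsilon=\tilde M_\epsilon\cap W^s(N_\epsilon)$ is an $s$-dimensional manifold, whose projection along the stable fibration is the $s$-dimensional submanifold $P_\epsilon\subset N_\epsilon$. An orbit of \eqref{nh1}--\eqref{nh3} starting in $\tilde M_\epsilon$ near $Q_\epsilon$ has exponentially decaying $x$-component and, once inside a fixed small neighborhood of $N_\epsilon$, shadows in forward time the orbit of the restricted system $\dot z=h(z,\epsilon)+x^TCy$ through the corresponding point of $P_\epsilon$; this uses the asymptotic phase supplied by the stable fibration. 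By hypothesis, flowing $P_\epsilon$ for time $\mc O(1/\epsilon)$ sweeps out the $(s+1)$-dimensional submanifold $P_\epsilon^*$ of $N_\epsilon$, and the family $\{P_\epsilon^*\}_{\epsilon\ge0}$ is $C^{r+1}$ with limit $P_0^*\subset N_0$; note $\dim\bigl(W^u(N_\epsilon)|_{P_\epsilon^*}\bigr)=l+s+1=\dim M_\epsilon$.

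The analytic core is the exit estimate: upon leaving a fixed neighborhood of $N_\epsilon$, the tracked part of $M_\epsilon$ is $C^r$-close to $W^u(N_\epsilon)$ restricted to $P_\epsilon^*$. To prove it, write the relevant part of $M_\epsilon$ as the union of orbits through $\tilde M_\epsilon$, so $TM_\epsilon$ contains the flow direction, and express the remaining part of $TM_\epsilon$ as the graph of a linear map $\Sigma$ from the directions tangent to $W^u(N_\epsilon)|_{P_\epsilon^*}$ (spanned by the $y$-fiber directions and $TP_\epsilon^*$) into the $x$-directions. The variational equation turns the evolution of $\Sigma$ into a Riccati-type equation: its linear part is governed by $A$ and by $B$ together with the linearization of $\dot z=h(z,\epsilon)$ along $N_\epsilon$, and its inhomogeneous part is controlled by $x^TC$ and by the nonlinearities, hence is exponentially small once $x$ is small. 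Because normal hyperbolicity forces the contraction in $x$ (rate of $A$) to beat every rate realized within $N_\epsilon$, and the $y$-directions to expand, this equation is exponentially contracting toward $\Sigma=0$, so $\Sigma\to0$ along the passage. Differentiating the graph relation and applying the same Gronwall/spectral-gap argument inductively to the higher jets yields $C^r$ convergence; the loss from $C^{r+3}$ data, through $C^{r+1}$ straightening coordinates, to a $C^r$ conclusion is exactly as in \cite{Schecter_II} (and is harmless here, where everything is $C^\infty$). The exit point of each orbit depends in a $C^r$ manner on the orbit and on $\epsilon\ge0$, and at $\epsilon=0$ it lies on the unstable fibers over $P_0^*$; patching the pieces of $M_\epsilon$ for $\epsilon>0$ to the neighborhood $V$ of $P_0^*$ in $W^u(N_0)|_{P_0^*}$ then gives the asserted $C^r$ manifold in $Z\times\mb R$.

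The main obstacle, and the only place where one genuinely goes beyond \cite{Schecter_II}, is making the Riccati estimate uniform as $\epsilon\to0$ when $N_0$ is not a manifold of equilibria: there the restricted vector field $h(z,0)$ vanishes on $N_0$, so the shadowed base orbits barely move and the Gronwall constants are transparent, whereas here $h(z,0)$ may be nonzero and the base orbits traverse $P_\epsilon^*$ over a long time. What rescues the argument is precisely that normal hyperbolicity of $N_0$ supplies a uniform exponential dichotomy whose gap dominates the internal $z$-flow along the whole passage, so the coupling term $x^TC$ stays exponentially negligible relative to the transverse contraction and expansion; granting this, and the assumed $C^{r+1}$ regularity of $\{P_\epsilon^*\}$, the proof of \cite{Schecter_II} applies with only cosmetic changes. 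This is the small extension referred to in Subsection \ref{sec:out}. In the use of this lemma in Section \ref{section5}, $N_0$ is the two-dimensional center manifold near a point of ${\mc C}_{\mc F}^{sing}$ from Proposition \ref{prop:singpar2}, whose internal dynamics is the fold normal form of \cite{Fold}; that reference provides the manifolds $P_\epsilon^*$ and their $C^{r+1}$ dependence on $\epsilon$.
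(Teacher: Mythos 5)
The paper does not actually supply a proof of Theorem~\ref{genexlem}: after its statement the text says only that technical hypotheses have been omitted and that a small generalization of this and Theorem~\ref{exlem} is needed, whose proofs are ``essentially the same''; the result is imported from \cite{Schecter_II}. So there is no in-paper argument to compare yours against line by line. What you have written is a plausible roadmap of how such a proof goes, and it is consistent with the standard exchange-lemma machinery: work in the straightened coordinates \eqref{nh1}--\eqref{nh3}, use the flow-invariant stable/unstable fibrations of $N_\epsilon$ to get asymptotic phase, write the tangent planes of the tracked manifold as graphs over the unstable plus slow directions, and run a Riccati/Gronwall argument that uses the normal-hyperbolicity spectral gap to drive the graph slope to zero during the $\mc O(1/\epsilon)$ passage, then patch the exiting pieces onto $W^u(N_0)$ over $P_0^*$. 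All of that is in the right spirit, and identifying the Riccati estimate on the tangent planes as the analytic core is correct.

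Two points to flag. First, you locate ``the only place where one genuinely goes beyond \cite{Schecter_II}'' at the fact that $h(z,0)$ may be nonzero, i.e.\ that $N_0$ is not a manifold of equilibria. That is indeed the difference between Theorem~\ref{exlem} and Theorem~\ref{genexlem} as stated, but the extension the paper explicitly records as necessary is a different one: replacing the hypothesis that the $P_\epsilon^*$ (and $M_\epsilon$) ``fit together to form a $C^{r+1}$ manifold'' in $Z\times{\mb R}$ with the weaker hypothesis of $C^{r+1}$-convergence of $P_\epsilon^*\to P_0^*$ as $\epsilon\to0$, required because near a fold the dependence on $\epsilon$ has only fractional-power smoothness (the $\rho={\mc O}(\epsilon^{2/3})$ phenomenon illustrated with \eqref{cmnm1}--\eqref{cmnm2}). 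Your write-up mentions this only obliquely in the final sentence, whereas a proof of the lemma in the form actually used in Theorem~\ref{thm_glue} must carry the estimates through under a $C^{r+1}$-closeness hypothesis rather than a joint-smoothness hypothesis. Second, the central claim that ``the proof of \cite{Schecter_II} applies with only cosmetic changes'' is precisely what a proof of the theorem must establish, not assert; your sketch gives a heuristic (the spectral gap dominates the internal flow) but does not carry out the inductive estimates on the higher jets nor verify uniformity of the dichotomy constants over the $\mc O(1/\epsilon)$ passage when $h(z,0)\neq0$. So what you have is a reasonable outline with the load-bearing technical work deferred to the reference, which is roughly the posture of the paper itself.
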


Some technical hypotheses that are not relevant to the present paper have been omitted.  We actually need a small generalization of Theorems \ref{exlem} and \ref{genexlem}, whose proofs are essentially the same. 

We will use $C^r$-topology to measure submanifolds. If $S\subset X$ is a smooth submanifold, we say another $C^r$-submanifold $S'$(of the same dimension) is $C^r$-close to $S$, if with some smooth identification of the normal bundle of $S$ with a tubular neighborhood of $S$, $S'$ can be identified with a $C^r$-small section of the normal bundle.

In Theorem \ref{exlem}, replace the assumption that near the point $(x^*,0,z^*)$, the $M_\epsilon$, $\epsilon\ge0$, fit together to form a $C^{r+1}$ manifold with boundary, with the assumption that $M_\epsilon \to M_0$ in the $C^{r+1}$-topology.  The conclusion becomes that parts of $M_\epsilon$ converge to $V$ in the $C^r$-topology. 

In Theorem \ref{genexlem}, make the replacement in the assumptions just mentioned, and replace the assumption that the $P_\epsilon^*$, $\epsilon\ge0$, fit together to form a $C^{r+1}$ manifold with the assumption that $P_\epsilon^* \to P_0^*$ in the $C^{r+1}$-topology.  The conclusion again becomes that parts of $M_\epsilon$ converge to $V$ in the $C^r$-topology. 

These generalizations are required for the following reason.  Consider the fast-slow system 
\begin{align}
\dot z_1 &= -z_2+z_1^2,  \label{cmnm1}\\
\dot z_2 &= -\epsilon,  \label{cmnm2}
\end{align}
which is related to (\ref{cmdex})--(\ref{cmdeeta}).  See Figure \ref{fig:cm}.  For $\epsilon=0$, any compact portion $N_0$ of the curve of equilibria $z_1=-z_2^\frac{1}{2}$ is normally hyperbolic (in fact attracting).  It perturbs to the normally hyperbolic manifold $N_\epsilon$, on which system reduces to $\dot y=-\epsilon$.  For small $\epsilon>0$, a solution in or close to  $N_\epsilon$ arrives in the region $\delta<z_1<2\delta$ ($\delta>0$) along a curve given by $z_2=\rho(z_1,\epsilon)$, $\delta<z_1<2\delta$; $\rho={\mc O}(\epsilon^\frac{2}{3})$.   As $\epsilon \to 0$, this curve, which in examples with greater dimension may be $P_\epsilon^*$, approaches $z_2=0$, which in examples may be $P_0^*$, in the $C^s$-topology for any $s$, but does not fit together with $z_2=0$ to form a manifold with a high degree of differentiability in $z_1z_2\epsilon$-space.

\section{Choosing the center manifold\label{sec:nonu}}

Let us consider for concreteness a system in the form (\ref{nh1})--(\ref{nh3}), with $N_0$ two-dimensional, and the equation on $N_\epsilon$ given by (\ref{cmnm1})--(\ref{cmnm2}):
\begin{align}
\dot x &= A(x,y,z_1,z_2,\epsilon)x, \label{appb1}  \\
\dot y &= B(x,y,z_1,z_2,\epsilon)y,  \label{appb2}   \\
\dot z_1 &= -z_2+z_1^2+x^Tc_1(x,y,z_1,z_2,\epsilon)y,  \label{appb3}\\
\dot z_2 &= -\epsilon+x^Tc_2(x,y,z_1,z_2,\epsilon)y,  \label{appb4}
\end{align}
$(x,y) \in {\mb R}^k\times {\mb R}^l$. The system arises by center manifold reduction at the origin, so $A(0,0,z_1,z_2,0)$ has eigenvalues with negative real part ,and $B(0,0,z_1,z_2,0)$ has eigenvalues with positive real part.  We wish to follow a manifold $M_\epsilon$ of dimension $l+1$ as it passes $N_\epsilon$ (i.e., $z_1z_2$-space).  $M_0$ meets $W^s(N_0)$ transversally at a point $(x^*, 0.z_1^*,0)$ with $z_1^*<0$; hence the intersection includes  the semiorbit $\gamma$ that starts at  $(x^*,0, z_1^*,0)$, which approaches the origin as $t\to\infty$.  We wish to replace the center manifold $N_0$ for $\epsilon=0$ by one that contains $\gamma$.  To do this, replace all semiorbits in Figure \ref{fig:cm} that start at points $(0,0, z_1^*,z_2)$, $|z_2| <(z_1^*)^2$, with the semiorbits that start at $(x^*,0, z_1^*,z_2)$.  (The semiorbits that  start at  $(0,0, z_1^*,z_2)$ and at $(x^*, 0,z_1^*,z_2)$ both approach $(0, 0,-z_2^\frac{1}{2},z_2)$ as $t\to\infty$, and the second arrives tangent to $z_1z_2$-space.)
The result will be a new center manifold, in a smaller neighborhood of the origin, that contains $\gamma$.  The differentiability class of this manifold will decrease as we move away from the origin.  Now $N_0$ perturbs to new center manifold $N_\epsilon$ for $\epsilon>0$.

\begin{figure}[htbp]
\centering
\includegraphics[scale=0.8]{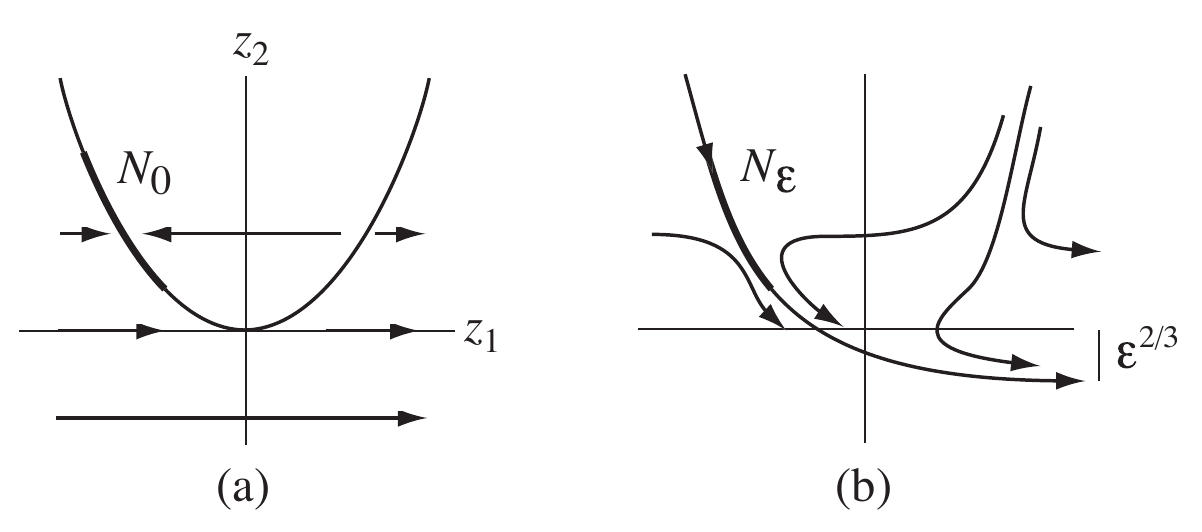}
\caption{Flow of (\ref{cmnm1})--(\ref{cmnm2}): (a) $\epsilon=0$, (b) $\epsilon>0$.}
\label{fig:cm}
\end{figure}

\bibliography{symplectic_ref}

\begin{thebibliography}{10}
\expandafter\ifx\csname url\endcsname\relax
  \def\url#1{\texttt{#1}}\fi
\expandafter\ifx\csname urlprefix\endcsname\relax\def\urlprefix{URL }\fi
\expandafter\ifx\csname href\endcsname\relax
  \def\href#1#2{#2} \def\path#1{#1}\fi

\bibitem{Jones_GSPT}
C.~Jones, Geometric singular perturbation theory, in: R.~Johnson (Ed.),
  Dynamical systems, Vol. 1609 of Lecture Notes in Mathematics, Springer, 1995.

\bibitem{Banyaga_Hurtubise}
A.~Banyaga, D.~Hurtubise, Cascades and perturbed {M}orse-{B}ott functions,
  ar{X}iv: 1110.4609 (2011).

\bibitem{Schecter_II}
S.~Schecter, Exchange lemmas. {II}. {G}eneral exchange lemma, Journal of
  {D}ifferential {E}quations 245~(2) (2008) 411--441.

\bibitem{Witten_Morse}
E.~Witten, Supersymmetry and {M}orse theory, Journal of {D}ifferential
  {G}eometry 17~(4) (1982) 661--692.

\bibitem{Gromov_1985}
M.~Gromov, Pseudoholomorphic curves in symplectic manifolds, Inventiones
  Mathematicae 82~(2) (1985) 307--347.

\bibitem{Mundet_thesis}
I.~{Mundet i Riera}, Teor\'ia de {Y}ang-{M}ills-{H}iggs para fibraciones
  simp\'ecticas, Ph.D. thesis, Universidad {A}ut\'onoma de {M}adrid (1999).

\bibitem{Cieliebak_Gaio_Salamon_2000}
K.~Cieliebak, A.~Gaio, D.~Salamon, ${J}$-holomorphic curves, moment maps, and
  invariants of {H}amiltonian group actions, International Mathematics Research
  Notices 16 (2000) 831--882.

\bibitem{Xu_VHF}
G.~Xu, Gauged {F}loer homology for {H}amiltonian isotopies {I}: definition of
  the {F}loer homology groups, arXiv: 1312.6923 (2013).

\bibitem{Gaio_Salamon_2005}
A.~Gaio, D.~Salamon, Gromov-{W}itten invariants of symplectic quotients and
  adiabatic limits, Journal of Symplectic Geometry 3~(1) (2005) 55--159.

\bibitem{Witten_Verlinde}
E.~Witten, The {V}erlinde algebra and the cohomology of the {G}rassmannian, in:
  Geometry, {T}opology and {P}hysics, International {P}ress, Cambridge, MA,
  357-422.

\bibitem{Verlinde_1988}
E.~Verlinde, Fusion rules and modular transformations in 2d conformal field
  theory, Nuclear Physics. B 300~(3) (1988) 360--376.

\bibitem{Gepner}
D.~Gepner, Fusion rings and geometry, Communications in Mathematical Physics
  141~(2) (1991) 381--411.

\bibitem{Vafa_92}
C.~Vafa, Topological mirrors and quantum rings, in: S.-T. Yau (Ed.), Essays On
  Mirror Manifolds, International {P}ress, 1992.

\bibitem{Intriligator}
K.~Intriligator, Fusion residues, Modern Physics Letter A 6 (1991) 3543--3556.

\bibitem{Agnihotri_thesis}
S.~Agnihotri, Quantum cohomology and the {V}erlinde algebra, Ph.D. thesis,
  Oxford University (1995).

\bibitem{Gonzalez_woodward_small}
E.~Gonz\'alez, C.~Woodward, Gauged {G}romov-{W}itten invariants for small
  spheres, to appear in {M}athematische {Z}eitschrift (2009).

\bibitem{Milnor_Morse}
J.~Milnor, Morse theory, no.~51 in Annals of {M}athematics {S}tudies, Princeton
  University Press, 1963.

\bibitem{Schwarz_book}
M.~Schwarz, Morse homology, Progress in Mathematics, Birkhauser, 1993.

\bibitem{Frauenfelder_vortex}
U.~Frauenfelder, Vortices on the cylinder, International Mathematics Research
  Notices 2006 (2006) 1--34.

\bibitem{kang_invariance}
J.~Kang, Invariance property of {M}orse homology on noncompact manifolds,
  arXiv:math/1012.5571 (2010).

\bibitem{Floer_intersection}
A.~Floer, Morse theory for {L}agrangian intersections, Journal of Differential
  Geometry 28 (1988) 513--547.

\bibitem{McDuff_Salamon_2004}
D.~McDuff, D.~Salamon, $J$-holomorphic curves and symplectic topology, Vol.~52
  of Colloqium {P}ublications, American Mathematical Society, 2004.

\bibitem{Morse_Bott_Homology}
A.~Banyaga, D.~Hurtubise, Morse-{B}ott homology, Transactions of the
  {A}merician {M}athematical {S}ociety 362~(8) (2010) 3997--4043.

\bibitem{Floer_unregularized}
A.~Floer, The unregularized gradient flow of the symplectic action,
  Communications on {P}ure and {A}pplied {M}athematics 41~(6) (1988) 775--813.

\bibitem{Frauenfelder_thesis}
U.~Frauenfelder, The {A}rnold-{G}ivental conjecture and moment {F}loer
  homology, International Mathematics Research Notices 2004~(42) (2004)
  2179--2269.

\bibitem{Fold}
M.~Krupa, P.~Szmolyan, Extending geometric singular perturbation theory to
  nonhyperbolic points-fold and canard points in two dimensions, SIAM Journal
  of Mathematical Analysis 33~(2) (2001) 286--314.

\bibitem{Jones_Tin}
C.~Jones, S.-K. Tin, Generalized exchange lemmas and orbits heteroclinic to
  invariant manifolds, Discrete Contin. Dyn. Syst. Ser. S 2~(4) (2009)
  967--1023.

\end{thebibliography}

\bibliographystyle{elsarticle-num}

\end{document}